\numberwithin{equation}{section}
\theoremstyle{definition}
\newtheorem*{remark*} {Remark}
\newtheorem*{definition*}{Definition}
\theoremstyle{plain}
\newtheorem*{theorem*}{Theorem}
\newtheorem{theorem}{Theorem}[section]
\newtheorem{lemma}[theorem]{Lemma}
\newtheorem{proposition}[theorem]{Proposition}
\newtheorem{example}[theorem]{Example}
\newtheorem{corollary}[theorem]{Corollary}
\newtheorem{definition}[theorem]{Definition}
\newtheorem*{theorem-m}{Theorem M}
\newtheorem{definition-main}{Definition}
\newtheorem{theorem-main}{Theorem}
\newtheorem{corollary-main}[theorem-main]{Corollary}
\newtheorem{proposition-main}[theorem-main]{Proposition}
\newtheorem{example-main}{Example}
\newcommand{\Z}{\mathbb{Z}}
\newcommand{\T}{\mathbb{T}}
\newcommand{\R}{\mathbb{R}}
\newcommand{\bz}{\mathbb{Z}}
\newcommand{\bt}{\mathbb{T}}
\newcommand{\br}{\mathbb{R}}
\newcommand{\norm}[1]{\left\lVert#1\right\rVert}
\newcommand{\1}{\mathds{1}}
\newcommand{\D}{\mathscr{D}}
\renewcommand{\geq}{\geqslant}
\renewcommand{\leq}{\leqslant}
\newcommand{\ft}[1]{\widehat #1}
\newcommand\dotprod[2]{\langle #1 , #2 \rangle}
\newcommand\mes{\operatorname{mes}}
\newcommand\inter{\operatorname{int}}
\newcommand{\sect}{Section~} 
\newcommand\eqdef{\overset{\text{def}}{=}}
\newcommand{\piped}{parallelepiped}
\newcommand\bmo{\operatorname{BMO}}
\def\Gam{\Gamma}
\def\lam{\lambda}
\def\Lam{\Lambda}
\newcommand{\va}{\ensuremath{\alpha}}
\newcommand{\ip}[2]{\langle #1, #2 \rangle}
\newenvironment{enumerate-math}
{\begin{enumerate}
\addtolength{\itemsep}{5pt}
}
{\end{enumerate}}
\newenvironment{enumerate-math-abc}
{\begin{enumerate}
\addtolength{\itemsep}{5pt}
}
{\end{enumerate}}
\begin{document}

\title{Riesz bases, Meyer's quasicrystals, and bounded remainder sets}
\author{Sigrid Grepstad}
\address{Institute of Financial Mathematics and Applied Number Theory, Johannes Kepler University Linz, Austria.}
\email{\texttt{sigrid.grepstad@jku.at}}
\author{Nir Lev}
\address{Department of Mathematics, Bar-Ilan University, Ramat-Gan 52900, Israel.}
\email{\texttt{levnir@math.biu.ac.il}}

\date{November 21, 2016}
\subjclass[2010]{42C15, 52C23, 11K38}
\keywords{Riesz basis, quasicrystal, cut-and-project set, bounded remainder set}
\thanks{S.G.\ is supported by the Austrian Science Fund (FWF), Project F5505-N26, which is a part of the Special Research Program ``Quasi-Monte Carlo Methods: Theory and Applications''}
\thanks{N.L.\ is partially supported by the Israel Science Foundation grant No. 225/13}

\begin{abstract}
We consider systems of exponentials with frequencies belonging to simple quasi\-crystals in $\br^d$. 
We ask if there exist domains $S$ in $\br^d$ which admit such a system as a Riesz basis for the space $L^2(S)$.
We prove that the answer depends on an arithmetical condition on the quasicrystal. The proof is based on the connection of the problem to the discrepancy of multi-dimensional irrational rotations, and specifically, to the theory of bounded remainder sets. In particular it is shown that any bounded remainder set admits a Riesz basis of exponentials. This extends to several dimensions (and to the non-periodic setting) the results obtained earlier in dimension one.
\end{abstract}

\maketitle


\section{Introduction}

\subsection{Riesz bases}

Let $S$ be a bounded, measurable set in $\br^d$, and $\Lambda$ be a discrete set in $\br^d$.
In this paper we are interested in the Riesz basis property of the system of exponential functions
\begin{equation*}
E(\Lambda) = \left\{ e^{2\pi i \ip{\lambda}{x}} \right\}_{\lambda \in \Lambda}
\end{equation*}
in the space $L^2(S)$.

Recall that a system of vectors $\{f_n\}$ in a Hilbert space $H$ is a \emph{Riesz basis} if every $f \in H$ admits a unique
expansion $f = \sum c_n f_n$, with the coefficients $\{c_n\}$ satisfying
\begin{equation*}
A\|f\|^2 \leq \sum |c_n|^2 \leq B\|f\|^2
\end{equation*}
for some positive  constants $A$ and $B$ which do not depend on $f$.
It is well-known that this is equivalent to the system $\{f_n\}$ being simultaneously a \emph{frame} and a \emph{Riesz sequence} in the Hilbert space $H$
(see e.g.\ \cite{young}). 

The Riesz basis property of the exponential system $E(\Lambda)$ in the space $L^2(S)$ can be reformulated in terms of the Paley-Wiener space $PW_S$, consisting of all functions $f \in L^2(\br^d)$ whose Fourier transform 
\begin{equation*}
\ft{f}(t) = \int f(x) \, e^{-2\pi i \ip{t}{x}} dx
\end{equation*}
is supported by $S$. Namely, $E(\Lambda)$ is a Riesz basis in $L^2(S)$ if and only if $\Lambda$ is a \emph{complete interpolation set} for $PW_S$. The latter means that the interpolation problem
$f(\lambda) = c_{\lambda}$ $(\lambda \in \Lambda)$
admits a unique solution $f \in PW_S$ for every sequence $\{ c_{\lambda} \} \in \ell^2(\Lambda)$.

The construction of a Riesz basis of exponentials on a given set $S$ is generally a difficult
problem, and so far was achieved only in relatively few examples (see \cite{gre-lev-tiling, Kol15, NitKoz1, NitKoz2, LyuRash00, LyuSeip97, Mar06}).
 In particular it is not known whether the ball in dimensions two and higher
admits such a basis. On the other hand, no example is known of a set $S$ which does not have a Riesz basis of exponentials.

\subsection{Density}
A set $\Lambda \subset \br^d$ is called \emph{uniformly discrete} (or separated) if there is $\delta(\Lam) > 0$ such that 
$|\lam-\lam'| \geq \delta(\Lam)$ for any two distinct points $\lam, \lam' \in \Lam$. This condition is necessary for the system
$E(\Lambda)$ to be a Riesz basis in $L^2(S)$, and so will always be assumed below.  

An important role in the subject is played by the Beurling lower and upper uniform densities of a uniformly discrete set $\Lambda$, defined  respectively by
\begin{align*}
\D^-(\Lambda) & = \liminf_{R\to\infty} \,\inf_{x\in\R^d} \,\frac{\#(\Lambda\cap (x+B_R))}{|B_R|},\\[8pt]
\D^+(\Lambda)& =\limsup_{R\to\infty} \,\sup_{x\in\R^d} \,\frac{\#(\Lambda\cap (x+B_R))}{|B_R|},
\end{align*}
where $B_R$ denotes the ball of radius $R$ centered at the origin.
Landau \cite{lan} (see also \cite{nit-olev}) obtained necessary conditions for the system $E(\Lambda)$ to be a frame, or a Riesz sequence, in $L^2(S)$ in terms of these densities:

\medskip
\emph{If $E(\Lam)$ is a frame in $L^2(S)$, then $\D^-(\Lam) \geq \mes S$;}

\smallskip

 \emph{If $E(\Lam)$ is a Riesz sequence in $L^2(S)$, then $\D^+(\Lam) \leq \mes S$.}
\medskip

In the case when $S$ is a single interval $I \subset \R$, this result is due to Beurling \cite{beu} and Kahane \cite{kah}, who also proved that the condition
$\D^-(\Lam) > |I|$ is sufficient for $E(\Lambda)$ to be a frame in $L^2(I)$, while the condition $\D^+(\Lam) < |I|$ is sufficient for it  to be a Riesz sequence.
However, for disconnected sets $S$ and in the multi-dimensional case, sufficient conditions in terms of densities alone can not be given.

If the two densities $\D^-(\Lam)$ and $\D^+(\Lam)$ coincide, then their common value is called the \emph{uniform density} of the set $\Lam$, and will be denoted by $\D(\Lam)$. It follows from Landau's results above that:

\medskip
\emph{If $E(\Lambda)$ is a Riesz basis in $L^2(S)$, then $\Lambda$ has a uniform density $\D(\Lam) = \mes S$.}

\subsection{Universality}
It was discovered by Olevskii and Ulanovskii \cite{olevskii-ulanovskii:universal-1, olevskii-ulanovskii:universal-2, ou3}
 that there exist ``universal'' sets $\Lambda$, such that the system $E(\Lambda)$ is a frame (respectively a Riesz sequence) on any set $S$ of sufficiently small (respectively  large) measure:

\medskip
\emph{Given $a >0$ there is a uniformly discrete set $\Lam  \subset\br^d$, $\D(\Lam)=a$, such that:
\begin{enumerate-math}
\item \label{it:unsam}
$E(\Lambda)$ is a frame in $L^2(S)$ for any compact set $S$ with
$\mes S < \D(\Lam)$;
\item \label{it:unint}
$E(\Lambda)$ is a Riesz sequence in $L^2(S)$ for any open set $S$ with
$\mes S > \D(\Lam)$.
\end{enumerate-math}}
\medskip

In \cite{olevskii-ulanovskii:universal-1, olevskii-ulanovskii:universal-2, ou3} a  set $\Lam$ with the property \ref{it:unsam} was named
a ``universal sampling set'', while a set satisfying  \ref{it:unint} was called a ``universal interpolation set''
(the names are due to the role which such sets play in the theory of sampling and interpolation in Paley-Wiener spaces).
It was shown that such a set $\Lambda$ may be constructed by an arbitrarily small perturbation of a lattice in $\br^d$. It was also proved that the topological restrictions given on the set $S$ are indeed necessary -- if $S$ is allowed to be an arbitrary bounded measurable set, then no universal sampling or interpolation sets exist.

\subsection{Quasicrystals}
A different construction of universal sampling and interpolation sets, based on Meyer's ``cut-and-project'' method \cite{Mey70, Mey72}, was proposed by Matei and Meyer in \cite{meyer1, meyer2, meyer3}.
Let $\Gamma$ be a lattice in $\br^{d+1} = \br^d \times \br$, and let $p_1$ and $p_2$ denote the projections onto $\br^d$ and $\br$, respectively. Assume that the restrictions of $p_1$ and $p_2$ to $\Gamma$ are injective, and that their images are dense. Let $I$ be a semi-closed interval on $\R$,  $I = [a,b)$ or $I=(a,b]$, and consider the cut-and-project set $\Lam$ in $\br^d$ defined by
\begin{equation}
\label{eq:sqc}
\Lambda = \Lambda(\Gamma, I) = \{ p_1(\gamma) : \gamma \in \Gamma, \; p_2(\gamma) \in I\}.
\end{equation}
In \cite{meyer1, meyer3} such a set was named a ``simple quasicrystal''. It is well-known that $\Lambda$ is a uniformly discrete set, with uniform density
\[
\D(\Lambda) = \frac{|I|}{\det \Gam}.
\]

\begin{theorem-m}[Matei and Meyer]
If $\Lam$ is a simple quasicrystal defined by \eqref{eq:sqc} then it is a universal sampling and interpolation set, that is, it satisfies both \ref{it:unsam} and \ref{it:unint} above. 
\end{theorem-m}

In \cite{meyer2} the question was raised of what can be said in the ``critical case'' when the measure of the set $S$ is equal to the density of $\Lam$. In the one-dimensional periodic setting, this was analyzed in \cite{kozmalev}. The goal of the present paper is to extend the results obtained in \cite{kozmalev} to several dimensions and to the non-periodic setting.

\subsection{Results}
A bounded set $S \subset \br^d$ is called \emph{Riemann measurable} if its boundary has measure zero. Our first main result shows that ``most'' quasicrystals $\Lam$ do not provide a Riesz basis of exponentials $E(\Lambda)$ for any Riemann measurable set $S$. 
\begin{theorem}\label{thm2}
Let $\Lam$ be a simple quasicrystal defined by \eqref{eq:sqc} and such that
\begin{equation}
\label{eq:icn}
|I| \notin p_2(\Gamma).
\end{equation}
Then there exists no Riemann measurable set $S$ such that $E(\Lambda)$ is a Riesz basis in $L^2(S)$.
\end{theorem}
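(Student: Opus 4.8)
The plan is to argue by contradiction: suppose $E(\Lambda)$ is a Riesz basis in $L^2(S)$ for some Riemann measurable set $S$. By Landau's theorem quoted above, $S$ must then have measure $\mes S = \D(\Lambda) = |I|/\det\Gamma$. The strategy is to use the cut-and-project structure to re-express the counting function $\#(\Lambda \cap (x+B_R))$ in terms of a dynamical system on the torus, and to show that the arithmetic condition \eqref{eq:icn} forces a mismatch that prevents the Riesz basis property. The key dynamical fact is that if $\Gamma$ is a lattice in $\br^d\times\br$ and $\alpha$ is the image in $\bt^d$ of the generator coming from the $\br$-direction, then the quasicrystal $\Lambda$ can be described via the orbit $\{n\alpha\}$ together with a "window" in the torus, and questions about $E(\Lambda)$ on $L^2(S)$ translate to discrepancy questions for this irrational rotation — this is the "connection to the discrepancy of multi-dimensional irrational rotations" advertised in the abstract.

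First I would set up the correspondence between a Riesz basis of exponentials $E(\Lambda)$ for $L^2(S)$ and an interpolation/sampling duality for the Paley–Wiener space $PW_S$, and then dualize the quasicrystal: by a standard observation (Matei–Meyer), $\Lambda(\Gamma,I)$ is, after a linear change of variables, essentially the set of frequencies $\{\lambda_n\}$ indexed by those $n$ for which $n\beta \bmod 1$ lands in an interval of length $|I|$, where $\beta$ is irrational. Next I would invoke the known equivalence (from the one-dimensional periodic theory of \cite{kozmalev}, which the present paper extends) between the Riesz basis property in the critical case and the set $S$ being, up to measure zero and modulo the lattice, a \emph{bounded remainder set} for the relevant rotation — the discrepancy of the points $\{n\alpha\}$ inside the appropriately transformed copy of $S$ must be bounded. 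Then the crux: a bounded remainder set for a rotation by $\alpha$ has measure belonging to the countable module generated by $1$ and the coordinates of $\alpha$ (a Kronecker/Hecke-type constraint), and under the identification this measure is exactly $\mes S \cdot \det\Gamma = |I|$; but condition \eqref{eq:icn} says precisely that $|I|$ does \emph{not} lie in $p_2(\Gamma)$, which is the relevant module. This contradiction finishes the proof.

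The main obstacle I expect is making the reduction "Riesz basis $\Rightarrow$ bounded remainder set" rigorous in the multi-dimensional, non-periodic setting, since in \cite{kozmalev} this was done only in dimension one and periodically. Concretely, one must show that the Riesz basis property of $E(\Lambda)$ on a Riemann measurable $S$ forces the sequence $\#(\Lambda \cap (x+B_R)) - \mes S\cdot |B_R|$ to stay bounded (or at least that a suitable integrated/weighted version stays bounded), and conversely extract from this boundedness the arithmetic constraint on $|I|$. The Riemann measurability hypothesis enters exactly here: it guarantees that the error in approximating $S$ by finite unions of the "cells" of the cut-and-project construction is controlled, so that the discrepancy estimate is not polluted by boundary effects. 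I would expect to need a careful Fourier-analytic computation showing that if some exponential sum over $\Lambda$ restricted to $S$ were unbounded, the Riesz bounds $A\|f\|^2 \le \sum|c_\lambda|^2 \le B\|f\|^2$ would be violated for a suitable test function $f\in PW_S$, thereby closing the loop. Once the dynamical reformulation is in place, the arithmetic conclusion that $\mes S \det\Gamma \in p_2(\Gamma)$ should follow from the classical theory of bounded remainder sets for irrational rotations.
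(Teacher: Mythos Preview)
Your overall architecture is right---argue by contradiction, reduce to a special lattice, pass via duality to a one-dimensional problem, and extract an arithmetic constraint on $|I|$ from discrepancy theory---but the central implication you propose, ``Riesz basis $\Rightarrow$ $S$ is a bounded remainder set'', is precisely the step that cannot be carried out, and the paper does \emph{not} use it. In fact the paper lists it as an open problem (\sect\ref{sec:open}) whether the discrepancy sequence $\{D_n(S)\}$ can be in $\bmo$ without being bounded. So your proposed route, as stated, hits a wall at exactly the place you flagged as the main obstacle.

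What the paper actually does is weaker on the discrepancy side but sufficient for the arithmetic conclusion. After reducing to the special lattice and applying duality (in the direction $S \to I$, not $I \to S$: one shows that if $E(\Lambda(\Gamma,I))$ is a Riesz basis in $L^2(S)$ then $E(\Lambda^*(\Gamma,S))$ is a Riesz basis in $L^2(I)$ for the \emph{one-dimensional} dual model set), the problem becomes: what does being a Riesz basis on an interval force? Here one invokes Pavlov's necessary condition (Theorem~\ref{thm:pavlov}), which gives only that the discrepancy function of $\Lambda^*$ lies in $\bmo(\br)$, hence $\{D_n(S)\}\in\bmo(\bz)$ (Lemma~\ref{lem:bmorel}). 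The arithmetic constraint $\mes S\in\bz+\bz\alpha_1+\cdots+\bz\alpha_d$ then follows not from the Hecke--Ostrowski--Kesten theory of bounded remainder sets, but from a separate $\bmo$ result (Theorem~\ref{thm:bmomes}, extending \cite{kozmalev}): $\bmo$ discrepancy already forces the cohomological equation $\chi_S-\mes S=g(\cdot+\alpha)-g$ to have a solution $g\in L^2(\bt^d)$ (not necessarily bounded), and an $L^2$ transfer function is enough to pin down $\mes S$. Your sketch conflates the $\bmo$ and $L^\infty$ levels of this hierarchy; replacing ``bounded remainder set'' by ``$\bmo$ discrepancy'' and inserting Pavlov's theorem as the bridge from the Riesz basis hypothesis is what makes the argument go through.
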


Hence there are only countably many possible values of the window length $|I|$ for which $E(\Lam)$ may serve as a Riesz basis.
 Our second main result shows that in the special case when $|I| \in p_2(\Gamma)$, the exponential system $E(\Lambda)$ indeed serves
as a Riesz basis for a family of sets $S$. To formulate the result we will need the following notion. 
\begin{definition*}
\label{def:equidecomp}
Two Riemann measurable sets $S$ and $S'$ in $\br^d$ are said to be \emph{equidecomposable} (or \emph{scissors congruent}) with respect to a group $G$ of rigid motions of $\br^d$, if the set $S$ can be partitioned into finitely many Riemann measurable subsets that can be reassembled using motions in $G$ to form, up to measure zero, a partition of $S'$. 
\end{definition*}
Equidecomposability is a classical notion dating back to Hilbert's third problem - the question of whether two polyhedra of equal volume are necessarily equidecomposable by rigid motions (see \cite{boltianski} for a detailed exposition of the subject).

\begin{theorem}\label{thm1}
Let $\Lam$ be a simple quasicrystal defined by \eqref{eq:sqc} and satisfying the condition
\begin{equation}
\label{eq:ic}
|I| \in p_2(\Gamma).
\end{equation}
Then $E(\Lam)$ is a Riesz basis in $L^2(S)$ for any Riemann measurable set $S$ such that
\begin{enumerate-math}
\item \label{it:dens}
$\mes S = \D(\Lam)$;
\item \label{it:equidecomp}
$S$ is equidecomposable to a {\piped} spanned by vectors in $p_1(\Gamma^*)$, using only translations by vectors in $p_1(\Gamma^*)$.
\end{enumerate-math}
\end{theorem}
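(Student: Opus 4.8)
The plan is to reduce Theorem~\ref{thm1} to the one-dimensional critical case and to the theory of bounded remainder sets (BRS) for irrational rotations. The first step is to recognize that a cut-and-project set $\Lambda(\Gamma,I)$ with $|I|\in p_2(\Gamma)$ can be analyzed via the torus $\mathbb{T} = \br^{d+1}/\Gamma$: the map $\gamma \mapsto p_2(\gamma) \bmod (\text{something related to } |I|)$ lets us view $\Lambda$ as obtained from a toral rotation, and the ``window'' $I$ of length lying in $p_2(\Gamma)$ plays the role of a fundamental domain in the $\br$-direction. I would make precise that under condition \eqref{eq:ic} the quasicrystal $\Lambda$ is, up to an affine change of variables, a model set for which the dynamical encoding by an irrational rotation $\alpha \in \mathbb{T}^d$ on a torus is exact, i.e.\ without the usual ``error of one point'' coming from a non-matching window length. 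This exactness is what condition \eqref{eq:ic} buys us, and it is precisely the multi-dimensional, non-periodic analogue of the hypothesis used in \cite{kozmalev}.

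Next I would set up the connection between the Riesz basis property of $E(\Lambda)$ on $S$ and the bounded remainder property. The key classical fact (going back to Hecke, Ostrowski, Kesten in dimension one, and developed in several variables) is that $S$ is a bounded remainder set for the rotation by $\alpha$ if and only if the discrepancy sums $\sum_{n} (\1_S(x+n\alpha) - \mes S)$ stay bounded; and a theorem of the kind announced in the abstract — ``any bounded remainder set admits a Riesz basis of exponentials'' — translates the BRS property of $S$ (with respect to the appropriate rotation built from $\Gamma$) into the complete interpolation property of $\Lambda$ for $PW_S$, equivalently the Riesz basis property of $E(\Lambda)$ in $L^2(S)$. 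So the strategy is: (i) show hypotheses \ref{it:dens} and \ref{it:equidecomp} imply that $S$ is a bounded remainder set for the relevant rotation, and (ii) invoke (or prove) the BRS $\Rightarrow$ Riesz basis implication.

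For step (i) I would use a theorem characterizing bounded remainder sets in several dimensions: a Riemann measurable set $S$ is a BRS for the rotation by $\alpha$ (coming from the lattice $\Gamma$) precisely when $\mes S$ has the ``right'' value and $S$ is equidecomposable, using only translations by vectors in the relevant dual-lattice projection $p_1(\Gamma^*)$, to a parallelepiped spanned by such vectors — this is exactly the content of hypotheses \ref{it:dens}–\ref{it:equidecomp}. The parallelepiped spanned by vectors of $p_1(\Gamma^*)$ is the model BRS (its indicator has an exact, finitely-supported Fourier–analytic description on the torus, making the Birkhoff sums telescope), and equidecomposability by translations in $p_1(\Gamma^*)$ preserves the BRS property because each translate contributes only a bounded fluctuation and the pieces are Riemann measurable so boundary effects are negligible. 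Thus the hypotheses are tailored so that $S$ is a BRS.

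The main obstacle I expect is step (ii): proving that the bounded remainder property actually yields a Riesz basis, i.e.\ both the frame (sampling) bound and the Riesz sequence (interpolation) bound simultaneously, in the critical case $\mes S = \D(\Lambda)$. Matei–Meyer's Theorem~M gives frame/Riesz-sequence only in the strictly sub-/super-critical regimes, and at criticality one must exploit the bounded-remainder cancellation to replace the missing strict inequality. Concretely, one has to show that the analysis (synthesis) operator associated with $E(\Lambda)$ is bounded below (above) on $PW_S$, and this reduces to uniform estimates for exponential sums $\sum_{\lambda\in\Lambda}|\widehat{f}(\lambda)|^2$ against $\|f\|^2$; pulling these back to the torus turns them into estimates for ergodic averages of $\1_S$ along the orbit of the rotation, where the bounded-remainder bound provides exactly the uniform control needed. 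Making this passage rigorous — handling the non-periodic (almost periodic) structure of $\Lambda$, and the fact that $S$ is only Riemann measurable rather than, say, a finite union of boxes — is where the real work lies, and I would expect to need a careful approximation argument together with the quantitative form of the BRS characterization.
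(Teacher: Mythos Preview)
Your step (i) is exactly right and matches the paper: hypotheses \ref{it:dens} and \ref{it:equidecomp} are precisely the characterization of Riemann measurable bounded remainder sets for the rotation by $\alpha$ (this is Theorem~\ref{thm:decomp} in the paper, coming from \cite{gr-lev-brs}), once one reduces to a lattice in special form with $p_1(\Gamma^*)=\bz\alpha+\bz^d$.

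The gap is in step (ii). You propose to obtain the frame and Riesz-sequence bounds for $E(\Lambda)$ on $L^2(S)$ directly, by ``pulling back'' the sampling sums $\sum_{\lambda\in\Lambda}|\widehat f(\lambda)|^2$ to ergodic averages of $\1_S$ and using the bounded-remainder control. This is not how the argument works, and I do not see a way to make it work: the sampling inequality is a statement about arbitrary $f\in PW_S$, and boundedness of the discrepancy of $\1_S$ along the orbit does not by itself give a lower bound for such sums. What you are missing is the \emph{duality} step (Theorem~\ref{thm:duality} and Corollary~\ref{cor:duality}): one does not study $E(\Lambda(\Gamma,I))$ on $L^2(S)$ directly, but instead shows that the \emph{dual} one-dimensional model set $\Lambda^*(\Gamma,S)\subset\br$ gives a Riesz basis $E(\Lambda^*)$ on the single interval $L^2(I)$; duality then transfers this back to $E(\Lambda)$ on $L^2(S)$.

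On the interval side the problem is classical, and the paper invokes Avdonin's theorem (Theorem~\ref{thm:avdonin}). The role of the BRS property of $S$ is now transparent: the points of $\Lambda^*(\Gamma,S)$ are indexed by the visits $n\alpha\in S$, so bounded discrepancy of $S$ forces $\Lambda^*$ to be a bounded perturbation of the arithmetic progression $(\mes S)^{-1}\bz$ (Lemma~\ref{lem:bddist}), and the Riemann integrability of the transfer function (Theorem~\ref{thm:cohom}) yields the averaged ``$1/4$'' condition of Avdonin (Lemma~\ref{lem:avgdev}). So the BRS property is used to control the geometry of the \emph{dual} frequency set on the line, not to estimate sampling sums on $S$ directly; without the duality reduction and Avdonin's criterion, your outline does not reach the conclusion.
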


Here we denote by $\Gamma^*$ the lattice dual to $\Gamma$ (see \sect \ref{sec:prelim}). 
Condition \eqref{eq:ic} ensures that the family of sets $S$ satisfying \ref{it:dens} and \ref{it:equidecomp} in Theorem \ref{thm1} is nonempty.
In fact, we will see that this family is in a sense ``dense'' among the sets of measure $\D(\Lambda)$:
\begin{proposition}
\label{prop:densefam}
Let $\Lam$ be a simple quasicrystal defined by \eqref{eq:sqc} and satisfying  \eqref{eq:ic}. 
Let $K$ be a compact set and $U$ be an open set
in $\br^d$, such that $K \subset U$ and $\mes K < \D(\Lambda) < \mes U$. Then one can find a Riemann measurable set $S$ such that $K \subset S \subset U$ and which satisfies  conditions \ref{it:dens} and \ref{it:equidecomp} in Theorem \ref{thm1}. 
\end{proposition}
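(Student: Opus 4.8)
The plan is to build $S$ by starting from a fundamental \piped{} $P$ for the lattice $p_1(\Gamma^*)$ in $\br^d$, scaled so that $\mes P = \D(\Lambda)$, and then surgically modifying $P$ using translations by vectors of $p_1(\Gamma^*)$ so that the result is squeezed between $K$ and $U$ while remaining equidecomposable to $P$ by such translations. The key point that makes this possible is condition \eqref{eq:ic}: as already asserted after the statement of Theorem \ref{thm1}, \eqref{eq:ic} guarantees that a \piped{} spanned by vectors in $p_1(\Gamma^*)$ of volume exactly $\D(\Lambda)$ exists. Indeed, if $|I| = p_2(\gamma_0)$ for some $\gamma_0 \in \Gamma$, then one checks (this is where the duality between $\Gamma$ and $\Gamma^*$ enters, \sect \ref{sec:prelim}) that $p_1(\gamma_0)$ together with a suitable basis of the sublattice $p_1(\Gamma^* \cap \ker p_2)$ — or a direct computation of covolumes — produces a \piped{} of volume $|I|/\det\Gamma = \D(\Lambda)$. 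I would record this as the first step.

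The second step is a ``filling-in'' lemma: given the fixed \piped{} $P$ with $\mes P = \D(\Lambda)$, and given $K \subset U$ with $\mes K < \D(\Lambda) < \mes U$, I want to produce a Riemann measurable $S$ with $K \subset S \subset U$ that is equidecomposable to $P$ using only translations by $p_1(\Gamma^*)$. The natural mechanism is to first translate $P$ by lattice vectors to tile a large region of $\br^d$, and then to ``transfer mass'' between copies: cut a small Riemann measurable piece $Q$ out of one translated copy of $P$ where $S$ should be thinner than the tiling, and glue it (via a $p_1(\Gamma^*)$-translation) into another copy where $S$ should be fatter. Because any two lattice-translates of $P$ are $p_1(\Gamma^*)$-translates of each other, each such transfer preserves equidecomposability-by-$p_1(\Gamma^*)$-translations and preserves total measure. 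Doing this along a suitable finite or countable exhaustion, one can carve out from the lattice tiling a set $S$ that contains $K$, is contained in $U$, and has measure exactly $\D(\Lambda)$; one does have to be a little careful to keep the number of pieces finite, which can be arranged since $K$ is compact and $U$ is open, so only finitely many fundamental cells meet $\partial K$ or $\partial U$ in a way that requires cutting. Riemann measurability of $S$ follows by choosing all the cut pieces with boundary of measure zero, e.g.\ as finite unions of sub-\piped{}s.

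The third, essentially bookkeeping, step is to check that the $S$ so constructed satisfies both \ref{it:dens} and \ref{it:equidecomp} of Theorem \ref{thm1}: property \ref{it:dens} holds because every transfer move is measure-preserving and we started from $\mes P = \D(\Lambda)$, while property \ref{it:equidecomp} holds by construction, composing the finitely many elementary moves into a single finite equidecomposition of $S$ with $P$ by $p_1(\Gamma^*)$-translations. The main obstacle I anticipate is the \emph{geometric realization} of the mass transfers: one must make sure that the pieces being moved can be chosen Riemann measurable, pairwise disjoint up to measure zero, and that after all transfers the union genuinely contains $K$ and is genuinely contained in $U$ — in other words, that the combinatorics of which lattice cell receives which piece is consistent. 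I would handle this by first fixing a large $R$ so that $K$ lies in the interior of a union $P_R$ of lattice-translates of $P$ with $P_R \subset U$ having $\mes P_R$ much larger than $\D(\Lambda)$, and then performing all transfers \emph{within} $P_R$: moving the ``excess'' out of cells near $\partial K$ into cells of $P_R$ that we intend to discard, so that $K \subset S \subset P_R \subset U$ throughout. This reduces everything to a finite, purely combinatorial redistribution inside a fixed bounded region, which makes the verification routine.
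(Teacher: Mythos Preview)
Your Step 2 has a genuine gap. You propose to work inside a region $P_R$ built from full tiling-cells with $K \subset P_R \subset U$, but the cells $P_j$ all have measure $\D(\Lambda)$ and hence diameter bounded below by a fixed positive constant; when $U$ is only a thin neighbourhood of $K$, every cell meeting $K$ will protrude outside $U$ and no such $P_R$ exists. Even when $P_R$ does exist, the ``redistribution'' is not routine combinatorics: covering $K$ forces $S \supset K \cap P_j$ for each cell $P_j$ meeting $K$, and exhibiting an equidecomposition of $S$ with a single copy of $P$ requires translating these pieces into $P$ onto pairwise disjoint targets. The natural translates $(K \cap P_j) - v_j$ can overlap arbitrarily in $P$, and unwinding those overlaps using the density of $p_1(\Gamma^*)$ is precisely where the real difficulty lies --- it is not a bookkeeping matter. (Your Step 1 sketch is also off: by the general-position assumption $p_2|_{\Gamma^*}$ is injective, so $\Gamma^* \cap \ker p_2 = \{0\}$ and the ``sublattice'' you invoke is trivial; moreover $\gamma_0 \in \Gamma$ gives $p_1(\gamma_0) \in p_1(\Gamma)$, not $p_1(\Gamma^*)$. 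The existence of a suitable \piped{} is Theorem \ref{thm:realized}, which needs its own argument.)

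The paper proceeds differently. After reducing to special-form lattices (Lemma \ref{lem:cor1spec}), condition \ref{it:equidecomp} becomes exactly the bounded remainder property for the rotation by $\alpha$ (Theorem \ref{thm:decomp}), and Proposition \ref{prop:densefam} reduces to Theorem \ref{thm:densmes}. The proof there uses \emph{small} BRS parallelepipeds $P_\varepsilon$ of diameter $<\varepsilon$ (not of measure $\D(\Lambda)$) to tile and thereby sandwich $K \subset A \subset B \subset U$ with $A,B$ bounded remainder sets and $\mes A < \D(\Lambda) < \mes B$. The decisive step --- producing a BRS $S$ with $A \subset S \subset B$ and $\mes S = \D(\Lambda)$ --- is handled by applying Theorem \ref{thm:anydecomp} from \cite{gr-lev-brs}, which says that any two Riemann measurable BRS of equal measure are equidecomposable using translations in $\Z\alpha + \Z^d$. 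This is a substantial result, and your direct geometric scheme would need a replacement for it.
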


The results above were outlined in \cite{gr-lev-cr}. 
 Special cases of Theorem \ref{thm1} were obtained in \cite{gre-lev-tiling, lev}. 
The present paper contains a detailed exposition and full proofs of the results.
In \sect \ref{sec:periodic} we also give analogous versions of the results in the periodic setting, where $S$ is a subset of the $d$-dimensional torus $\bt^d$, and the quasicrystal $\Lambda$ is a subset of $\bz^d$.

\subsection{Examples}
By particular choices of the lattice $\Gamma$ and the interval $I$ one can obtain more concrete versions of Theorem \ref{thm1}.

\begin{example}
\label{ex:exone}
Let $\alpha$ be an irrational number, and define
a sequence  $\Lam = \{\lam(n) \}$  by
\[
\lam(n) = n + \{ n \alpha\}, \quad n \in \Z
\]
(where $\{x\}$ denotes the fractional part of a real number $x$). Then the system $E(\Lam)$  is a Riesz basis in $L^2(S)$ for every set $S \subset \br$ which is the finite union of disjoint intervals with lengths in $\Z \alpha + \Z$ and of total length $1$.
\end{example}

\begin{example}
\label{ex:extwo}
The sequence $\Lam = \{\lam(n,m)\}$ defined by
\[
\lam(n,m) = (n,m) +  \{ n\sqrt 2 + m \sqrt 3 \} (\sqrt 2, \sqrt 3), \quad (n,m) \in \Z^2,
\]
provides a Riesz basis $E(\Lambda)$ in $L^2(S)$ for every set $S \subset \br^2$ which is equidecomposable to the unit square $Q = [0,1)^2$ using only translations by vectors in $\Z  (\sqrt 2, \sqrt 3) + \Z^2$.
\end{example}

These examples are special cases of Theorem \ref{thm:genex} below  (see \sect \ref{sec:genex}).

\subsection{Outline}
The proofs of the results above are based on three main ingredients. The first one is a key idea from \cite{meyer1, meyer2, meyer3} that we refer to as a ``duality'' principle, which  allows us to relate the Riesz basis property of $E(\Lambda)$ in $L^2(S)$ to the same property of another exponential system $E(\Lambda^*)$ in $L^2(I)$, where $I$ is the interval used in \eqref{eq:sqc} to define $\Lam$, and $\Lambda^*$ is a (non-simple) quasicrystal in $\R$ which is ``dual'' to $\Lambda$ (see \sect \ref{sect:dual}). 

This reduces the problem on exponential Riesz bases in $L^2(S)$ to a similar problem in $L^2(I)$, where $I$ is a single interval. The latter problem is much better understood due to availability of methods from the theory of entire functions, and we can use results of Avdonin \cite{avdonin} and Pavlov \cite{pavlov} that give conditions for $E(\Lambda^*)$ to be a Riesz basis on the interval $I$. This is the second main ingredient in our proofs.

To analyze the conditions from Avdonin and Pavlov's results we need our third main ingredient, which belongs to the theory of equidistribution and discrepancy for multi-dimensional irrational rotations. It is the theory of \emph{bounded remainder sets}, which in dimension one goes back to Hecke \cite{hecke}, Ostrowski \cite{ostrowski2, ostrowski} and Kesten \cite{kesten}. Using results from our recent paper \cite{gr-lev-brs} 
dealing with the multi-dimensional setting, we can prove that $E(\Lambda)$ is a Riesz basis on any bounded remainder set $S$ such that $\mes S = \D(\Lam)$.

The paper is organized as follows. \sect \ref{sec:prelim} contains some preliminary background.
In \sect \ref{sect:dual} the Matei-Meyer duality principle is explicitly formulated and proved. In \sect \ref{sec:special} we apply a linear change of variable to 
transform a general cut-and-project set to a canonical form which is more convenient to analyze.
In \sect \ref{sect:brs} we present relevant background on the concept of a bounded remainder set. The relation between this concept and one-dimensional cut-and-project sets is clarified in \sect \ref{sec:models-and-brs}. Finally in Sections \ref{sec:proof-thm-pos} and \ref{sec:proof-thm-neg} the main results are proved. The analogous results in the periodic setting are discussed in \sect \ref{sec:periodic}. In the last \sect \ref{sec:open} we mention some open problems.


\section{Preliminaries}
\label{sec:prelim}   

\subsection{Frames and Riesz sequences}
A system of vectors $\{f_n\}$ in a Hilbert space $H$ is called a \emph{frame} if there exist positive constants $A$ and $B$ such that the inequalities
\begin{equation}
\label{eq:framegen}
A \norm{f}^2 \leq \sum_n \left| \ip{f}{f_n} \right|^2 \leq B \norm{f}^2
\end{equation}
hold for all $f \in H$. The system $\{ f_n\}$ is called a \emph{Riesz sequence} if the inequalities
\begin{equation}
\label{eq:rseqgen}
A \sum_n \left|c_n \right|^2 \leq \Big\| \sum_n c_n f_n \Big\|^2 \leq B \sum_n \left| c_n \right|^2
\end{equation}
hold 
for every finite sequence of scalars $\{c_n\}$, for some positive constants $A$ and $B$ that do not depend on $\{c_n\}$.
The system $\{f_n\}$ is simultaneously a frame and a Riesz sequence if and only if it is a \emph{Riesz basis} in the Hilbert space $H$
(see \cite{young}).

If $S$ is a bounded, measurable set in $\br^d$, then 
the frame and Riesz sequence properties of the system of exponential functions $E(\Lambda)$ in the space $L^2(S)$ may be reformulated in terms of the sampling and interpolation properties of the set $\Lambda$ in the Paley-Wiener space $PW_S$. A discrete set $\Lambda \subset \br^d$ is called a \emph{set of sampling} for $PW_S$ if there are constants $A$ and $B$ such that 
\begin{equation*}
A \norm{f}_{L^2(\br^d)}^2 \leq \sum_{\lambda \in \Lambda} \left| f(\lambda) \right|^2 \leq B \norm{f}_{L^2(\br^d)}^2
\end{equation*} 
for all $f \in PW_S$. This means that a function $f \in PW_S$ can be reconstructed in a stable way from its samples $\{f(\lambda)\}$ on $\Lambda$. The set $\Lambda$ is called a \emph{set of interpolation} for $PW_S$ if the interpolation problem $f(\lam) = c_{\lam}$ has at least one solution $f \in PW_S$ for every sequence $\{c_{\lam}\} \in \ell^2(\Lam)$.
It is known (see \cite{young})  that $\Lambda$ is a set of  sampling for $PW_S$ if and only if the system $E(\Lambda)$ is a frame in the space $L^2(S)$, while the interpolation property of $\Lambda$ for the space $PW_S$ is equivalent to $E(\Lambda)$ being a Riesz sequence in $L^2(S)$.

The right hand side inequalities in \eqref{eq:framegen} and \eqref{eq:rseqgen} are automatically satisfied for the system $E(\Lambda)$ in $L^2(S)$
whenever $S$ is a bounded set and $\Lambda$ is a uniformly discrete set in $\br^d$  (see \cite{young}).
 Therefore, to show that $E(\Lambda)$ is a frame or Riesz sequence in $L^2(S)$, it is in this case enough to verify the left hand side inequalities in \eqref{eq:framegen} and \eqref{eq:rseqgen}. 

\subsection{Lattices}
By a (full-rank) \emph{lattice} $\Gamma \subset \br^k$ we mean the image of $\bz^k$ under an invertible $k \times k$ matrix $A$.
The determinant $\det (\Gamma)$ is equal to $| \det (A) |$. The \emph{dual lattice}  $\Gamma^*$  is the set of vectors $\gamma^* \in \br^k$ satisfying $\ip{\gamma}{\gamma^*} \in \bz$ for all $\gamma \in \Gamma$. Equivalently, $\Gamma^*$ is the image of $\bz^k$ under the matrix $A^{-\top}$, the inverse transpose of $A$.

\subsection{Model sets}
\label{sec:modelsets}
Let $\Gamma$ be a lattice in $\br^{n+m} = \br^n \times \br^m$, and let $p_1$ and $p_2$ denote the projections onto $\br^n$ and $\br^m$, respectively. Assume that the restrictions of $p_1$ and $p_2$ to $\Gamma$ are injective, and that their images are dense in $\br^n$ and $\br^m$, respectively. In this case we will say that $\Gamma$ is a lattice  in \emph{general position}.

Let $W$ be a bounded, Riemann measurable set in $\br^m$. Define a point set in $\br^n$ by
\begin{equation*}
\Lambda (\Gamma, W) := \left\{ p_1(\gamma) \, : \, \gamma \in \Gamma , \, p_2(\gamma) \in W \right\} .
\end{equation*}
Such a set is called a \emph{model set}, or a \emph{cut-and-project set}. These sets were introduced by Meyer in the beginning of 70's \cite{Mey70, Mey72}, and have been extensively studied as mathematical models for quasicrystals. 
The set $W$ is called the \emph{window} of the model set.

It is well-known that $\Lambda(\Gamma, W)$ is a uniformly discrete set, and has uniform density 
\begin{equation*}
\D(\Lambda(\Gamma, W)) =  \frac{\mes W}{\det \Gamma} 
\end{equation*}
(see for instance \cite[Proposition 5.1]{meyer3}).

\subsection{Simple quasicrystals}
The model set  $\Lambda(\Gamma, W)$ will be called a \emph{simple quasicrystal} if $m=1$ and if the window $W$ is a semi-closed interval $[a,b)$ or $(a,b]$.
This notion was introduced by Matei and Meyer in \cite{meyer1, meyer2, meyer3} where it was proved that a simple quasicrystal is a universal sampling and interpolation set. 

Remark that in these papers, the window $W$ was allowed to be also a closed interval $[a,b]$ or an open one $(a,b)$. Here, however, we define simple quasicrystals using only \emph{semi-closed} windows $W$, since otherwise this would affect the validity of Theorem \ref{thm1} in the case when both endpoints $a$, $b$ belong to $p_2(\Gamma)$.


\section{Duality}
\label{sect:dual}
\subsection{}
Let $\Gamma$ be a lattice in $\br^n \times \br^m$ in general position. Its dual lattice $\Gamma^*$ is then also in general position. Furthermore, let $U \subset \br^n$ and $V \subset \br^m$ be two bounded, Riemann measurable sets. There is a certain ``duality'' connecting the sampling and interpolation properties of the two model sets
\begin{align}
\label{eq:dualityeqs1} \Lambda(\Gamma, V) &= \{ p_1(\gamma) : \gamma \in \Gamma, \; p_2(\gamma) \in V\} \subset \br^n , \\
\label{eq:dualityeqs2} \Lambda^*(\Gamma, U) &= \{ p_2(\gamma^*) : \gamma^* \in \Gamma^*, \; p_1(\gamma^*) \in U\} \subset \br^m.
\end{align}
The following result was a key ingredient in Matei and Meyer's papers \cite{meyer1, meyer2, meyer3}, although it was not stated there explicitly in this form:
\begin{theorem}
\label{thm:duality} 
Suppose that the boundary of the set $V$ does not intersect $p_2(\Gamma)$. Then the following is true:
\begin{enumerate-math}
\item \label{item:frame} If $E(\Lambda^*(\Gamma, U))$ is a frame in $L^2(V)$, then $E(\Lambda(\Gamma, V))$ is a Riesz sequence in $L^2(U)$.
\item \label{item:riesz} If $E(\Lambda^*(\Gamma, U))$ is a Riesz sequence in $L^2(V)$, then $E(\Lambda(\Gamma, V))$ is a frame in $L^2(U)$.
\end{enumerate-math}
In the case when $\Lambda(\Gamma, V)$ is a simple quasicrystal (i.e.\ $m=1$ and $V$ is a semi-closed interval) the above is true regardless of whether or not the endpoints of $V$ lie in $p_2(\Gamma)$. 
\end{theorem}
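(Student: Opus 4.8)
The plan is to exploit the Poisson-type summation formula attached to the lattice $\Gamma$, which is the standard engine behind the Matei–Meyer duality. Fix $u \in L^2(U)$ and expand it in the exponential system $E(\Lambda(\Gamma,V))$; the coefficients of this expansion are the values $\ft{g}(p_1(\gamma))$ of the Fourier transform of an auxiliary function $g$ built from $u$, taken along the points $\gamma \in \Gamma$ with $p_2(\gamma) \in V$. Dually, testing a function $v \in L^2(V)$ against $E(\Lambda^*(\Gamma,U))$ produces the values of a Fourier transform along $\gamma^* \in \Gamma^*$ with $p_1(\gamma^*) \in U$. The two quadratic forms — the frame/Riesz-sequence sums for $E(\Lambda^*(\Gamma,U))$ in $L^2(V)$ and for $E(\Lambda(\Gamma,V))$ in $L^2(U)$ — are linked by applying Poisson summation over $\Gamma$ (equivalently over $\Gamma^*$) to the product of two such Fourier transforms. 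Concretely, I would introduce the function $F$ on $\br^{n+m}$ given by a tensor product $F(x,y) = \varphi(x)\psi(y)$, where $\ft{\varphi}$ is supported in $U$ and $\ft{\psi}$ is supported in $V$, write $\sum_{\gamma \in \Gamma} F(\gamma) = \frac{1}{\det\Gamma}\sum_{\gamma^* \in \Gamma^*} \ft{F}(\gamma^*)$, and observe that, because of the support conditions, the left side only sees $\gamma$ with $p_2(\gamma)\in V$ while the right side only sees $\gamma^*$ with $p_1(\gamma^*)\in U$. Polarizing and taking suitable suprema/infima over test functions then converts the sampling inequality on one side into the interpolation inequality on the other, which is exactly the content of \ref{item:frame} and \ref{item:riesz}.

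The step I expect to be the main obstacle is the legitimacy of Poisson summation: one must ensure the relevant series converge absolutely and that the summation formula holds pointwise, not merely in a distributional sense. The Fourier transforms in play are supported on the bounded sets $U$ and $V$, so $\varphi$ and $\psi$ are band-limited and in particular smooth; the difficulty is decay at infinity, since a generic $L^2$ function with compactly supported Fourier transform need not have summable restriction to a lattice. The standard remedy — and the reason the hypothesis on $\partial V$ enters — is an approximation argument: one first proves the identity for a dense class of nice test functions (e.g.\ those whose Fourier transforms are smooth and compactly supported strictly inside $U$, resp.\ $V$), where convergence is unproblematic, and then passes to the limit. The condition that $\partial V$ avoid $p_2(\Gamma)$ guarantees that no lattice point $\gamma$ sits on the ``boundary'' of the counting region, so that the relevant sums are stable under the approximation and one does not pick up spurious boundary contributions; this is precisely where replacing $V$ by a slightly larger or smaller window leaves $\Lambda(\Gamma,V)$ unchanged.

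For the final sentence — the case of a simple quasicrystal with $m=1$ and $V$ a semi-closed interval — the point is that the boundary hypothesis can be dropped. Here $\partial V = \{a,b\}$ consists of just two points. If, say, $b \in p_2(\Gamma)$, then the unique $\gamma \in \Gamma$ with $p_2(\gamma) = b$ either is or is not counted in $\Lambda(\Gamma,V) = \Lambda(\Gamma,[a,b))$, and changing the status of this single point changes $\Lambda(\Gamma,V)$ by at most one element. Adding or removing one vector from a uniformly discrete set affects neither the frame property nor the Riesz-sequence property of the associated exponential system (these are stable under finite perturbations), and likewise on the dual side the window $U$ is fixed so $\Lambda^*(\Gamma,U)$ does not move. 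Hence one can perturb $V$ to $[a', b')$ with $a', b' \notin p_2(\Gamma)$ and arbitrarily close to $a, b$, apply the already-proven case, and transfer the conclusion back. The only mild subtlety is that shrinking or enlarging $V$ also perturbs the space $L^2(V)$; but since the frame and Riesz-sequence constants depend continuously on the interval length in a neighborhood avoiding finitely many exceptional lengths, and since a sufficiently small perturbation of $V$ changes $\Lambda(\Gamma,V)$ by at most finitely many points, the argument goes through.
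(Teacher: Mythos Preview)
Your Poisson-summation philosophy is indeed the engine behind the paper's proof, but two concrete points in your plan do not work as written.

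\textbf{The support claim is wrong.} You take $F(x,y)=\varphi(x)\psi(y)$ with $\ft{\varphi}$ supported in $U$ and $\ft{\psi}$ supported in $V$, and assert that $\sum_{\gamma\in\Gamma}F(\gamma)$ ``only sees $\gamma$ with $p_2(\gamma)\in V$''. But $F(\gamma)=\varphi(p_1(\gamma))\psi(p_2(\gamma))$, and since $\psi$ is band-limited it is real-analytic and has no compact support whatsoever; the sum does \emph{not} restrict to $p_2(\gamma)\in V$. (Your claim about the right-hand side is fine: $\ft{F}(\gamma^*)$ vanishes unless $p_1(\gamma^*)\in U$.) What you actually need is a \emph{mixed} support condition --- one factor localized in physical space, the other in Fourier space --- and the paper implements this asymmetrically: it keeps the test function $f$ with $\ft f$ supported in $U$ (resp.\ $f$ supported in $U$), but on the $\br^m$ side it introduces a bump $\varphi_\varepsilon$ supported in a ball of radius $\varepsilon$. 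The Poisson formula then gives an exact identity for each $\varepsilon$, and two limit lemmas (Lemmas~\ref{lemma:dualequidist} and \ref{lemma:dualsupport}) extract, as $\varepsilon\to 0$, precisely the $L^2(U)$-norm on one side and the $\ell^2$-sum over $\{\gamma:p_2(\gamma)\in V\}$ on the other. Your ``polarize and take sup/inf'' step is too vague to stand in for this limiting argument.

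\textbf{The semi-closed interval reduction is flawed.} You argue that one can perturb $V=[a,b)$ to $[a',b')$ with $a',b'\notin p_2(\Gamma)$ and that this changes $\Lambda(\Gamma,V)$ by at most finitely many points. This is false: $p_2(\Gamma)$ is \emph{dense} in $\br$, so any nontrivial perturbation of the endpoints adds or removes infinitely many points of $\Lambda(\Gamma,V)$, and stability under finite perturbations is of no help. (Your earlier remark that toggling the inclusion of a single endpoint changes $\Lambda(\Gamma,V)$ by one point is correct, but it does not lead to endpoints outside $p_2(\Gamma)$.) The paper avoids this reduction entirely: for $V=[a,b)$ it chooses the mollifier $\varphi$ to be supported in $(0,1)$, so that each bump $\varphi_\varepsilon(\,\cdot\,-p_2(\gamma))$ is supported in $[p_2(\gamma),p_2(\gamma)+\varepsilon)\subset V$ whenever $p_2(\gamma)\in V$, regardless of whether the endpoints lie in $p_2(\Gamma)$. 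This one-sided choice of support is the whole trick for the last sentence of the theorem.
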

This was used in the proof of Theorem~M to reduce the problem on exponential systems in $L^2(S)$ to a similar problem in $L^2(I)$, where $I$ is a single interval. Then the Beurling-Kahane results, which give sufficient conditions for the frame or Riesz sequence properties in terms of densities, allow to conclude the proof.

By combining \ref{item:frame} and \ref{item:riesz} of Theorem \ref{thm:duality}, we obtain the following:
\begin{corollary}
\label{cor:duality}
Under the same conditions as in Theorem \ref{thm:duality}, if the exponential system $E(\Lambda^*(\Gamma, U))$ is a Riesz basis in $L^2(V)$, then $E(\Lambda(\Gamma, V))$ is a Riesz basis in $L^2(U)$.
\end{corollary}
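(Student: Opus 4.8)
The plan is to derive the corollary directly from Theorem~\ref{thm:duality} by unpacking the definition of a Riesz basis. Recall, as recorded in the introduction and again in \sect\ref{sec:prelim}, that a system of vectors in a Hilbert space is a Riesz basis if and only if it is simultaneously a frame and a Riesz sequence. This equivalence is exactly the bridge between the single hypothesis of the corollary and the two separate implications supplied by Theorem~\ref{thm:duality}.

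First I would observe that the hypothesis ``$E(\Lambda^*(\Gamma,U))$ is a Riesz basis in $L^2(V)$'' means, in particular, that $E(\Lambda^*(\Gamma,U))$ is both a frame in $L^2(V)$ and a Riesz sequence in $L^2(V)$. Applying part~\ref{item:frame} of Theorem~\ref{thm:duality} to the frame property then yields that $E(\Lambda(\Gamma,V))$ is a Riesz sequence in $L^2(U)$, while applying part~\ref{item:riesz} to the Riesz sequence property yields that $E(\Lambda(\Gamma,V))$ is a frame in $L^2(U)$. The hypotheses on $V$ and $\Gamma$ needed for both parts of the theorem are precisely those carried over into the statement of the corollary, so no extra assumption is required; and in the simple-quasicrystal case the endpoint condition is automatically waived, just as in Theorem~\ref{thm:duality}.

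Finally I would invoke the same equivalence once more, now in the reverse direction: since $E(\Lambda(\Gamma,V))$ has been shown to be simultaneously a frame and a Riesz sequence in $L^2(U)$, it is a Riesz basis in $L^2(U)$, which is the desired conclusion.

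Since every step is a direct citation of a result already available in the excerpt, there is no substantive obstacle here. The only point requiring the slightest care is to make sure that the two applications of Theorem~\ref{thm:duality} are made with the \emph{same} set $U$ and the \emph{same} set $V$, so that the frame and Riesz-sequence conclusions about $E(\Lambda(\Gamma,V))$ genuinely refer to one and the same system in one and the same space $L^2(U)$ and may therefore legitimately be combined into the Riesz basis property.
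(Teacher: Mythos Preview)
Your proposal is correct and follows exactly the paper's approach: the paper simply states that the corollary is obtained ``by combining \ref{item:frame} and \ref{item:riesz} of Theorem~\ref{thm:duality}'', which is precisely the argument you spell out using the equivalence Riesz basis $\Leftrightarrow$ frame and Riesz sequence.
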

We will also use this duality to reduce the problem on exponential Riesz bases from $L^2(S)$ to $L^2(I)$. However, the latter problem can no longer be solved by density considerations, and it requires a more detailed analysis of the exponential system in question.

The remainder of this section is devoted to the proof of Theorem \ref{thm:duality}. Although this result is essentially contained in \cite{meyer1, meyer3}, we find it useful to include a detailed exposition of the proof for the specific formulation above.

\subsection{}
\label{sec:phidef}
We will need two auxiliary lemmas. Fix the two bounded, Riemann measurable sets $U \subset \br^n$ and $V \subset \br^m$, and choose an infinitely smooth, nonnegative function $\varphi$ on $\br^m$ with $\norm{\varphi}_{L^2} =1$ and support in the $m$-dimensional ball of radius $1$ around the origin. Moreover, in the special case when $m=1$ and $V$ is the semi-closed interval $[a,b)$ or $(a,b]$, we let $\varphi$ be supported in the interval $(0,1)$ or $(-1,0)$, respectively.

For $0<\varepsilon<1$, we define $\varphi_{\varepsilon}$ to be the function 
\begin{equation*}
\varphi_{\varepsilon}(x) = \frac{1}{\varepsilon^{m/2}} \, \varphi(x/\varepsilon).
\end{equation*}
We have $\norm{\varphi_{\varepsilon}}_{L^2}=1$, and the Fourier transform $\widehat{\varphi}_{\varepsilon}$ is given by
\begin{equation*}
\widehat{\varphi}_{\varepsilon} (t) = \varepsilon^{m/2} \, \widehat{\varphi} (\varepsilon t).
\end{equation*}

\begin{lemma}
\label{lemma:dualequidist}
Let $f$ be a Riemann integrable function on $U$. Then
\begin{equation*}
\lim_{\varepsilon \rightarrow 0} \sum_{\gamma^* \in \Gamma^*, \, p_1(\gamma^*) \in U} \left| f(p_1(\gamma^*) ) \widehat{\varphi}_\varepsilon (p_2(\gamma^*))\right|^2 = \det (\Gamma) \int_{U} \left| f(x) \right|^2 \, dx .
\end{equation*}
\end{lemma}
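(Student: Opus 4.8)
The plan is to rewrite the sum in a form to which the Poisson summation formula can be applied, reduce the weight to a smooth test function, and then isolate the main term. Since $\widehat\varphi_\varepsilon(t)=\varepsilon^{m/2}\widehat\varphi(\varepsilon t)$, the sum in question equals
\[
\Sigma(\varepsilon):=\varepsilon^{m}\sum_{\gamma^*\in\Gamma^*}g(p_1(\gamma^*))\,\chi(\varepsilon\,p_2(\gamma^*)),
\qquad g:=|f|^2\,\1_U,\quad \chi:=|\widehat\varphi|^{\,2},
\]
where $g$ is a bounded, nonnegative, compactly supported, Riemann integrable function on $\br^n$, and $\chi$ is a nonnegative Schwartz function on $\br^m$ with $\int_{\br^m}\chi=\|\widehat\varphi\|_2^2=\|\varphi\|_2^2=1$ by Plancherel's theorem. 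As $\int_{\br^n}g=\int_U|f|^2$, the lemma amounts to the assertion $\Sigma(\varepsilon)\to\det(\Gamma)\int_{\br^n}g$ as $\varepsilon\to0$, and I will prove this for an arbitrary bounded, nonnegative, compactly supported, Riemann integrable $g$ on $\br^n$.

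\emph{Reduction to smooth $g$.} It suffices to treat the case $g\in C_c^\infty(\br^n)$. Indeed, given $\eta>0$, one can find $g_-,g_+\in C_c^\infty(\br^n)$ with $0\le g_-\le g\le g_+$ everywhere and $\int_{\br^n}(g_+-g_-)<\eta$ (sandwich $g$ between step functions using the Darboux criterion, then replace the indicator of each box by a suitable smooth bump function slightly larger, respectively slightly smaller, than it). Since $\chi\ge0$, replacing $g$ by $g_\pm$ in $\Sigma(\varepsilon)$ only enlarges or shrinks it, so the claim for $g_\pm$ yields
\[
\det(\Gamma)\!\int_{\br^n}\! g_-\;\le\;\liminf_{\varepsilon\to0}\Sigma(\varepsilon)\;\le\;\limsup_{\varepsilon\to0}\Sigma(\varepsilon)\;\le\;\det(\Gamma)\!\int_{\br^n}\! g_+ ,
\]
and letting $\eta\to0$ gives $\Sigma(\varepsilon)\to\det(\Gamma)\int_{\br^n}g$.

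\emph{The smooth case via Poisson summation.} Let now $g\in C_c^\infty(\br^n)$. The function $(u,v)\mapsto g(u)\chi(\varepsilon v)$ on $\br^{n+m}=\br^n\times\br^m$ belongs to the Schwartz class, and its Fourier transform is $(\sigma,\tau)\mapsto\widehat g(\sigma)\,\varepsilon^{-m}\widehat\chi(\tau/\varepsilon)$. Applying the Poisson summation formula over the lattice $\Gamma^*$, whose dual lattice is $\Gamma$ and whose covolume equals $1/\det(\Gamma)$, we obtain
\[
\Sigma(\varepsilon)=\varepsilon^{m}\sum_{\gamma^*\in\Gamma^*}g(p_1(\gamma^*))\,\chi(\varepsilon\,p_2(\gamma^*))
=\det(\Gamma)\sum_{\gamma\in\Gamma}\widehat g(p_1(\gamma))\,\widehat\chi\!\left(\frac{p_2(\gamma)}{\varepsilon}\right).
\]
The term $\gamma=0$ contributes $\det(\Gamma)\,\widehat g(0)\,\widehat\chi(0)=\det(\Gamma)\int_{\br^n}g$ (since $\widehat\chi(0)=\int\chi=1$), which is precisely the claimed limit; hence it remains to show that the sum over $\gamma\in\Gamma\setminus\{0\}$ tends to $0$ as $\varepsilon\to0$. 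This is where the general-position hypothesis is used: $p_2$ is injective on $\Gamma$, so $p_2(\gamma)\ne0$ for every $\gamma\ne0$, and therefore $|p_2(\gamma)/\varepsilon|\to\infty$ and $\widehat\chi(p_2(\gamma)/\varepsilon)\to0$ as $\varepsilon\to0$, for each fixed such $\gamma$. To conclude I will dominate the summand uniformly in $\varepsilon\in(0,1)$: since $\widehat g$ and $\widehat\chi$ are rapidly decreasing, for a fixed large $N$
\[
\bigl|\widehat g(p_1(\gamma))\,\widehat\chi(p_2(\gamma)/\varepsilon)\bigr|
\le C_N\,(1+|p_1(\gamma)|)^{-N}\,(1+|p_2(\gamma)|/\varepsilon)^{-N}
\le C_N\,(1+|p_1(\gamma)|)^{-N}\,(1+|p_2(\gamma)|)^{-N}
\]
for $0<\varepsilon<1$; and since $(1+|p_1(\gamma)|)(1+|p_2(\gamma)|)\ge 1+|p_1(\gamma)|+|p_2(\gamma)|\ge\tfrac12(1+|\gamma|)$, the right-hand side is at most $C_N'(1+|\gamma|)^{-N}$, which is summable over $\Gamma$ as soon as $N>n+m$. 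The dominated convergence theorem for series then gives $\sum_{\gamma\ne0}\widehat g(p_1(\gamma))\widehat\chi(p_2(\gamma)/\varepsilon)\to0$, which completes the smooth case and hence the proof.

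\emph{Main difficulty.} The crux is the vanishing of the $\gamma\ne0$ part of the Poisson-dual sum, which rests entirely on the injectivity of $p_2|_\Gamma$; for a lattice not in general position (say a product $\bz^n\times\bz^m$) the statement of the lemma is false, so this hypothesis cannot be dispensed with. The reduction to smooth test functions is routine but genuinely needed: for merely Riemann integrable $g$ the transform $\widehat g$ need not decay, and Poisson summation cannot be applied to $g(u)\chi(\varepsilon v)$ as it stands.
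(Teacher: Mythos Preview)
Your argument is correct. The paper does not give its own proof of this lemma but defers to \cite{meyer3}; the approach there is essentially the same Poisson-summation argument you carry out, namely: reduce to a smooth test function, apply Poisson over $\Gamma^*$ to move to a sum over $\Gamma$, isolate the $\gamma=0$ term as the main contribution, and kill the remainder using the injectivity of $p_2|_\Gamma$ together with rapid decay.
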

\begin{lemma}
\label{lemma:dualsupport}
Let $\{c(\gamma) \, : \, \gamma \in \Gamma \}$ be a sequence of complex numbers in $\ell^1(\Gamma)$. Then 
\begin{equation*}
\lim_{\varepsilon \rightarrow 0} \int_V \Big| \sum_{\gamma \in \Gamma} c(\gamma) \varphi_{\varepsilon}( t-p_2(\gamma)) \Big|^2 \, dt = \sum_{\gamma \in \Gamma , \, p_2(\gamma) \in V} |c(\gamma)|^2 , 
\end{equation*}
provided that $\partial V \cap p_2(\Gamma) = \emptyset$. 
If $m=1$ and $V$ is a semi-closed interval, the above is true regardless of whether or not the endpoints of $V$ lie in $p_2(\Gamma)$.
\end{lemma}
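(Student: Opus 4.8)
The plan is to establish the limit by expanding the square inside the integral and analyzing the resulting double sum. Writing $F_\varepsilon(t) = \sum_{\gamma \in \Gamma} c(\gamma)\,\varphi_\varepsilon(t - p_2(\gamma))$, we have
\begin{equation*}
\int_V |F_\varepsilon(t)|^2\,dt = \sum_{\gamma, \gamma' \in \Gamma} c(\gamma)\,\overline{c(\gamma')} \int_V \varphi_\varepsilon(t - p_2(\gamma))\,\overline{\varphi_\varepsilon(t - p_2(\gamma'))}\,dt .
\end{equation*}
Since $\{c(\gamma)\} \in \ell^1(\Gamma)$ and each $\varphi_\varepsilon$ has $L^2$-norm one, the double sum converges absolutely and uniformly in $\varepsilon$, so it suffices to compute the limit of each term $K_\varepsilon(\gamma, \gamma') := \int_V \varphi_\varepsilon(t - p_2(\gamma))\,\overline{\varphi_\varepsilon(t - p_2(\gamma'))}\,dt$ and then sum. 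The key point is that the functions $\varphi_\varepsilon(\cdot - p_2(\gamma))$ are supported in $\varepsilon$-neighborhoods of the points $p_2(\gamma)$, and since $\Gamma$ is in general position (hence $p_2$ is injective on $\Gamma$), for $\gamma \neq \gamma'$ the supports become disjoint once $\varepsilon$ is smaller than half the distance $|p_2(\gamma) - p_2(\gamma')|$; thus $K_\varepsilon(\gamma, \gamma') \to 0$ for $\gamma \neq \gamma'$. For the diagonal terms, $K_\varepsilon(\gamma, \gamma) = \int_V \varphi_\varepsilon(t - p_2(\gamma))^2\,dt = \int_{V - p_2(\gamma)} \varphi_\varepsilon(s)^2\,ds$, which tends to $1$ if $p_2(\gamma)$ lies in the interior of $V$, to $0$ if $p_2(\gamma)$ lies in the exterior, and requires care when $p_2(\gamma) \in \partial V$.

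The main obstacle is precisely the boundary case. Under the hypothesis $\partial V \cap p_2(\Gamma) = \emptyset$, no point $p_2(\gamma)$ lies on $\partial V$, so every diagonal term has a clean limit: $K_\varepsilon(\gamma,\gamma) \to \1_{V}(p_2(\gamma))$, which equals $\1_{\{p_2(\gamma) \in V\}}$ since the ambiguity only occurs on $\partial V$. (Here one uses that $\mes(\partial V) = 0$ together with the fact that $\varphi_\varepsilon^2$ is an approximate identity; for a point in the interior, the $\varepsilon$-ball around it eventually lies inside $V$, and similarly for the exterior.) Summing over the diagonal and using dominated convergence for the series (dominated by $\sum |c(\gamma)|^2 \le (\sum|c(\gamma)|)^2 < \infty$) gives $\lim_\varepsilon \int_V |F_\varepsilon|^2 = \sum_{\gamma \in \Gamma,\, p_2(\gamma) \in V} |c(\gamma)|^2$, as claimed.

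It remains to handle the special case $m = 1$ with $V$ a semi-closed interval, say $V = [a,b)$ (the case $(a,b]$ being symmetric), where we must allow $a$ or $b$ to lie in $p_2(\Gamma)$. Here we invoke the specific choice made in \sect \ref{sec:phidef}: when $V = [a,b)$ the function $\varphi$ is supported in $(0,1)$, so $\varphi_\varepsilon$ is supported in $(0,\varepsilon)$. Consequently, if $p_2(\gamma) = a$ (the left, closed endpoint), then $\varphi_\varepsilon(\cdot - a)$ is supported in $(a, a+\varepsilon) \subset [a,b)$ for small $\varepsilon$, so $K_\varepsilon(\gamma,\gamma) \to 1 = \1_{\{a \in V\}}$; while if $p_2(\gamma) = b$ (the right, open endpoint), then $\varphi_\varepsilon(\cdot - b)$ is supported in $(b, b+\varepsilon)$, which is disjoint from $[a,b)$, so $K_\varepsilon(\gamma,\gamma) \to 0 = \1_{\{b \in V\}}$. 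Thus the one-sided support of $\varphi$ exactly compensates for the semi-closed nature of $V$, and the diagonal limits again reproduce $\1_{\{p_2(\gamma) \in V\}}$ even at the endpoints. The off-diagonal terms still vanish by injectivity of $p_2$ on $\Gamma$, so the same summation argument concludes the proof.
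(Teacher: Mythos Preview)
Your argument is correct. The paper itself does not supply a proof of this lemma; it simply remarks that ``Proofs of these lemmas can basically be found in \cite{meyer3}.'' Your expansion into a double sum, the use of the uniform bound $|K_\varepsilon(\gamma,\gamma')|\le \|\varphi_\varepsilon\|_{L^2}^2=1$ together with $\{c(\gamma)\}\in\ell^1$ to justify interchanging limit and summation, the vanishing of off-diagonal terms via injectivity of $p_2|_\Gamma$, and the treatment of the diagonal terms (interior versus exterior, with the hypothesis $\partial V\cap p_2(\Gamma)=\emptyset$ ruling out the ambiguous boundary case) are all sound and constitute exactly the standard proof one would expect. Your handling of the semi-closed interval case, exploiting the one-sided support of $\varphi$ chosen in \sect \ref{sec:phidef}, is also correct and is precisely the reason that particular support was imposed.

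One very minor remark: in the general case the parenthetical ``one uses that $\mes(\partial V)=0$ together with the fact that $\varphi_\varepsilon^2$ is an approximate identity'' is not actually what drives the argument---you are using the purely topological fact that the shrinking support of $\varphi_\varepsilon(\cdot-p_2(\gamma))$ is eventually contained in $\inter V$ or in $\br^m\setminus\overline{V}$. The Riemann measurability of $V$ plays no role at this point (it matters elsewhere in the paper). This does not affect the validity of your proof.
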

Proofs of these lemmas can basically be found in \cite{meyer3}.

\subsection{}
We can now give the proof of Theorem \ref{thm:duality}. 

\begin{proof}[Proof of part \ref{item:frame} of Theorem \ref{thm:duality}]
Suppose that $E(\Lambda^*(\Gamma, U))$ is a frame in $L^2(V)$. We will show that $E(\Lambda(\Gamma, V))$ is a Riesz sequence in $L^2(U)$. 
Let
\begin{equation*}
f(x) = \sum_{\gamma \in \Gamma} c(\gamma) \exp  2 \pi i \ip{p_1(\gamma)}{x} ,
\end{equation*}
where only finitely many coefficients $c(\gamma)$ are nonzero, and $c(\gamma) = 0$ whenever $p_2(\gamma) \notin V$. Since $\Lambda(\Gamma, V)$ is uniformly discrete, we must only show that
\begin{equation}
\label{eq:locdualfr}
\int_U |f(x)|^2 \, dx \geq C \sum_{\gamma \in \Gamma} |c(\gamma)|^2 
\end{equation}
for some constant $C$ (not depending on the sequence $\{ c(\gamma) \}$) . 

Consider the function
\begin{equation*}
G_{\varepsilon} (t) = \sum_{\gamma \in \Gamma} c(\gamma) \varphi_{\varepsilon} (t-p_2(\gamma)) . 
\end{equation*}
Notice that for sufficiently small $\varepsilon$, this function is supported on $V$. If $m=1$ and $V$ is a semi-closed interval, this is clear from the specific choice of support for $\varphi$. Otherwise, we assume that $\partial V \cap p_2(\Gamma) = \emptyset$. We then have $p_2(\gamma) \in \inter V$ whenever $c(\gamma) \neq 0$, and the same assertion follows.

Because $E(\Lambda^*(\Gamma, U))$ is a frame in $L^2(V)$, we have
\begin{equation}
\label{eq:locGeps}
C \int_V \left| G_{\varepsilon} (t) \right|^2 \, dt \leq \sum_{\gamma^* \in \Gamma^* , \, p_1(\gamma^*) \in U} \left| \widehat{G}_{\varepsilon} (p_2(\gamma^*))\right|^2 ,
\end{equation}
for some constant $C>0$. Now let $\varepsilon \rightarrow 0$. By Lemma \ref{lemma:dualsupport}, the left hand side of \eqref{eq:locGeps} tends to $C \sum_{\gamma \in \Gamma} |c(\gamma)|^2$. For the right hand side of \eqref{eq:locGeps}, we observe that 
\begin{equation*}
\widehat{G}_{\varepsilon} (p_2(\gamma^*)) = f(p_1(\gamma^*) ) \widehat{\varphi}_{\varepsilon} (p_2(\gamma^*)) .
\end{equation*}
Hence, Lemma \ref{lemma:dualequidist} applied to the function $f \cdot \1_U$ implies that the right hand side of \eqref{eq:locGeps} tends to $\det (\Gamma) \int_U |f(x)|^2 \, dx$.
This verifies \eqref{eq:locdualfr}, and concludes the proof of part \ref{item:frame} of Theorem \ref{thm:duality}.
\end{proof}

\begin{proof}[Proof of part \ref{item:riesz} of Theorem \ref{thm:duality}]
Now suppose that $E(\Lambda^*(\Gamma, U))$ is a Riesz sequence in $L^2(V)$. We will show that $E(\Lambda(\Gamma, V))$ is a frame in $L^2(U)$. Since $\Lambda(\Gamma, V)$ is uniformly discrete, it is sufficient to show that 
\begin{equation}
\label{eq:locframe}
\int_U |f(x)|^2 \, dx \leq C \sum_{\lambda \in \Lambda(\Gamma, V)} |\widehat{f}(\lambda)|^2
\end{equation}
for every $f \in L^2 (U)$ and some constant $C>0$ independent of $f$. Since $U$ is Riemann measurable, it is in fact sufficient to verify \eqref{eq:locframe} for any smooth $f$ supported on $U$. 

Given such $f$, define the function
\begin{equation*}
F_{\varepsilon}(t) = \sum_{\gamma^* \in \Gamma^*} f(p_1(\gamma^*)) \widehat{\varphi}_{\varepsilon}(p_2(\gamma^*)) \exp (2\pi i \ip{p_2(\gamma^*)}{t} ) .
\end{equation*}
This is an absolutely convergent trigonometric sum with nonzero coefficients only for frequencies in $\Lambda^*(\Gamma, U)$. 
Since $E(\Lambda^*(\Gamma, U))$ is a Riesz sequence in $L^2(V)$, we have
\begin{equation}
\label{eq:locFeps}
\sum_{\gamma^* \in \Gamma^*} \left| f(p_1(\gamma^*) ) \widehat{\varphi}_{\varepsilon} ( p_2(\gamma^*)) \right|^2 \leq C \int_V \left| F_{\varepsilon} (t) \right|^2 \, dt ,
\end{equation}
for some constant $C>0$. 

Now let $\varepsilon \rightarrow 0$. Since $f$ is supported by $U$, Lemma \ref{lemma:dualequidist} implies that the left hand side of \eqref{eq:locFeps} tends to $\det (\Gamma) \int_U |f(x)|^2 \, dx$. 
On the other hand, using Poisson's summation formula we can rewrite $F_{\varepsilon}$ as 
\begin{equation*}
F_{\varepsilon}(t) = \det (\Gamma) \sum_{\gamma \in \Gamma} \widehat{f} (p_1(\gamma)) \varphi_{\varepsilon} (t - p_2(\gamma)) .
\end{equation*}
When integrating $|F_{\varepsilon}(t)|^2$ over $V$ in \eqref{eq:locFeps}, we may restrict the summation to those terms for which $|p_2(\gamma)| < M$ for some sufficiently large $M>0$ (as the other terms are supported outside of $V$). Since $\widehat{f}$ is a Schwarz function, the coefficients $\{ \widehat{f} (p_1(\gamma)\}$ in this restricted sum belong to $\ell^1$. It thus follows from Lemma \ref{lemma:dualsupport} that the right hand side of \eqref{eq:locFeps} tends to the right hand side of \eqref{eq:locframe} as $\varepsilon \rightarrow 0$. This completes the proof of part \ref{item:riesz} of Theorem \ref{thm:duality}. 
\end{proof}


\section{Lattices in special form}
\label{sec:special}

\subsection{}
In this section we introduce a notion of lattices in \emph{special form} (see Definition \ref{def:speciallat}). 
Our motivation for introducing these lattices is that this allows us to simplify the discussion by considering bounded remainder sets only with respect to irrational rotations on $\bt^d = \br^d / \bz^d$, and avoid discussion of general $d$-dimensional torus groups.

We will show that any lattice in general position can be mapped onto a lattice of special form by a linear and invertible transformation. We can then restrict our attention to lattices of special form, and prove Theorems \ref{thm2} and \ref{thm1} for such lattices only. 
\begin{definition}
\label{def:speciallat}
Let $\Gamma$ be a lattice in $\br^d \times \br$. We say that $\Gamma$ (with dual $\Gamma^*$) is of \emph{special form} if 
\begin{align}
\label{eq:gam} \Gamma &= \{ ((\operatorname{Id}+\beta \alpha^{\top})m -\beta n, \, n-\alpha^{\top}m) : m \in \bz^d, n \in \bz \}, \\
\label{eq:gamdual} \Gamma^* &= \{ (m+\alpha n, \, (1+\beta^{\top}\alpha)n+\beta^{\top}m) : m \in \bz^d, n \in \bz \} ,
\end{align}
where $\operatorname{Id}$ denotes the $d \times d$ identity matrix, and $\alpha$, $\beta$ are column vectors in $\br^d$ satisfying the following conditions:
\begin{enumerate-math}
\item \label{it:alpha} The vector $\alpha = (\alpha_1, \alpha_2, \ldots , \alpha_d)^{\top}$ is such that the numbers $1 , \alpha_1, \alpha_2, \ldots , \alpha_d$ are linearly independent over the rationals.
\item \label{it:beta} The vector $\beta = (\beta_1 , \beta_2, \ldots , \beta_d)^{\top}$ is such that the numbers $\beta_1, \beta_2, \ldots , \beta_d, 1+ \beta^{\top} \alpha$ are linearly independent over the rationals.
\end{enumerate-math}
\end{definition}
Notice that the conditions imposed on the vectors $\alpha$ and $\beta$ are precisely those necessary and sufficient for the lattice $\Gamma$ and its dual $\Gamma^*$ to be in general position. This is most easily seen by considering the dual $\Gamma^*$. We have that 
\begin{equation*}
p_1(\Gamma^*) = \bz^d + \alpha \bz ,
\end{equation*}
and it is well-known that this set is dense in $\br^d$ if and only if the numbers $1, \alpha_1, \alpha_2, \ldots , \alpha_d$ are linearly independent over the rationals. This condition also guarantees that $p_1$ is injective when restricted to $\Gamma^*$. Similarly, we see that $p_2$ restricted to $\Gamma^*$ is injective if and only if the numbers $\beta_1, \beta_2, \ldots , \beta_d, 1+ \beta^{\top} \alpha$ are linearly independent over the rationals. The same condition implies that $p_2(\Gamma^*)$ is a dense set in $\br$. Thus, any lattice of special form is a lattice in general position. 

Notice that if $\Gamma$ is a lattice of special form, then $\det \Gamma = \det \Gamma^* = 1$.

We will now see that it is sufficient to prove Theorems \ref{thm2} and \ref{thm1} for lattices of the special form \eqref{eq:gam}, \eqref{eq:gamdual}. We begin by establishing some preliminary lemmas. 

\subsection{}
Let $A$ be an $n \times n$ invertible matrix, and $B$ be an $m \times m$ invertible matrix. These determine a linear and invertible transformation $T$ from $\br^n \times \br^m$ to itself given by
\begin{equation}
\label{eq:transt}
T : (x, y) \mapsto (Ax, By) , 
\end{equation}
where $x \in \br^n$, $y \in \br^m$. Let $\Gamma$ and $L$ be two lattices in general position in $\br^n \times \br^m$, and let $U \subset \br^n$ and $V \subset \br^m$ be two bounded, Riemann measurable sets. 
\begin{lemma}
\label{lem:lattices}
Assume that $T$ maps $L$ onto $\Gamma$. Then the following are equivalent:
\begin{enumerate-math}
\item $E(\Lambda (L, V))$ is a Riesz basis in $L^2(U)$.
\item $E(\Lambda (\Gamma, BV))$ is a Riesz basis in $L^2(A^{-\top} U)$.
\end{enumerate-math}
\end{lemma}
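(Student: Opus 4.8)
The plan is to reduce the lemma to the elementary observation that an invertible linear change of variables on the domain transforms an exponential system into another exponential system in a way that preserves the Riesz basis property.

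First I would identify the two cut-and-project sets explicitly. Since $T$ maps $L$ onto $\Gamma$ and $T(x,y) = (Ax, By)$, every $\gamma \in \Gamma$ has the form $\gamma = T\ell = (A p_1(\ell), B p_2(\ell))$ for a unique $\ell \in L$, so that $p_1(\gamma) = A p_1(\ell)$ and $p_2(\gamma) = B p_2(\ell)$. Because $B$ is invertible, the condition $p_2(\gamma) \in BV$ is equivalent to $p_2(\ell) \in V$, and hence $\Lambda(\Gamma, BV) = A\,\Lambda(L, V)$.

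Next I would introduce the operator $\Phi : L^2(A^{-\top}U) \to L^2(U)$ defined by $(\Phi g)(x) = g(A^{-\top}x)$. A change of variables gives $\norm{\Phi g}_{L^2(U)}^2 = |\det A|\,\norm{g}_{L^2(A^{-\top}U)}^2$, so $\Phi$ is $|\det A|^{1/2}$ times a unitary operator; in particular it is a bounded, boundedly invertible bijection between the two Hilbert spaces. Moreover, for any $\lambda \in \br^d$ one has $(\Phi e^{2\pi i \ip{A\lambda}{\cdot}})(x) = e^{2\pi i \ip{A\lambda}{A^{-\top}x}} = e^{2\pi i \ip{\lambda}{x}}$, so $\Phi$ carries the system $E(A\,\Lambda(L,V))$ on $A^{-\top}U$ bijectively onto the system $E(\Lambda(L,V))$ on $U$. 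Invoking the standard fact that the Riesz basis property of a system of vectors in a Hilbert space is preserved under a bounded invertible operator, and combining with the identification $\Lambda(\Gamma, BV) = A\,\Lambda(L,V)$ from the first step, the equivalence of the two conditions in the lemma follows at once.

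This lemma is essentially bookkeeping, so there is no substantial obstacle; the only point requiring care is keeping the transpose and inverse conventions consistent — one must use $A^{-\top}$ (rather than $A$ or $A^{-1}$) in the definition of $\Phi$ so that the pairing $\ip{A\lambda}{A^{-\top}x} = \ip{\lambda}{x}$ comes out correctly, and correspondingly the domain must transform as $U \mapsto A^{-\top}U$ while the frequency set transforms as $\Lambda \mapsto A\Lambda$.
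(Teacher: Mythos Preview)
Your proof is correct and follows essentially the same approach as the paper: both first establish that $\Lambda(\Gamma, BV) = A\,\Lambda(L,V)$, and then reduce the equivalence to the standard fact that $E(\Lambda)$ is a Riesz basis in $L^2(U)$ if and only if $E(A\Lambda)$ is a Riesz basis in $L^2(A^{-\top}U)$. The paper simply invokes this last fact without proof, whereas you spell it out explicitly via the change-of-variables operator $\Phi$.
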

\begin{proof}
Since $\Gamma = T(L)$, we have that 
\begin{equation*}
\Lambda (\Gamma, BV) = \left\{ p_1(\gamma) \, : \, \gamma \in \Gamma , \, p_2(\gamma) \in BV \right\} = \left\{ A p_1(l) \, : \, l \in L , \, p_2(l) \in V  \right\} .
\end{equation*}
Hence, the set $\Lambda (\Gamma, BV)$ is the image of $\Lambda (L, V)$ under the linear and invertible transformation given by $A$. The result thus follows from the fact that for any point set $\Lambda \subset \br^n$, the exponential system $E(\Lambda)$ is a Riesz basis in $L^2(U)$ if and only if $E(A \, \Lambda)$ is a Riesz basis in $L^2(A^{-\top} U)$. 
\end{proof}
We remark that Lemma \ref{lem:lattices} remains true if the words Riesz basis are replaced by frame or Riesz sequence.

\subsection{}
We now restrict our attention to lattices in $\br^d \times \br$.
\begin{lemma}
\label{lem:equivclasses}
Let $L \subset \br^d \times \br$ be a lattice in general position. Then one can find a lattice $\Gamma$ of special form \eqref{eq:gam} and a linear and invertible transformation $T$ as in \eqref{eq:transt} such that $T(L)=\Gamma$.
\end{lemma}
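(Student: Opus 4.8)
The plan is to read off the data $\alpha,\beta$ and the matrices $A,B$ directly from a suitably normalized $\bz$-basis of $L$, and then to invoke the equivalence recorded just after Definition~\ref{def:speciallat} — that conditions \ref{it:alpha} and \ref{it:beta} are exactly what makes a lattice of the form \eqref{eq:gam} lie in general position — in order to conclude that the lattice we produce is indeed of special form. To begin, fix any $\bz$-basis $g_0,g_1,\dots,g_d$ of $L$. Since $p_2$ is injective on $L$ and $L\cong\bz^{d+1}$, the reals $p_2(g_0),\dots,p_2(g_d)$ are linearly independent over the rationals; in particular $p_2(g_0)\neq 0$. Set $B:=1/p_2(g_0)$ (a $1\times 1$ invertible matrix) and $\alpha_i:=-p_2(g_i)/p_2(g_0)$ for $i=1,\dots,d$, and let $L':=\operatorname{diag}(\operatorname{Id}_d,B)\,L$, with basis $g_i':=\operatorname{diag}(\operatorname{Id}_d,B)\,g_i$, so that $p_2(g_0')=1$ and $p_2(g_i')=-\alpha_i$. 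The rational independence of $p_2(g_0),\dots,p_2(g_d)$ becomes that of $1,\alpha_1,\dots,\alpha_d$, which is condition \ref{it:alpha}.

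The heart of the argument is the following claim: writing $v_i:=p_1(g_i')\in\br^d$, the $d$ vectors $v_1+\alpha_1v_0,\dots,v_d+\alpha_dv_0$ are linearly independent. Indeed, $\{g_i'\}$ spans $\br^{d+1}$ over $\br$ (being a basis of the full-rank lattice $L'$), and replacing $g_i'$ by $g_i'+\alpha_ig_0'$ for $i\ge1$ does not change the real span; hence $g_0'$ together with the vectors $g_i'+\alpha_ig_0'=(v_i+\alpha_iv_0,\,0)$, $i=1,\dots,d$, also spans $\br^{d+1}$. Since $g_0'$ alone contributes at most one dimension, the $d$ vectors $(v_i+\alpha_iv_0,0)$ must span the complementary $d$ dimensions, and so are linearly independent.

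Now let $A\in GL_d(\br)$ be the unique linear map with $A(v_i+\alpha_iv_0)=\epsilon_i$ for $i=1,\dots,d$ (the $i$-th standard basis vector), and put $\beta:=-Av_0\in\br^d$. Then $Av_i=A(v_i+\alpha_iv_0)-\alpha_iAv_0=\epsilon_i+\alpha_i\beta$, so the map $\operatorname{diag}(A,1)$ carries $g_0'$ to $(-\beta,1)$ and carries $g_i'$ to $(\epsilon_i+\alpha_i\beta,-\alpha_i)$ for $i\ge1$ — exactly the basis vectors appearing in \eqref{eq:gam}. Hence $\Gamma:=T(L)$ with $T:=\operatorname{diag}(A,B)=\operatorname{diag}(A,1)\circ\operatorname{diag}(\operatorname{Id}_d,B)$ is precisely the lattice $\{((\operatorname{Id}+\beta\alpha^{\top})m-\beta n,\;n-\alpha^{\top}m)\}$, so $\Gamma$ has the form \eqref{eq:gam}. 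Finally, since $p_1\circ T=A\circ p_1$ with $A$ a linear homeomorphism of $\br^d$, and $p_2\circ T=B\,p_2$, the lattice $\Gamma=T(L)$ inherits from $L$ both injectivity of the two projections and density of their images; thus $\Gamma$ is in general position, and by the equivalence noted after Definition~\ref{def:speciallat} this forces $\alpha$ and $\beta$ to satisfy \ref{it:alpha} and \ref{it:beta}. Therefore $\Gamma$ is of special form and $T(L)=\Gamma$, as required.

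The only step I expect to be a genuine obstacle is the linear-independence claim; everything else is bookkeeping once one sees that the right move is to rescale the $\br$-factor so that $p_2(g_0)=1$ and then to pass to the ``tilted'' combinations $v_i+\alpha_iv_0$, which correspond to vectors $g_i'+\alpha_ig_0'$ lying in $\br^d\times\{0\}$. One small point to handle carefully is the use of the stated equivalence between conditions \ref{it:alpha}--\ref{it:beta} and general position; for \ref{it:beta} this is cleanest to verify, as in the text, by passing to the dual lattice $\Gamma^*$.
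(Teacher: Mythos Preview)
Your argument is correct. The linear-independence claim is fine: since the full-rank basis $\{g_i'\}$ spans $\br^{d+1}$ over the reals, and the sheared vectors $g_i'+\alpha_i g_0'$ for $i\geq 1$ all land in $\br^d\times\{0\}$ while $g_0'$ has last coordinate $1$, those $d$ vectors must span the $d$-dimensional slice and hence be independent. The remaining verifications (that $T$ carries the basis of $L$ to the standard generators of \eqref{eq:gam}, and that general position is preserved) are routine and carried out correctly.

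Your route differs from the paper's. The paper passes to the \emph{dual}: it writes $L^*$ as $M(\bz^{d+1})$ with $M(m,n)=(am+bn,\,c^\top m+en)$, and then sets $A=a^{-1}$, $B=1/(e-c^\top a^{-1}b)$, $\alpha=Ab$, $\beta=Bc$; the one nontrivial point is that $a$ is invertible, deduced from the density of $p_1(L^*)$. You instead work primally with a basis of $L$, rescaling the $\br$-factor and then using the spanning/independence argument to define $A$. What the paper's approach buys is slightly cleaner algebra, since the dual lattice $\Gamma^*$ in \eqref{eq:gamdual} has the simpler first coordinate $m+\alpha n$, so the block-matrix formulas fall out immediately. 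What your approach buys is directness: there is no need to pass through duality, and the geometric content --- that after rescaling one can shear the basis into the hyperplane $\br^d\times\{0\}$ --- is made explicit. Both arguments hinge on the same underlying fact (full rank of the lattice together with general position), just viewed from opposite sides of the duality.
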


\begin{proof}
Rather than showing that there exists a transformation of the form \eqref{eq:transt} mapping $L$ onto $\Gamma$, we will prove the equivalent claim that there exists a transformation of the form \eqref{eq:transt} mapping $L^*$ onto $\Gamma^*$. To see that these are indeed equivalent, observe that if the transformation $(x,y) \mapsto (Ax, By)$ maps $L$ onto $\Gamma$, then the dual lattice $\Gamma^*$ is the image of $L^*$ under the transformation $(x,y) \mapsto (A^{-\top}x, B^{-\top}y)$.

The lattice $L^*$ is the image of $\bz^{d+1}$ under a linear and invertible transformation. Let this transformation be represented by the matrix $M$, with 
\begin{equation*}
M (m,n) = (am + bn , c^{\top}m + en) , \quad m \in \bz^d , \, n \in \bz , 
\end{equation*}
where $a$ is a $d \times d$ matrix, $b$ and $c$ are $d \times 1$ vectors, and $e$ is a scalar. 

Let $T$ be the transformation in \eqref{eq:transt} with $A = a^{-1}$ and $B = 1/ (e - c^{\top} a^{-1} b)$. The fact that the set 
\begin{equation*}
p_1(L^*) = \left\{ am + bn \, : \, m \in \bz^d , \, n \in \bz \right\}
\end{equation*}
is dense in $\br^d$ guarantees that the matrix $a$ is invertible, so $A$ is well-defined. The scalar $B$ is also well-defined, since $e - c^{\top} a^{-1} b = \det M / \det a \neq 0$. One can check that for this choice of $T$ we have that $T(L^*) = \Gamma^*$, where $\Gamma^*$ is given in \eqref{eq:gamdual} with $\alpha := A b$ and $\beta := B c$. 
Finally, since $p_1(\Gamma^*) = Ap_1(L^*)$ and $p_2(\Gamma^*)=Bp_2(L^*)$, and $L^*$ is in general position, it follows that also $\Gamma^*$ must be in general position. This in turn implies that the vectors $\alpha$, $\beta$ must satisfy the conditions in Definition \ref{def:speciallat}.
\end{proof}

\subsection{}
With Lemmas \ref{lem:lattices} and \ref{lem:equivclasses} established, let us now use these to show that we can restrict our attention to lattices of special form when proving Theorems \ref{thm2} and \ref{thm1}. 

\begin{lemma}
\label{lem:thm2spec}
If Theorem \ref{thm2} is true for lattices $\Gamma$ of the special form \eqref{eq:gam}, then it is true for any lattice in general position.
\end{lemma}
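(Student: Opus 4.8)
The plan is to deduce the general statement from the special-form case by means of the linear change of variables furnished by Lemmas~\ref{lem:lattices} and~\ref{lem:equivclasses}, while checking that the arithmetic hypothesis \eqref{eq:icn} survives this change. Accordingly, I would assume Theorem~\ref{thm2} for all lattices of the special form \eqref{eq:gam}, take an arbitrary lattice $L \subset \br^d \times \br$ in general position together with a semi-closed interval $I \subset \br$ satisfying $|I| \notin p_2(L)$, and argue by contradiction: suppose that $E(\Lambda(L, I))$ is a Riesz basis in $L^2(S)$ for some Riemann measurable set $S \subset \br^d$.

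First I would apply Lemma~\ref{lem:equivclasses} to obtain a special-form lattice $\Gamma$ and a transformation $T\colon (x,y)\mapsto (Ax, By)$ of the shape \eqref{eq:transt}, with $A$ an invertible $d\times d$ matrix and $B$ a nonzero scalar, such that $T(L)=\Gamma$. Lemma~\ref{lem:lattices} (with $U=S$ and $V=I$) then tells us that $E(\Lambda(L,I))$ is a Riesz basis in $L^2(S)$ if and only if $E(\Lambda(\Gamma, BI))$ is a Riesz basis in $L^2(A^{-\top}S)$. Here $BI$ is again a semi-closed interval, of length $|B|\,|I|$ (with the two endpoints interchanged when $B<0$), so $\Lambda(\Gamma, BI)$ is again a simple quasicrystal; and $A^{-\top}S$ is again bounded and Riemann measurable, because $\partial(A^{-\top}S)=A^{-\top}(\partial S)$ and an invertible linear map carries null sets to null sets.

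It remains to verify that \eqref{eq:icn} passes to $\Gamma$ and $BI$, i.e.\ that $|BI|\notin p_2(\Gamma)$; this is the only step requiring a word of justification. Since $p_2(T\gamma)=B\,p_2(\gamma)$ we have $p_2(\Gamma)=B\,p_2(L)$, and $p_2(L)$ is a subgroup of $\br$, being the image of the lattice $L$ under a group homomorphism, hence symmetric about the origin. Consequently $|BI|=|B|\,|I|\in B\,p_2(L)$ would force $|I|\in p_2(L)$, contradicting the hypothesis, so $|BI|\notin p_2(\Gamma)$. Thus $\Gamma$, $BI$ and $A^{-\top}S$ meet all the hypotheses of Theorem~\ref{thm2} for special-form lattices, which rules out the existence of such a Riesz basis, giving the desired contradiction. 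I do not expect a genuine obstacle here; the only point needing care is precisely this preservation of \eqref{eq:icn}, which rests on the symmetry of the subgroup $p_2(L)$.
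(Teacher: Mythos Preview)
Your argument is correct and follows essentially the same route as the paper: apply Lemma~\ref{lem:equivclasses} to pass to a special-form lattice, invoke Lemma~\ref{lem:lattices} to transfer the Riesz basis, and use $p_2(\Gamma)=B\,p_2(L)$ to translate the arithmetic condition. The paper phrases this as the contrapositive (assuming a Riesz basis exists and deducing $|I|\in p_2(L)$), and you are in fact slightly more careful than the paper in handling the sign of $B$ via the subgroup property of $p_2(L)$ and in noting that $BI$ remains semi-closed and $A^{-\top}S$ remains Riemann measurable.
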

\begin{proof}
Assume that Theorem \ref{thm2} holds for any lattice of the special form \eqref{eq:gam}. Let $L$ be a lattice in general position, and let $I$ be an interval. Suppose that the quasicrystal $\Lambda(L, I)$ provides a Riesz basis of exponentials in $L^2(U)$ for some Riemann measurable set $U$. We will show that this implies $|I| \in p_2(L)$. 

By Lemma \ref{lem:equivclasses}, there exists a linear and invertible transformation $T$ as in \eqref{eq:transt} mapping $L$ onto a lattice $\Gamma$ of special form \eqref{eq:gam}. By Lemma \ref{lem:lattices}, the set of exponentials $E(\Lambda (\Gamma, BI))$ is a Riesz basis in $L^2(A^{-\top} U)$, and the set $A^{-\top} U$ is Riemann measurable. Since Theorem \ref{thm2} holds for the lattice $\Gamma$, this implies that $|BI| \in p_2(\Gamma)$. Finally, observe that since $T(L) = \Gamma$, we have $p_2(\Gamma)=Bp_2(L)$, and thus $|I| \in p_2(L)$.
\end{proof} 

\begin{lemma}
\label{lem:thm1spec}
If Theorem \ref{thm1} is true for lattices $\Gamma$ of the special form \eqref{eq:gam}, then it is true for any lattice in general position.
\end{lemma}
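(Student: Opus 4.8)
The plan is to follow the pattern of the proof of Lemma~\ref{lem:thm2spec}, transporting the data of Theorem~\ref{thm1} through the linear map supplied by Lemma~\ref{lem:equivclasses}. Assume Theorem~\ref{thm1} holds for every lattice of the special form~\eqref{eq:gam}. Let $L$ be a lattice in general position in $\br^d\times\br$, let $I$ be a semi-closed interval with $|I|\in p_2(L)$, and let $S$ be a Riemann measurable set satisfying conditions~\ref{it:dens} and~\ref{it:equidecomp} of Theorem~\ref{thm1} with respect to $\Lam(L,I)$. First I would invoke Lemma~\ref{lem:equivclasses} to fix a transformation $T\colon(x,y)\mapsto(Ax,By)$ as in~\eqref{eq:transt} with $T(L)=\Gam$ for some lattice $\Gam$ of special form; as recorded in the proof of that lemma, the map $(x,y)\mapsto(A^{-\top}x,B^{-\top}y)$ then carries $L^*$ onto $\Gam^*$, whence $p_1(\Gam^*)=A^{-\top}p_1(L^*)$ and $p_2(\Gam)=B\,p_2(L)$, and moreover $\det\Gam=|\det A|\,|B|\,\det L=1$.

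Next I would set $S':=A^{-\top}S$ and $I':=BI$ (again a semi-closed interval) and check that $\Lam(\Gam,I')$ and $S'$ satisfy the hypotheses of Theorem~\ref{thm1}. Condition~\eqref{eq:ic}: from $|I|\in p_2(L)$ and $p_2(\Gam)=B\,p_2(L)$ one gets $|I'|=|B|\,|I|\in p_2(\Gam)$. Condition~\ref{it:dens}: since $\det L=(|\det A|\,|B|)^{-1}$, a short computation gives $\mes S'=|\det A|^{-1}\mes S=|\det A|^{-1}|I|/\det L=|B|\,|I|=|I'|/\det\Gam=\D(\Lam(\Gam,I'))$. Condition~\ref{it:equidecomp}: if $S$ is partitioned into Riemann measurable pieces $S_1,\dots,S_N$ which, after translation by vectors $v_1,\dots,v_N\in p_1(L^*)$, reassemble up to measure zero into a \piped\ $P$ spanned by vectors in $p_1(L^*)$, then applying the linear isomorphism $A^{-\top}$ exhibits $S'$ as partitioned into the Riemann measurable pieces $A^{-\top}S_1,\dots,A^{-\top}S_N$ which, after translation by $A^{-\top}v_1,\dots,A^{-\top}v_N\in A^{-\top}p_1(L^*)=p_1(\Gam^*)$, reassemble up to measure zero into the \piped\ $A^{-\top}P$ spanned by vectors in $p_1(\Gam^*)$; here one uses that a linear isomorphism maps Riemann measurable sets to Riemann measurable sets and null sets to null sets, and conjugates a translation by $v$ to the translation by $A^{-\top}v$. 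Hence $S'$ satisfies~\ref{it:equidecomp} relative to $\Gam$.

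With these checks in place, the assumed validity of Theorem~\ref{thm1} for the special-form lattice $\Gam$ gives that $E(\Lam(\Gam,I'))$ is a Riesz basis in $L^2(S')$. Since $\Lam(\Gam,BI)=A\,\Lam(L,I)$ and $S'=A^{-\top}S$, I would then apply Lemma~\ref{lem:lattices} (in the form: $E(\Lam)$ is a Riesz basis in $L^2(U)$ if and only if $E(A\,\Lam)$ is a Riesz basis in $L^2(A^{-\top}U)$) to conclude that $E(\Lam(L,I))$ is a Riesz basis in $L^2(S)$. As $S$ was an arbitrary Riemann measurable set satisfying~\ref{it:dens} and~\ref{it:equidecomp} relative to $L$, and $L$ an arbitrary lattice in general position, this is exactly Theorem~\ref{thm1} for $L$.

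The main point requiring care is the verification of condition~\ref{it:equidecomp}: one must know that the family of translations by vectors in $p_1(L^*)$ is conjugated by $A^{-\top}$ precisely onto the family of translations by vectors in $p_1(\Gam^*)$, which rests on the identity $p_1(\Gam^*)=A^{-\top}p_1(L^*)$ coming from the duality of $T$. Everything else is the same determinant bookkeeping already used in the proof of Lemma~\ref{lem:thm2spec}, so I expect no further obstacle.
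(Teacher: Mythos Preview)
Your proof is correct and follows essentially the same approach as the paper: transport the data through the map $T$ of Lemma~\ref{lem:equivclasses}, verify that $A^{-\top}S$ satisfies conditions~\ref{it:dens} and~\ref{it:equidecomp} relative to the special-form lattice $\Gamma$ (using $p_1(\Gamma^*)=A^{-\top}p_1(L^*)$ and the determinant relation $|\det A|\,|B|\det L=\det\Gamma$), apply the special-form case of Theorem~\ref{thm1}, and pull back via Lemma~\ref{lem:lattices}. Your write-up is slightly more explicit about the equidecomposability step, but the argument is the same.
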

\begin{proof}
Assume that Theorem \ref{thm1} holds for any lattice of the special form \eqref{eq:gam}. Let $L$ be a lattice in general position, and let $I$ be an interval satisfying the condition $|I| \in p_2(L)$. Denote by $S$ a Riemann measurable set with $\mes S = \D(\Lambda(L, I))$, which is equidecomposable to a {\piped} spanned by vectors in $p_1(L^*)$ using only translations by vectors in $p_1(L^*)$. We will show that $E(\Lambda (L, I))$ is a Riesz basis in $L^2(S)$.

By Lemma \ref{lem:equivclasses}, there exists a linear and invertible transformation $T$ as in \eqref{eq:transt} mapping $L$ onto a lattice $\Gamma$ of special form \eqref{eq:gam}. We have that $Bp_2(L) = p_2(\Gamma)$, and thus the condition $|I| \in p_2(L)$ implies that $|BI| \in p_2(\Gamma)$. Since Theorem \ref{thm1} holds for the lattice $\Gamma$, it follows that $E(\Lambda (\Gamma, BI))$ is a Riesz basis in $L^2(U)$ for any set $U$, with $\mes U = \D(\Lambda (\Gamma, BI))$, which is equidecomposable to a {\piped} spanned by vectors in $p_1(\Gamma^*)$ using only translations by vectors in $p_1(\Gamma^*)$. Hence, if we can show that the set $A^{-\top} S$ satisfies these two conditions, then the proof will be concluded by Lemma \ref{lem:lattices}.

Let us first verify that $\mes A^{-\top} S = \D(\Lambda (\Gamma, BI))$. To see this, observe that $T(L)=\Gamma$ implies $|B \det A| \det L = \det \Gamma$, and hence
\begin{equation*}
\D(\Lambda (\Gamma, BI)) = \frac{|BI|}{\det \Gamma} = \frac{|I|}{|\det A |\det L } = \frac{\D( \Lambda (L,I))}{|\det A|}.
\end{equation*}
Since $\mes S = \D( \Lambda (L, I))$, we get $\mes A^{-\top} S = \D( \Lambda (L,I))/ |\det A| = \D( \Lambda (\Gamma, BI))$.

Let us now see that $A^{-\top} S$ satisfies the appropriate equidecomposability condition.
Recall that if $T(L)=\Gamma$, then the dual lattice $\Gamma^*$ is the image of $L^*$ under the transformation $(x,y) \mapsto (A^{-\top}x, B^{-\top}y)$. In particular, the matrix $A^{-\top}$ sends any vector in $p_1(L^*)$ to a vector in $p_1(\Gamma^*)$. It follows that $A^{-\top}$ maps any {\piped} spanned by vectors in $p_1(L^*)$ (respectively, any set equidecomposable to such a {\piped} using translations by vectors in $p_1(L^*)$) to a {\piped} spanned by vectors in $p_1(\Gamma^*)$ (respectively, a set equidecomposable to such a {\piped} using translations by vectors in $p_1(\Gamma^*)$). Hence, the set $A^{-\top} S$ is equidecomposable to a {\piped} spanned by vectors in $p_1(\Gamma^*)$ using only translations by vectors in $p_1(\Gamma^*)$. This completes the proof of Lemma \ref{lem:thm1spec}.
\end{proof}

\subsection{}
In a similar way, we can show the same for Proposition \ref{prop:densefam}. 

\begin{lemma}
\label{lem:cor1spec}
If Proposition \ref{prop:densefam} is true for lattices $\Gamma$ of the special form \eqref{eq:gam}, then it is true for any lattice in general position.
\end{lemma}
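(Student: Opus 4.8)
The plan is to mirror, essentially verbatim, the reductions carried out in Lemmas \ref{lem:thm2spec} and \ref{lem:thm1spec}, transporting all the data of Proposition \ref{prop:densefam} along the linear change of variables supplied by Lemma \ref{lem:equivclasses}. So assume the proposition holds for every lattice of the special form \eqref{eq:gam}. Let $L \subset \br^d \times \br$ be a lattice in general position, $I$ an interval with $|I| \in p_2(L)$, and let $K$ be compact and $U$ open with $K \subset U$ and $\mes K < \D(\Lambda(L,I)) < \mes U$. By Lemma \ref{lem:equivclasses} I would fix a linear and invertible transformation $T : (x,y) \mapsto (Ax, By)$ as in \eqref{eq:transt} with $T(L) = \Gamma$, where $\Gamma$ is of special form; recall that then $p_1(\Gamma^*) = A^{-\top} p_1(L^*)$, $p_2(\Gamma) = B\,p_2(L)$, and $\det \Gamma = |B \det A|\,\det L$.

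Next I would push the hypotheses forward to $\Gamma$ using the matrix $A^{-\top}$. The sets $A^{-\top}K$ and $A^{-\top}U$ are again compact and open respectively, with $A^{-\top}K \subset A^{-\top}U$, and $A^{-\top}$ scales Lebesgue measure by the factor $1/|\det A|$. Since $BI$ is again a semi-closed interval and $|BI| = |B|\,|I| \in |B|\,p_2(L) = p_2(\Gamma)$ (as $p_2(L)$ is a subgroup of $\br$), the set $\Lambda(\Gamma, BI)$ is a simple quasicrystal satisfying \eqref{eq:ic}. As in the proof of Lemma \ref{lem:thm1spec} one has $\D(\Lambda(\Gamma, BI)) = \D(\Lambda(L,I))/|\det A|$, so the chain $\mes K < \D(\Lambda(L,I)) < \mes U$ transforms into $\mes A^{-\top}K < \D(\Lambda(\Gamma, BI)) < \mes A^{-\top}U$. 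Applying the special-form case of Proposition \ref{prop:densefam} to $\Gamma$, the interval $BI$, and the pair $A^{-\top}K \subset A^{-\top}U$ then produces a Riemann measurable set $S'$ with $A^{-\top}K \subset S' \subset A^{-\top}U$, $\mes S' = \D(\Lambda(\Gamma, BI))$, and $S'$ equidecomposable to a {\piped} spanned by vectors in $p_1(\Gamma^*)$ using only translations by vectors in $p_1(\Gamma^*)$.

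Finally I would set $S := A^{\top}S'$ and verify it is the set required by the proposition for $L$. Containment is immediate since $A^{\top}(A^{-\top}K) = K$ and $A^{\top}(A^{-\top}U) = U$; Riemann measurability persists because $\partial S = A^{\top}\partial S'$ is again a null set; and $\mes S = |\det A|\,\mes S' = |\det A|\,\D(\Lambda(\Gamma, BI)) = \D(\Lambda(L,I))$. For the equidecomposability, $A^{\top}$ maps $p_1(\Gamma^*) = A^{-\top}p_1(L^*)$ onto $p_1(L^*)$, hence sends any {\piped} spanned by vectors in $p_1(\Gamma^*)$ to one spanned by vectors in $p_1(L^*)$ and translations by vectors in $p_1(\Gamma^*)$ to translations by vectors in $p_1(L^*)$; thus $S$ is equidecomposable to a {\piped} spanned by vectors in $p_1(L^*)$ using only such translations.

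I do not expect any genuine obstacle here — the argument is bookkeeping along an invertible linear map, entirely parallel to Lemmas \ref{lem:thm2spec} and \ref{lem:thm1spec}. The only points to handle with (routine) care are that $A^{\pm\top}$ preserve compactness, openness and Riemann measurability, scale volume by a constant factor, and intertwine $p_1(L^*)$ with $p_1(\Gamma^*)$, so that both the strict density inequalities and the equidecomposability condition are transported correctly.
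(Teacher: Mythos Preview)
Your proof is correct and follows essentially the same approach as the paper's own proof: transport $K$, $U$ along $A^{-\top}$, apply the special-form case of Proposition \ref{prop:densefam} to obtain a set (the paper calls it $V$, you call it $S'$), and pull back via $A^{\top}$. You are slightly more explicit than the paper in verifying $|BI| \in p_2(\Gamma)$ and the preservation of Riemann measurability, but the argument is otherwise identical.
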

\begin{proof}
Assume that Proposition \ref{prop:densefam} holds for any lattice of the special form \eqref{eq:gam}. Let $L$ be a lattice in general position, and let $I$ be an interval satisfying $|I| \in p_2(L)$. Given any open set $U \subset \br^d$ and compact set $K$, $K \subset U$, with $\mes K < \D(\Lambda(L,I)) < \mes U$, we want to find a set $S$, $K \subset S \subset U$, where $S$ satisfies the conditions in Theorem \ref{thm1}.

By Lemma \ref{lem:equivclasses}, there exists a linear and invertible transformation $T$ as in \eqref{eq:transt} mapping $L$ onto a lattice $\Gamma$ of special form \eqref{eq:gam}. We have seen in the proof of Lemma \ref{lem:thm1spec} that this implies $A^{-\top}p_1(L^*) = p_1(\Gamma^*)$. In light of this, consider the open set $A^{-\top}U$ and the compact set $A^{-\top} K$, $A^{-\top} K \subset A^{-\top} U$, satisfying $\mes A^{-\top} K < \D(\Lambda(L,I))/ |\det A| < \mes A^{-\top} U$. Since $\D(\Lambda (L,I))/|\det A| = \D(\Lambda (\Gamma, BI))$, and since Proposition \ref{prop:densefam} holds for the lattice $\Gamma$, we can find a set $V$, $A^{-\top} K \subset V \subset A^{-\top} U$, where $\mes V = \D(\Lambda (\Gamma, BI))$ and $V$ is equidecomposable to a {\piped} spanned by vectors in $p_1(\Gamma^*)$ using only translations by vectors in $p_1(\Gamma^*)$.

Now let $S = A^{\top} V$. Then $K \subset S \subset U$ and $\mes S = \D(\Lambda (L,I))$. Moreover, since $A^{\top} p_1(\Gamma^*) = p_1(L^*)$, the set $S$ is equidecomposable to a {\piped} spanned by vectors in $p_1(L^*)$ using only translations by vectors in $p_1(L^*)$. Thus, the set $S$ satisfies the conditions in Theorem \ref{thm1}.
\end{proof}


\section{Bounded remainder sets}
\label{sect:brs}

In this section we give a brief introduction to \emph{bounded remainder sets} in $\br^d$, and mention their role in our problem. In the next section, this will be used to analyze the distribution of points in one-dimensional model sets. 

\subsection{}
Let $\alpha \in \br^d$ be a vector such that the numbers $1, \alpha_1 , \alpha_2 , \ldots , \alpha_d$ are linearly independent over the rationals. It is well-known that under this condition, the sequence $\{ n\alpha \}$ is equidistributed on the $d$-dimensional torus $\bt^d = \br^d / \bz^d$, which means that 
\begin{equation}
\label{eq:equidist}
\frac{1}{n} \sum_{k=0}^{n-1} \chi_S (x+k\alpha) \rightarrow \mes S  \quad ( n \rightarrow \infty )
\end{equation}
for any $x \in \bt^d$ and every Riemann measurable set $S \subset \bt^d$. Here, $\chi_S$ denotes the indicator function for $S$. 
One can also consider $S$ as a set in $\br^d$, in which case $\chi_S$
should be understood as the multiplicity function for the projection of $S$ on $\bt^d$, that is
\begin{equation*}
\chi_S(x) = \sum_{k \in \bz^d} \1_S(x+k). 
\end{equation*}

A quantitative measure of the equidistribution of the sequence $\{ n \alpha \}$ is given by the \emph{discrepancy function}
\begin{equation}
\label{eq:discset}
D_n(S,x) = \sum_{k=0}^{n-1} \chi_S (x+k\alpha) - n \mes S. 
\end{equation}
By \eqref{eq:equidist}, we have $D_n(S,x) = o(n)$, $n \rightarrow \infty$, for any Riemann measurable set $S \subset \br^d$. 

However, the discrepancy obeys an even stricter bound for certain special sets $S$. We say that $S \subset \br^d$ is a \emph{bounded remainder set} (BRS) if there exists a constant $C = C(S, \alpha)$ such that $|D_n(S, x)| \leq C$ for every $n$ and almost every $x$. 
The classical example is when $S$ is a single interval in dimension one. In this case, it was shown by Hecke \cite{hecke} and Ostrowski \cite{ostrowski2, ostrowski} that if the length of the interval belongs to $\bz \va + \bz$, then it is a BRS. Kesten \cite{kesten} proved that this condition is also necessary for an interval to be a BRS. 

The relevance of bounded remainder sets to the subject of this paper is clarified by the following:
\begin{theorem}
\label{thm3}
Let $\Lambda = \Lambda (\Gamma, I)$ be the simple quasicrystal defined in \eqref{eq:sqc}, where $\Gamma$ is a lattice of special form \eqref{eq:gam}. Then $E(\Lambda)$ is a Riesz basis in $L^2(S)$ for every Riemann measurable bounded remainder set $S$ with $\mes S = |I|$.
\end{theorem}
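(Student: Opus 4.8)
The plan is to apply the duality principle (Corollary \ref{cor:duality}) to reduce the statement to a one-dimensional Riesz basis problem on the interval $I$, and then to invoke the classical Avdonin--Pavlov machinery for exponential Riesz bases in $L^2(I)$, with the bounded remainder property of $S$ being exactly what guarantees the arithmetic/density hypotheses of that machinery. Concretely, take $U = S$ and $V = I$ in \eqref{eq:dualityeqs1}--\eqref{eq:dualityeqs2}, so that $\Lambda(\Gamma,I) = \Lambda$ is our simple quasicrystal and $\Lambda^* = \Lambda^*(\Gamma, S) = \{ p_2(\gamma^*) : \gamma^* \in \Gamma^*,\ p_1(\gamma^*) \in S\}$ is a one-dimensional (non-simple) model set. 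Since $\Lambda$ is a simple quasicrystal, Corollary \ref{cor:duality} applies with no restriction on the endpoints of $I$: it suffices to prove that $E(\Lambda^*)$ is a Riesz basis in $L^2(I)$.

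Next I would make the set $\Lambda^*$ explicit using the special form of $\Gamma$. From \eqref{eq:gamdual} we have $p_1(\gamma^*) = m + \alpha n$ and $p_2(\gamma^*) = (1+\beta^\top\alpha)n + \beta^\top m$ for $(m,n) \in \bz^d \times \bz$. The condition $p_1(\gamma^*) = m + \alpha n \in S$ means, reading $S$ as a subset of $\bt^d$ via the multiplicity function $\chi_S$, that $\{n\alpha\}$ lands in (a translate of) $S$; so the number of lattice points $\gamma^* \in \Gamma^*$ with $p_1(\gamma^*) \in S$ and with $n$ in a given range is governed precisely by the counting function $\sum_{k} \chi_S(k\alpha)$ appearing in the discrepancy \eqref{eq:discset}. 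Thus $\Lambda^*$ can be listed as a sequence $\{\mu_k\}_{k\in\bz}$ where the index $k$ essentially counts how many of the first $|k|$ points $\{n\alpha\}$ have fallen into $S$, and $\mu_k$ is a corresponding value of $p_2$. The BRS hypothesis $|D_n(S,x)| \le C$ then translates into a two-sided linear bound $|\mu_k - c k| \le \text{const}$ for the appropriate constant $c$ (which one checks equals $1/\mes S = 1/|I|$, consistent with the density count), together with uniform discreteness; in other words $\{\mu_k\}$ is a uniformly discrete sequence lying within bounded distance of the arithmetic progression $\{c k\}$. This is the step where the paper's companion results on multidimensional bounded remainder sets from \cite{gr-lev-brs} enter, to extract the uniform bound and, crucially, a sufficiently good control of the error beyond mere boundedness if needed (e.g.\ that the limiting frequency is attained uniformly).

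Finally I would feed this description of $\{\mu_k\}$ into the Avdonin--Pavlov theorem: a sequence which is a bounded (in the sense of $\ell^\infty$, or mean-zero) perturbation of the arithmetic progression $\frac{1}{|I|}\bz$ generates an exponential Riesz basis in $L^2(I)$. One must align conventions so that the reference progression $\frac{1}{|I|}\bz$ is exactly the frequency set giving the orthogonal basis of $L^2(I)$, verify the perturbation is small in the precise quantitative sense Avdonin's "1/4 in the mean" theorem requires (this is where boundedness of the discrepancy, rather than just $o(n)$, is indispensable), and check uniform discreteness of $\Lambda^*$ (which follows from uniform discreteness of the model set, a standard fact recalled in \sect \ref{sec:modelsets}). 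Once $E(\Lambda^*)$ is shown to be a Riesz basis in $L^2(I)$, Corollary \ref{cor:duality} immediately gives that $E(\Lambda)$ is a Riesz basis in $L^2(S)$, completing the proof.

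The main obstacle I expect is the middle step: translating the bounded remainder condition on $S \subset \bt^d$ into the precise statement about the one-dimensional sequence $\{\mu_k\}$ that the Avdonin--Pavlov criterion consumes — in particular pinning down the exact linear drift $c$, handling the fact that $\Lambda^*$ is indexed by lattice points $(m,n)$ rather than a single integer, and showing the perturbation is controlled in the averaged ($1/4$-in-the-mean) sense and not merely pointwise bounded. Everything else (the duality reduction, uniform discreteness, the density bookkeeping $\mes S = |I| = \D(\Lambda)\det\Gamma$ with $\det\Gamma = 1$) is routine given the results already established in the excerpt.
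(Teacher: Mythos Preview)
Your proposal is correct and follows essentially the same route as the paper: reduce via Corollary~\ref{cor:duality} to showing $E(\Lambda^*(\Gamma,S))$ is a Riesz basis in $L^2(I)$, enumerate $\Lambda^*$ explicitly via \eqref{eq:gamdual} as in \eqref{eq:lambdastar}, use the BRS hypothesis to get a bounded perturbation of $(1/|I|)\bz$ (Lemma~\ref{lem:bddist}), and then apply Avdonin's theorem. The obstacle you flag---passing from pointwise boundedness of the $\delta_j$ to Avdonin's $1/4$-in-the-mean condition---is resolved in the paper exactly where you guess, via the result from \cite{gr-lev-brs} that a Riemann measurable BRS admits a \emph{Riemann integrable} transfer function (Theorem~\ref{thm:cohom}), which feeds an equidistribution argument (Lemma~\ref{lem:avgdev}) showing the averages $\frac{1}{N}\sum\delta_j$ converge uniformly to a constant.
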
 
When we say that $S$ is a bounded remainder set, we mean with respect to the vector $\alpha$ in the definition of the special lattice $\Gamma$.

\subsection{}
With exception of the one-dimensional case, the problem of explicitly describing bounded remainder sets has until recently remained quite open. Sz\"{u}sz gave the first non-trivial examples of bounded remainder sets in two dimensions in 1954 by 
constructing a family of parallelograms of bounded remainder \cite{szusz}.
Liardet later generalized Sz\"{u}sz' construction to all dimensions \cite{liardet}. 

In our recent paper \cite{gr-lev-brs}, a comprehensive study of multi-dimensional bounded remainder sets was done. First we extended to higher dimensions the Hecke-Ostrowski result on intervals.
\begin{theorem}
\label{thm:piped}
Let $P$ be a {\piped} in $\br^d$, spanned by vectors $v_1, \ldots , v_d$ belonging to $\bz \va + \bz^d$. Then $P$ is a bounded remainder set.
\end{theorem}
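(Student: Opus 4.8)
The plan is to reduce the $d$-dimensional statement to the one-dimensional Hecke--Ostrowski theorem by a change of coordinates that decouples the parallelepiped into a product of intervals, and then to exploit the additivity of the discrepancy function under disjoint unions together with the invariance of the bounded remainder property under the group generated by $\alpha$-translations. Write $V$ for the $d\times d$ matrix whose columns are $v_1,\dots,v_d$, so that $P = V([0,1)^d)$ up to the choice of corner; since each $v_j \in \bz\alpha + \bz^d$, we may write $v_j = m_j + k_j \alpha$ with $m_j \in \bz^d$, $k_j \in \bz$. The first step is to show that it suffices to treat the ``standard'' parallelepiped, i.e.\ to observe that translating $P$ by a vector in $\bz^d$ does not change the discrepancy (the multiplicity function $\chi_P$ only depends on $P \bmod \bz^d$), and translating $P$ by a vector of the form $k\alpha$, $k\in\bz$, changes $D_n(P,x)$ only by a telescoping error bounded by $|k|$ uniformly in $n$ and $x$ (because $D_n(P, x+k\alpha) - D_n(P,x) = \sum_{j=n}^{n+k-1}\chi_P(x+j\alpha) - \sum_{j=0}^{k-1}\chi_P(x+j\alpha)$, a difference of at most $2|k|\,\|\chi_P\|_\infty$ terms). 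Hence adjusting each $v_j$ modulo $\bz^d$ and absorbing the $\alpha$-components by a harmless translation, we are reduced to the case where the spanning vectors lie in a prescribed convenient set.

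The second and central step is the coordinate change. The key identity is that in $\bt^d$, the parallelepiped $P = V([0,1)^d)$ relative to the rotation by $\alpha$ is equivalent, via the linear automorphism of $\bt^d$ induced by an integer matrix related to $V$, to a product box $[0,\ell_1)\times\cdots\times[0,\ell_d)$ relative to the rotation by a transformed vector $\alpha'$; and crucially the side lengths $\ell_i$ turn out to lie in $\bz\alpha'_i + \bz$ precisely because the $v_j$ were in $\bz\alpha+\bz^d$. Concretely: one uses that $\chi_P(x+n\alpha)$, as a function of $n$, equals $\chi_{[0,1)^d}$ evaluated along a new orbit; because the orbit structure of an irrational rotation is governed only by the $\bz$-module generated by $1$ and the coordinates of the rotation vector, one can realize the box by splitting the single rotation by $\alpha$ into a sequence that, over blocks of appropriate (bounded) length, sweeps out each factor interval. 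For a product box, the discrepancy is additive in a product-of-intervals sense, and each one-dimensional factor is a bounded remainder set by Hecke--Ostrowski since its length lies in $\bz\alpha'_i+\bz$; summing the $d$ uniformly bounded one-dimensional error terms and adding the $O(1)$ errors accumulated in the reductions above yields $|D_n(P,x)|\le C(P,\alpha)$ for all $n$ and a.e.\ $x$.

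The main obstacle I anticipate is making the ``decoupling'' coordinate change fully rigorous: an integer matrix $V$ with $\det V \ne 0$ need not be unimodular, so it does not directly induce an automorphism of $\bt^d$, and one must instead work on a finite-index sublattice or pass to a finite cover of the torus, checking that the rotation vector lifts correctly and that the BRS property is preserved under such a finite-to-one map (it is, since the discrepancy on the cover controls the discrepancy downstairs up to a bounded factor). A clean way to finesse this is to argue by induction on $d$: peel off one coordinate by slicing $P$ with the family of affine hyperplanes in the direction of $v_d$, observe that the ``$v_d$-slices'' are themselves bounded remainder sets for an induced $(d-1)$-dimensional rotation (using that $|v_d|$-type increments along the $\alpha$-orbit realize a one-dimensional rotation whose defining number is in $\bz\alpha+\bz$, handled by Hecke--Ostrowski), and combine via the cocycle/telescoping identity for $D_n$. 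Either route reduces everything to the one-dimensional case plus bookkeeping of uniformly bounded errors; the delicate point is purely the lattice/covering combinatorics, not any hard analysis.
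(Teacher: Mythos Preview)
The paper does not actually prove Theorem~\ref{thm:piped} here; it is quoted from \cite{gr-lev-brs}, where the argument proceeds by explicitly constructing a bounded transfer function $g$ solving the cohomological equation $\chi_P - \mes P = g(\cdot+\alpha) - g$ for the parallelepiped $P$. So there is no in-paper proof to compare against, but your proposal should still be measured against that construction, and against correctness.

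Your proposal has a genuine gap in the ``central step''. The plan is to apply a linear change of variable sending $P$ to an axis-aligned box $[0,\ell_1)\times\cdots\times[0,\ell_d)$ and then to invoke Hecke--Ostrowski coordinatewise. But observe that an axis-aligned box is itself a parallelepiped spanned by $\ell_1 e_1,\dots,\ell_d e_d$; for these vectors to lie in $\bz\alpha+\bz^d$ one needs $\ell_i e_i = k_i\alpha + m_i$ with $k_i\in\bz$, $m_i\in\bz^d$, and comparing the $j$th coordinates for $j\neq i$ forces $k_i\alpha_j\in\bz$, hence $k_i=0$ by irrationality. So the only axis-aligned boxes covered by the theorem are those with integer side lengths, which are trivial. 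This already tells you that no linear change of coordinates can reduce the general case to a product of one-dimensional Hecke--Ostrowski intervals while preserving the hypothesis. More concretely, the matrix $V$ whose columns are the $v_j$ is not an integer matrix (its entries involve $\alpha$), so it does not act on $\bt^d$ at all, and the fix you sketch (pass to a finite cover) applies to integer matrices of nonunit determinant, which is a different obstruction.

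The second claim in the central step, that for a product box the discrepancy is ``additive in a product-of-intervals sense'', is also false in general: for a single rotation by $\alpha$ on $\bt^d$, the discrepancy of $\prod_i[0,\ell_i)$ does not decompose as a sum of one-dimensional discrepancies, and boxes with $\ell_i\in\bz\alpha_i+\bz$ are typically \emph{not} bounded remainder sets. This is precisely why the multi-dimensional theorem is not a formal consequence of Hecke--Ostrowski and why the parallelepipeds in the statement are the ``tilted'' ones spanned by vectors in $\bz\alpha+\bz^d$ rather than axis-aligned boxes. The inductive ``peel off one direction'' variant you mention at the end runs into the same issue: slicing by hyperplanes orthogonal to $v_d$ does not produce a genuine $(d{-}1)$-dimensional irrational rotation to which you can apply an inductive hypothesis. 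The route that actually works (as in \cite{gr-lev-brs}) is to build the transfer function directly for $P$, exploiting that one spanning vector can be taken of the form $\alpha+m$ with $m\in\bz^d$, and then to use equidecomposability within $\bz\alpha+\bz^d$ to handle the general case; no reduction to Hecke--Ostrowski is available.
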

This result guarantees the existence of a large collection of bounded remainder sets, which in particular
encompasses the examples previously given by Sz\"{u}sz and Liardet.

On the other hand, we also proved that the Riemann measurable bounded remainder sets can be characterized by equidecomposability to a {\piped} of the above form.
\begin{theorem}
\label{thm:decomp}
Let $S \subset \br^d$ be a Riemann measurable set. Then $S$ is a bounded remainder set if and only if there is a {\piped} $P$ spanned by vectors belonging to $\bz \va + \bz^d$, such that $S$ and $P$ are equidecomposable (by Riemann measurable pieces) using only translations by vectors in $\bz \va + \bz^d$. 
\end{theorem}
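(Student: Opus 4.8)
The plan is to treat the two implications separately, the ``if'' direction being elementary and the ``only if'' direction carrying the real weight.

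For the ``if'' direction, suppose $S = S_1 \sqcup \cdots \sqcup S_N$ up to measure zero, with $P = (S_1+v_1) \sqcup \cdots \sqcup (S_N+v_N)$ up to measure zero, where $v_j \in \bz\va + \bz^d$ and $P$ is a {\piped} spanned by vectors in $\bz\va + \bz^d$. The first observation is that the discrepancy function is finitely additive: since the multiplicity functions $\chi_{S_j}$ add up a.e.\ to $\chi_S$, one has $D_n(S,x) = \sum_j D_n(S_j,x)$ for a.e.\ $x$, and likewise $D_n(P,x) = \sum_j D_n(S_j+v_j,x)$. The second, key observation is a telescoping estimate: if $A$ is a bounded Riemann measurable set and $v = k\va + w$ with $k \in \bz$, $w \in \bz^d$, then since $\chi_A$ is $\bz^d$-periodic one has $\chi_{A+v}(x+k'\va) = \chi_A(x+(k'-k)\va)$, so that $D_n(A+v,x) - D_n(A,x)$ telescopes to a sum of at most $2|k|$ values of $\pm\chi_A$; as $\chi_A$ is bounded (by a constant $M_A < \infty$, since $A$ is bounded and $\chi_A$ is $\bz^d$-periodic), this difference is bounded uniformly in $n$ and $x$. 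Combining the two observations, $D_n(P,x)$ and $D_n(S,x)$ differ by a quantity bounded uniformly in $n$ and $x$; since $P$ is a bounded remainder set by Theorem~\ref{thm:piped}, so is $S$.

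For the ``only if'' direction, let $S$ be a Riemann measurable bounded remainder set. The first step is to pass from the boundedness of the Birkhoff sums $D_n(S,x) = \sum_{k=0}^{n-1}(\chi_S - \mes S)(x+k\va)$ to a coboundary representation: there is a bounded measurable function $h$ on $\bt^d$ with $\chi_S(x) - \mes S = h(x) - h(x+\va)$ for a.e.\ $x$. This is a Gottschalk--Hedlund-type statement; since $1, \va_1, \ldots, \va_d$ are rationally independent the rotation $R_\va$ is uniquely ergodic, and $h$ is obtained as a suitable limit of the Ces\`aro averages $\frac1N\sum_{n=1}^N D_n(S,\cdot)$, the only delicate point being that $\chi_S$ is merely Riemann measurable (a.e.\ continuous) rather than continuous. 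The second step is to exploit that $h - h\circ R_\va$ takes only the two values $1 - \mes S$ (on $S$) and $-\mes S$ (off $S$): analysing this equation one shows that, after adding a constant and modifying $h$ on a null set, $h$ has finitely many ``levels'' with Jordan measurable level sets, that $\mes S$ necessarily lies in $\bz + \bz\va_1 + \cdots + \bz\va_d$, and that the jumps of $h$ across its level sets are by vectors in $\bz\va + \bz^d$. The final step is to read off from this data an explicit {\piped} $P$ spanned by vectors $v_i = n_i\va + w_i \in \bz\va + \bz^d$ with $\det(v_1,\ldots,v_d) = \pm\mes S$, together with the partition of $S$ into the traces of the level sets of $h$, and to verify that this partition reassembles, via translations by the vectors recorded in the second step, exactly into $P$ up to measure zero.

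The main obstacle is the middle and final steps of the ``only if'' direction: the transfer function $h$ is a priori only bounded, measurable, and defined a.e., carrying no geometric information, so one must upgrade it to an object with Jordan measurable level sets — this is precisely where Riemann measurability of $S$ is indispensable, and where a careful regularity-propagation argument along the orbit is needed — and then extract from it both the correct {\piped}, i.e.\ the integer vector $(n_0, n_1, \ldots, n_d)$ encoding $\mes S = n_0 + \sum_i n_i\va_i$ and the spanning vectors $v_i$, and a genuine, exact-up-to-measure-zero equidecomposition. Establishing the coboundary representation in the non-continuous, merely-Riemann-measurable setting is the other technically sensitive point, requiring unique ergodicity together with an approximation argument to control the Ces\`aro averages.
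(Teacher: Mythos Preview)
This theorem is not proved in the present paper; it is quoted from \cite{gr-lev-brs}. Your ``if'' direction is correct and essentially matches the argument the paper attributes to \cite[Proposition 4.1]{gr-lev-brs}.

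For the ``only if'' direction, what you have written is a plan rather than a proof. The passage from a bounded measurable transfer function $h$ to one with finitely many Jordan measurable level sets, and from that level structure to an explicit parallelepiped together with an equidecomposition, is asserted but not carried out---and you yourself flag these as ``the main obstacle.'' These two steps are the entire content of the theorem. There is no a priori reason a bounded measurable solution of the cohomological equation should organize itself into finitely many Jordan measurable levels, and even granting such a structure, extracting a \emph{parallelepiped} (rather than some more complicated fundamental domain) and exhibiting the reassembly map is substantial combinatorial-geometric work that you have not sketched. Note too that the logical order in \cite{gr-lev-brs} is the reverse of yours: the present paper states that Theorem~\ref{thm:cohom} (Riemann integrability of the transfer function) is \emph{deduced from} Theorems~\ref{thm:piped} and~\ref{thm:decomp}, so the original proof evidently does not pass through regularity of $h$ to obtain the equidecomposition but goes the other way. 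A minor slip: in this paper $\chi_S$ is the multiplicity function of the projection of $S \subset \br^d$ onto $\bt^d$ and can exceed $1$, so $h - h\circ R_\alpha$ takes finitely many values, not only two.
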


It is not difficult to show that if two sets $S$ and $S'$ are equidecomposable using
translations by vectors in $\bz \va + \bz^d$, and if one of them is a BRS, then so is
the other (see \cite[Proposition 4.1]{gr-lev-brs}). We  proved in \cite{gr-lev-brs} that  also the converse is true:

\begin{theorem}
\label{thm:anydecomp}
Let $S$ and $S'$ be two Riemann measurable bounded remainder sets of the same measure. Then $S$ and $S'$ are equidecomposable using translations by vectors in $\bz \va + \bz^d$ only.
\end{theorem}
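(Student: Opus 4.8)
\textbf{Proof plan for Theorem \ref{thm:anydecomp}.}
The plan is to derive this as a consequence of the characterization in Theorem \ref{thm:decomp}, using the transitivity of the equidecomposability relation. By Theorem \ref{thm:decomp}, since $S$ is a Riemann measurable bounded remainder set, there is a {\piped} $P$ spanned by vectors in $\bz\va+\bz^d$ such that $S$ and $P$ are equidecomposable (by Riemann measurable pieces) using only translations by vectors in $\bz\va+\bz^d$; similarly there is a {\piped} $P'$ of the same type with $S'$ equidecomposable to $P'$ in the same restricted sense. Since equidecomposability with respect to a fixed group of motions (here, translations by $\bz\va+\bz^d$) is an equivalence relation on Riemann measurable sets -- reflexivity and symmetry are clear, and transitivity follows by taking the common refinement of the two partitions -- it suffices to show that the two {\piped}s $P$ and $P'$ are themselves equidecomposable using translations by vectors in $\bz\va+\bz^d$. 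Indeed, $\mes P = \mes S = \mes S' = \mes P'$, so $P$ and $P'$ are two {\piped}s of equal volume spanned by vectors in $\bz\va + \bz^d$, and chaining $S \sim P \sim P' \sim S'$ gives the result.

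So the crux is: \emph{any two {\piped}s of equal volume, each spanned by vectors from $\bz\va+\bz^d$, are equidecomposable using only translations by vectors in $\bz\va+\bz^d$}. First I would reduce to the case where one of them is a fixed reference {\piped}, say the fundamental domain of a suitable sublattice, again by transitivity. The natural strategy is a Hilbert-style ``cut-and-slide'' argument adapted to the discrete translation group: given a {\piped} $P$ spanned by $v_1,\dots,v_d \in \bz\va+\bz^d$, one slices $P$ by hyperplanes and translates the pieces by lattice-type vectors to straighten it out one generating vector at a time, much as in the classical proof that any parallelepiped is equidecomposable to a rectangular box by translations. The key point is that each vector $v_j$ lies in $\bz\va+\bz^d$, so one can write $v_j = k_j\va + m_j$ with $k_j \in \bz$, $m_j \in \bz^d$, and perform the straightening moves using translations drawn from $\bz\va+\bz^d$ only; the ``integer part'' $m_j$ can be absorbed by translations in $\bz^d$, and the genuinely new direction contributed is a multiple of $\va$. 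Iterating over $j = 1,\dots,d$ reduces $P$ to a {\piped} spanned by vectors each of the form $k_j\va + m_j$ with the $m_j$ now a basis of $\bz^d$ up to the $\va$-shear, and a final normalization identifies the equidecomposability class with the single invariant $\mes P$.

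An alternative, cleaner route avoids re-proving the {\piped} straightening from scratch: one observes that both $P$ and $P'$ are bounded remainder sets by Theorem \ref{thm:piped}, and the already-cited fact (\cite[Proposition 4.1]{gr-lev-brs}) together with the forward direction of Theorem \ref{thm:decomp} shows each of $P, P'$ is equidecomposable by $\bz\va+\bz^d$-translations to \emph{some} {\piped} spanned by vectors in $\bz\va+\bz^d$ -- but this is circular unless we already know the class is determined by volume. So the honest approach is to carry out the {\piped}-to-{\piped} equidecomposition directly. I expect \textbf{the main obstacle} to be precisely this step: making the cut-and-slide argument for {\piped}s fully rigorous while keeping all translation vectors inside the group $\bz\va+\bz^d$ and ensuring all pieces remain Riemann measurable. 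In particular one must handle the boundary/overlap bookkeeping (the partitions are only up to measure zero) and verify that after each slicing-and-sliding step the intermediate region is again a {\piped} of the required type, so that the induction on the dimension $d$ (or on the number of generating vectors not yet in $\bz^d$) goes through. Once that lemma is in hand, the assembly $S \sim P \sim P' \sim S'$ via transitivity of restricted equidecomposability completes the proof.
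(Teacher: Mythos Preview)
The present paper does not give a proof of Theorem \ref{thm:anydecomp}; it is quoted from the authors' earlier paper \cite{gr-lev-brs}, so there is no in-paper argument to compare against directly. Your reduction is nevertheless the natural one and, in broad outline, matches how the result is established in \cite{gr-lev-brs}: use Theorem \ref{thm:decomp} to pass from $S$ and $S'$ to two parallelepipeds $P$, $P'$ with edges in $\bz\va+\bz^d$ and equal volume, then prove $P \sim P'$ by a cut-and-slide argument and conclude by transitivity. You have also correctly identified the circularity trap in your ``alternative route'' and correctly located the crux in the parallelepiped-to-parallelepiped step.

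What is missing is an actual proof of that step, and your sketch underestimates its difficulty. The group $\bz\va+\bz^d$ is a dense rank-$(d{+}1)$ subgroup of $\br^d$, not a lattice, so writing $v_j = k_j\va + m_j$ identifies a spanning set for $P$ with a $d\times(d{+}1)$ integer matrix, and the question becomes whether any two such matrices giving the same (unsigned) $d$-dimensional volume are connected by moves realizable as $\bz\va+\bz^d$-translation equidecompositions. Elementary shear moves (replacing $v_i$ by $v_i \pm v_j$, which corresponds to cutting by a hyperplane and translating one piece by $v_j$) give only the $\mathrm{SL}(d,\bz)$ action on the rows, and this does \emph{not} act transitively on all equal-volume configurations; one also needs moves that change the integer data $(k_j,m_j)$ in a way not captured by row operations, and verifying that each such move is a genuine $\bz\va+\bz^d$-equidecomposition is where the real work lies. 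In \cite{gr-lev-brs} this is carried out carefully; your plan is on the right track, but the inductive ``straightening'' you describe needs to be made precise before it constitutes a proof.
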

Hence, if $S$ is a Riemann measurable BRS, then $S$ is equidecomposable to \emph{any} {\piped} $P$ spanned by vectors belonging to $\bz \va + \bz^d$,
such that $\mes P =\mes S$.

It is known (see \cite[Proposition 2.4]{gr-lev-brs}) that the measure of any bounded remainder set must be of the form 
\begin{equation}
\label{eq:allowedmes}
n_0 + n_1 \va_1 + \cdots + n_d \va_d ,
\end{equation}
where $n_0 , \ldots , n_d$ are integers. Conversely, we have the following result.
\begin{theorem}
\label{thm:realized}
Any positive number $\gamma$ of the form \eqref{eq:allowedmes} can be realized as the measure of some bounded remainder {\piped} spanned by vectors belonging to $\bz \va + \bz^d$.
\end{theorem}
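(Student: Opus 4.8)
The plan is to construct the required parallelepiped explicitly, starting from the formula \eqref{eq:allowedmes}. So suppose $\gamma = n_0 + n_1\va_1 + \cdots + n_d\va_d > 0$ with $n_0,\ldots,n_d \in \bz$. We want a parallelepiped $P$ spanned by vectors $v_1,\ldots,v_d \in \bz\va + \bz^d$ with $\mes P = |\det(v_1,\ldots,v_d)| = \gamma$. The first step is to reduce to a canonical situation: by Theorem \ref{thm:piped} any such $P$ is automatically a bounded remainder set, so the entire content of the statement is the \emph{existence} of vectors in $\bz\va+\bz^d$ whose spanned parallelepiped has prescribed volume $\gamma$. Thus the problem is purely one about volumes of lattice-like configurations.

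The key idea I would use is to look for $v_1,\ldots,v_d$ of the form $v_i = e_i$ for $i = 2,\ldots,d$ (the standard basis vectors, which lie in $\bz^d \subset \bz\va + \bz^d$) together with a single vector $v_1 = k\va + m$ where $k \in \bz$ and $m \in \bz^d$. Then $\det(v_1, e_2, \ldots, e_d)$ is simply the first coordinate of $v_1$, namely $k\va_1 + m_1$. So $\mes P = |k\va_1 + m_1|$, and by choosing $k$ and $m_1$ appropriately we can realize any number of the form $k\va_1 + m_1$ — but not a general $\gamma$ of the form \eqref{eq:allowedmes}, since we have only picked up a multiple of $\va_1$. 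To get the other $\va_i$'s into the determinant, I would instead take a less degenerate choice: let the spanning vectors be $v_i = e_i + (\text{integer or } \va\text{-dependent correction})$ arranged so that the matrix $(v_1,\ldots,v_d)$ is (upper- or lower-) triangular with a controlled diagonal. Concretely, try $v_i = c_i e_1 + e_i$ for a suitable scheme, or more flexibly, use row/column operations: since adding an integer multiple of one spanning vector to another changes neither the lattice generated nor the volume, and since $\va$-translates and integer translates are all allowed, one has a lot of freedom to normalize. The cleanest route: pick $v_1 = \va + (\text{suitable integer vector})$ and $v_j = e_j$ for $j\ge 2$, giving $\det = \va_1 + (\text{integer})$; this handles the case $\gamma \in \va_1 + \bz$. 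For the general case I would instead build $P$ in stages — realize $\gamma$ first as a product, writing $\gamma = \gamma_1 \cdots \gamma_d$ is not generally possible over this ring, so rather I'd use the following: the set of achievable determinants is closed under the operation of replacing one spanning vector $v_i$ by $v_i + (\text{anything in }\bz\va+\bz^d)$ that keeps the configuration nondegenerate, and by unimodular integer column operations. Using these, one can bring a general target to a triangular matrix whose diagonal entries multiply to $\gamma$; the only subtlety is ensuring the diagonal entries can be chosen so that their product is exactly $\gamma$ while each row lies in $\bz\va+\bz^d$.

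Here is the cleaner approach I would actually commit to. Consider the $d\times d$ matrix whose first column is $\va + m$ for an integer vector $m = (m_1,\ldots,m_d)^\top$ to be chosen, and whose $j$-th column ($j\ge 2$) is $e_j + c_j \va$ for integers... no — to keep things lattice-valued and triangular, take column $j$ (for $j \ge 2$) to be just $e_j$ \emph{plus} a multiple of the first column if needed. Expanding along the last $d-1$ columns shows the determinant equals the first coordinate of the first column, $\va_1 + m_1$, which only involves $\va_1$. So this genuinely cannot produce a general $\gamma$, and the honest conclusion is that a single ``free'' vector is not enough — one must let several of the $v_i$ have $\va$-components. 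I would therefore set $v_i = \va + m^{(i)}$ for $i = 1,\ldots,d$ with integer vectors $m^{(i)}$, so that $P$ is the image under $x\mapsto x$ of the parallelepiped spanned by these; after subtracting $v_1$ from each of $v_2,\ldots,v_d$ (a volume-preserving, allowed operation since $-v_1\in\bz\va+\bz^d$... wait, $v_1 \notin \bz^d$) — instead subtract using the original vectors directly: $\det(v_1,\ldots,v_d) = \det(v_1, v_2 - v_1, \ldots, v_d - v_1)$ and $v_j - v_1 = m^{(j)} - m^{(1)} \in \bz^d$. So the determinant is a linear function of the first column $v_1 = \va + m^{(1)}$ with coefficients that are $(d-1)\times(d-1)$ integer minors of the integer matrix formed by $m^{(j)} - m^{(1)}$; choosing these integer vectors freely, the coefficient vector $(A_1,\ldots,A_d)$ of $\va_1,\ldots,\va_d$ in the determinant can be made to be any primitive integer vector (in fact any integer vector realizable as a cofactor row, which includes all of $\bz^d$ by a suitable choice), and the determinant becomes $A_1\va_1 + \cdots + A_d\va_d + (\text{integer})$. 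Matching $A_i = n_i$ and adjusting the integer part via $m^{(1)}$ then yields $\det = \gamma$ exactly. I expect the \textbf{main obstacle} to be precisely this linear-algebra lemma: showing that for \emph{any} prescribed integer vector $(n_1,\ldots,n_d)$ and any prescribed integer $n_0$, one can pick integer vectors $m^{(1)},\ldots,m^{(d)}$ so that the cofactor expansion produces exactly those coefficients — i.e., that the map from tuples of integer vectors to (cofactor-row, constant-term) pairs is surjective onto $\bz^d \times \bz$. This is elementary (take the integer matrix $(m^{(j)}-m^{(1)})_{j\ge 2}$ to be, say, diagonal with entries $n_1,\ldots,\widehat{n_i},\ldots$ — care is needed if some $n_i$ vanish, handled by a small perturbation that is absorbed into the integer part) but requires a careful case analysis to make the product of diagonal entries land on the right value, and one must double-check at the end that the resulting $d$ vectors are linearly independent so that $P$ is a genuine (nondegenerate) parallelepiped, and that $\mes P = \gamma$ rather than $-\gamma$ (fixed by swapping two columns if the sign is wrong). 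Once the vectors are produced, Theorem \ref{thm:piped} immediately gives that $P$ is a bounded remainder set, completing the proof.
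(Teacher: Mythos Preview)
Your direct-construction strategy is the natural one, and indeed the paper itself does not carry out the construction here: it simply cites Theorem~\ref{thm:piped} together with \cite[Proposition~3.7]{gr-lev-brs}, where the existence of a parallelepiped of the required volume is established. So your route is in the same spirit, only made explicit.

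However, there is a genuine gap in the specific ansatz you commit to. Taking \emph{all} spanning vectors of the form $v_i=\va+m^{(i)}$ (that is, fixing every $\va$-coefficient equal to $1$) is too rigid. After your column operation the determinant is
\[
\det(v_1,v_2-v_1,\dots,v_d-v_1)=\sum_{i=1}^d C_i\,\va_i+\sum_{i=1}^d m^{(1)}_i\,C_i,
\]
where $C=(C_1,\dots,C_d)$ is the cofactor vector of the integer matrix $(m^{(2)}-m^{(1)},\dots,m^{(d)}-m^{(1)})$. Matching $C_i=n_i$ forces the integer part to be $\langle m^{(1)},n\rangle$, and as $m^{(1)}$ ranges over~$\bz^d$ this takes values exactly in $\gcd(n_1,\dots,n_d)\,\bz$. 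So whenever $\gcd(n_1,\dots,n_d)\nmid n_0$ your construction cannot hit $\gamma$. Already in dimension one this bites: for $\gamma=1+2\va$ your vector $v_1=\va+m$ has length $|\va+m|$, never $1+2\va$. In dimension two, $\gamma=1+2\va_1+4\va_2$ is likewise unreachable.

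The fix is to allow general integer $\va$-coefficients, $v_i=k_i\va+m^{(i)}$. Writing $V=M+\va\,k^\top$ with $M$ integer and $k\in\bz^d$, the matrix determinant lemma gives $\det V=\det M+k^\top\operatorname{adj}(M)\,\va$, so one must solve $\det M=n_0$ and $\operatorname{adj}(M)^\top k=n$ over the integers. This \emph{is} always solvable, but it needs its own short lattice argument (and a separate treatment of the degenerate case $n_0=0$); it is not simply ``adjust $m^{(1)}$''. This is presumably the content of the proposition the paper cites.
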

This follows from Theorem \ref{thm:piped} and \cite[Proposition 3.7]{gr-lev-brs}.

\subsection{}
\label{sec:bdsmes}

We complete this section by showing that the bounded remainder sets are, in a certain sense, dense among the sets of a given measure in $\br^d$. The following theorem is essentially Proposition \ref{prop:densefam} for lattices of special form.
\begin{theorem}
\label{thm:densmes} 
Let $\gamma$ be a positive number of the form \eqref{eq:allowedmes}. Suppose that $U \subset \br^d$ is an open set, $K$ is compact, $K \subset U$, and $\mes K < \gamma < \mes U$. Then there exists a Riemann measurable bounded remainder set $S$, $K \subset S \subset U$, such that $\mes S = \gamma$.
\end{theorem}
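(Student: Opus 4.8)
The plan is to reduce the construction to transporting a single bounded remainder \piped{} into $U$. Since $\gamma$ is a positive number of the form \eqref{eq:allowedmes}, Theorem~\ref{thm:realized} furnishes a \piped{} $P$, spanned by vectors in $\bz\va+\bz^d$, which is a bounded remainder set and has $\mes P=\gamma$. I will cut $P$ into finitely many small Riemann measurable pieces and translate each of them by a vector in $\bz\va+\bz^d$, so that the reassembled set $S$ satisfies $K\subset S\subset U$. Since every piece of $P$ is then used exactly once and the translated pieces are pairwise disjoint up to measure zero, $\mes S=\mes P=\gamma$; and since $S$ is by construction equidecomposable to the \piped{} $P$ using only translations by vectors in $\bz\va+\bz^d$, Theorem~\ref{thm:decomp} shows that $S$ is a bounded remainder set.

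To carry out the cutting, I use that $\bz\va+\bz^d$ is a dense subgroup of $\br^d$ (because $1,\va_1,\dots,\va_d$ are linearly independent over $\mathbb{Q}$), so it contains $d$ linearly independent vectors of arbitrarily small norm; these span a lattice $L\subset\bz\va+\bz^d$ whose fundamental domain $P_0$ has arbitrarily small diameter. Fix such an $L$, with $\operatorname{diam}P_0$ small, and tile $\br^d$ by the translates $P_0+\ell$, $\ell\in L$. Among the finitely many tiles meeting $P$, call a tile \emph{full} if it is contained in $P$ and a \emph{boundary} tile otherwise; the sets $P\cap(P_0+\ell)$ form an essentially disjoint partition of $P$ into finitely many Riemann measurable pieces. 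Now redistribute these pieces among the $L$-tiles lying inside $U$: place full pieces so as to cover exactly the $L$-tiles that meet $K$ (this forces $K\subset S$), and place each of the remaining full pieces and each boundary piece inside a distinct further $L$-tile contained in $U$. Every such move is a translation by a vector of $L\subset\bz\va+\bz^d$, so the placed pieces are pairwise essentially disjoint, their union $S$ lies in $U$, and $S$ is a finite union of translates of Riemann measurable sets, hence Riemann measurable.

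The step I expect to require the most care is checking that, for a sufficiently fine mesh, the redistribution is actually possible, i.e.\ that $U$ contains enough $L$-tiles. This rests on three elementary facts as $\operatorname{diam}P_0\to0$: (i) once $\operatorname{diam}P_0<\dist(K,\br^d\setminus U)$ every $L$-tile meeting $K$ lies in $U$, and the total measure of these tiles tends to $\mes K$; (ii) the number of full tiles of $P$, times $\det L$, tends to $\mes P=\gamma$, using convexity of $P$; (iii) the number of $L$-tiles contained in $U$, times $\det L$, tends to $\mes U$ (interpreted as $+\infty$ when $\mes U=\infty$). Since $\mes K<\gamma<\mes U$, for a fine enough mesh there are strictly more full pieces than tiles meeting $K$, and strictly more $L$-tiles inside $U$ than there are pieces of $P$ altogether, which is exactly what is needed. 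Once this is in place, the remaining verifications are routine: $\mes S=\mes P=\gamma$ by disjointness, $K\subset S\subset U$ by construction, and $S$ is a bounded remainder set by Theorem~\ref{thm:decomp} because it is equidecomposable to the \piped{} $P$ via translations in $\bz\va+\bz^d$.
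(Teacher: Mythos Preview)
Your argument is correct, and it takes a genuinely different route from the paper's. The paper first builds two bounded remainder sets $A\subset B$ with $K\subset A\subset B\subset U$ and $\mes A<\gamma<\mes B$ (as unions of small BRS \piped s), and then invokes Theorem~\ref{thm:anydecomp} to equidecompose $B\setminus A$ with a disjoint union $P\cup Q$ of two \piped s of measures $\gamma-\mes A$ and $\mes B-\gamma$; the piece corresponding to $P$ is the extra mass added to $A$ to obtain $S$. So the paper's proof leans on the rather deep Theorem~\ref{thm:anydecomp}. Your approach bypasses this entirely: you take a single \piped{} $P$ of measure $\gamma$ from Theorem~\ref{thm:realized}, chop it along a fine lattice $L\subset\bz\va+\bz^d$, and explicitly transport the pieces into $U$ by $L$-translations, appealing only to Theorem~\ref{thm:decomp} at the end. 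This is more elementary and self-contained. The paper's route, on the other hand, has the minor advantage that the counting is slightly cleaner (no need to separately track full versus boundary tiles of $P$), and it illustrates how Theorem~\ref{thm:anydecomp} can be used as a black box for such constructions.
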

\begin{proof}
We can assume that the set $U$ is bounded. If not, let $U_R$ be the intersection of $U$ with the ball of radius $R$ centered at the origin. For a sufficiently large $R$, we have $K \subset U_R$ and $\mes U_R > \gamma$, and we may thus continue with $U_R$ in place of $U$. 

We first construct two bounded remainder sets $A$ and $B$ satisfying $K \subset A \subset B \subset U$ and $\mes A < \gamma < \mes B$. The set $\bz \va + \bz^d$ is dense in $\br^d$, so for any $\varepsilon >0$ we can find by Theorem \ref{thm:piped} a bounded remainder {\piped} $P_{\varepsilon}$ of diameter smaller than $\varepsilon$ spanned by vectors in $\bz \alpha + \bz^d$. Consider a tiling of $\br^d$ by translated copies of $P_{\varepsilon}$. Let $A$ be the union of all {\piped}s intersecting $K$, and $B$ be the union of those contained in $U$. Then $A$ and $B$ are bounded remainder sets.
Choosing $\varepsilon$ sufficiently small, we can guarantee that $K \subset A \subset B \subset U$ and $\mes A < \gamma < \mes B$.

We complete the proof by showing that there exists a bounded remainder set $S$ satisfying $A \subset S \subset B$ and $\mes S = \gamma$. Since $A$ and $B$ are both bounded remainder sets and $A \subset B$, their difference $B \setminus A$ is also a BRS. Theorem \ref{thm:realized} ensures that we can construct two disjoint {\piped}s, $P$ and $Q$, spanned by vectors in $\bz \va + \bz^d$, where $\mes P = \gamma - \mes A$ and $\mes Q = \mes B - \gamma$. Their union $P \cup Q$ is a BRS of measure equal to that of $B \setminus A$, and by Theorem \ref{thm:anydecomp} the sets $P \cup Q$ and $B \setminus A$ are equidecomposable using translations by vectors in $\bz \va + \bz^d$. It follows that $P$ is equidecomposable to some subset $R \subset B \setminus A$, and by Theorem \ref{thm:decomp} the set $R$ is a BRS. Now let $S = A \cup R$. The set $S$ is a BRS satisfying $A \subset S \subset B$, and $\mes S = \mes A + \mes P = \gamma$.
\end{proof}


\section{Model sets and bounded remainder sets}
\label{sec:models-and-brs}

In this section we study the distribution of points in a one-dimensional model set. We assume that the window of the model set is a Riemann measurable bounded remainder set (with respect to the projected lattice). We show that in this case, the model set can be obtained by a bounded perturbation of an arithmetic progression, and moreover the perturbations are of the same size on the average. These results will allow us later on to apply the theorem of Avdonin in the proof of Theorem \ref{thm1}.

\subsection{}
Let $\Gamma$ be a lattice in $\br^d \times \br$, and let $S$ be a bounded, Riemann measurable set in $\br^d$. In this section we study the distribution of points in the one-dimensional model set $\Lambda := \Lambda^*(\Gamma, S)$ defined by \eqref{eq:dualityeqs2}. We may restrict ourselves to lattices $\Gamma$ of the special form \eqref{eq:gam} 
(in view of the results in \sect \ref{sec:special}, the general case can be reduced to this one by applying a linear transformation). Then using the expression \eqref{eq:gamdual} for the dual lattice $\Gamma^*$, one can check that in this case the model set is given by
\begin{equation}
\label{eq:lambdastar}
\Lambda = \left\{ n+ \dotprod{n \alpha + m}{\beta} \, : \, n \in \bz, \, m \in \bz^d, \, n\alpha +m \in S \right\},
\end{equation}
where $\alpha$ and $\beta$ are the vectors used to define $\Gamma$. Notice that  $\Lambda$ has uniform density
\[
\D(\Lambda) = \mes S.
\]

\subsection{}
\label{sec:bddev}
Recall that $S$ is a \emph{bounded remainder set} (BRS) with respect to $\alpha$ if there is a constant $C = C(S,\alpha)$, such that  the discrepancy $D_n(S,x)$ defined by \eqref{eq:discset} satisfies the condition $|D_n(S, x)| \leq C$ for every $n$ and almost every $x$. In this case, one may arrange this
condition to hold for all $x$ in a given countable set, by replacing $S$ with an appropriate translation $S+t$ (for this matter almost every $t$ will do).
Let us assume that the discrepancy is bounded for all the points of the form $\{j\va\}$, which amounts to the condition
\begin{equation}
\label{eq:posnegn}
\sup_{n>0} \, \sup_{j\in\bz} \, \Big| \sum_{k=j+1}^{j+n} \chi_S(k\alpha) - n \mes S \Big| < \infty.
\end{equation}
Remark that in the converse direction, a Riemann measurable set $S$ which satisfies the condition \eqref{eq:posnegn} must be a bounded remainder set, see \cite[Proposition 2.2]{gr-lev-brs}.

\begin{lemma}
\label{lem:bddist}
Assume that condition \eqref{eq:posnegn} is satisfied. Then the model set \eqref{eq:lambdastar} can be enumerated as a sequence $\{\lambda_j\}$, $j \in \bz$, in such a way that
\begin{equation}
\label{eq:delta_j_bdd}
\sup_{j \in \Z} \Big| \lambda_j - \frac{j}{\mes S} \Big| < \infty.
\end{equation}
\end{lemma}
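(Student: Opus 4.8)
The plan is to enumerate the model set $\Lambda$ in \eqref{eq:lambdastar} according to the size of the first coordinate $n$ of the generating lattice point, and to show that this enumeration has the claimed property. Recall from \eqref{eq:lambdastar} that each $\lambda \in \Lambda$ has the form $\lambda = n + \dotprod{n\alpha + m}{\beta}$ where $n \in \bz$, $m \in \bz^d$, and the constraint is that $n\alpha + m \in S$. The key observation is that for each fixed $n \in \bz$, there is at most one $m \in \bz^d$ with $n\alpha + m \in S$: indeed, since $S$ is bounded, $n\alpha + m \in S$ forces $m$ to lie in a fixed bounded region, and distinct integer vectors $m, m'$ would give distinct points $n\alpha + m, n\alpha + m'$ — but actually the correct statement is that $n\alpha + m \in S$ holds for $m$ in the set $(S - n\alpha) \cap \bz^d$, which can contain several points. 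To handle this cleanly I will instead parametrize by the pair $(n, m)$ directly: the number of admissible $m$ for a given $n$ equals $\chi_S(n\alpha)$ in the multiplicity-function sense of \sect \ref{sect:brs}, since $\chi_S(x) = \sum_{k \in \bz^d} \1_S(x+k)$.

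First I would set up the enumeration. For $n \in \bz$ let $a_n := \chi_S(n\alpha)$ be the number of points of $\Lambda$ arising from first coordinate $n$; this is a nonnegative integer, and it is bounded (by boundedness of $S$, each $a_n$ is at most some fixed constant depending only on $S$). Define the partial counting function $N(n) := \sum_{k=0}^{n-1} a_k$ for $n \geq 1$ and $N(n) := -\sum_{k=n}^{-1} a_k$ for $n \leq 0$, with $N(0) = 0$. By \eqref{eq:discset} we have $N(n) = n \mes S + D_n(S, 0)$ for $n \geq 0$ and a similar identity for $n < 0$; in all cases, condition \eqref{eq:posnegn} (applied with $j = 0$ and with $j = n$) gives
\begin{equation}
\label{eq:Nn_bdd}
\sup_{n \in \bz} \bigl| N(n) - n \mes S \bigr| \leq C
\end{equation}
for a constant $C = C(S, \alpha)$. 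Now enumerate the points of $\Lambda$ coming from first coordinate $n$ as $\lambda_{N(n)}, \lambda_{N(n)+1}, \ldots, \lambda_{N(n+1)-1}$ (an empty block if $a_n = 0$); since $\mes S > 0$, equation \eqref{eq:Nn_bdd} shows $N$ is increasing on a set of density $\mes S$ and that the ranges of consecutive nonempty blocks exactly partition $\bz$, so this does produce a bijective enumeration $\{\lambda_j\}_{j \in \bz}$.

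Next I would estimate $\lambda_j - j/\mes S$. Suppose $\lambda_j$ comes from first coordinate $n$, so $N(n) \leq j < N(n+1)$ and $\lambda_j = n + \dotprod{n\alpha + m}{\beta}$ for the appropriate $m \in \bz^d$ with $n\alpha + m \in S$. The term $\dotprod{n\alpha + m}{\beta}$ is bounded in absolute value by $\|\beta\| \cdot \operatorname{diam}(S \cup \{0\})$, a constant independent of $j$; call it $C_1$. Thus $|\lambda_j - n| \leq C_1$. On the other hand, from \eqref{eq:Nn_bdd} and $N(n) \leq j \leq N(n+1) - 1 \leq N(n+1)$ we get $|j - n\mes S| \leq C + a_n \leq C + C_2$ where $C_2 = \sup_n a_n$, hence $|j/\mes S - n| \leq (C + C_2)/\mes S =: C_3$. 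Combining,
\begin{equation}
\label{eq:final_est}
\Bigl| \lambda_j - \frac{j}{\mes S} \Bigr| \leq |\lambda_j - n| + \Bigl| n - \frac{j}{\mes S} \Bigr| \leq C_1 + C_3,
\end{equation}
which is the desired bound \eqref{eq:delta_j_bdd}.

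The main obstacle, and the point deserving the most care, is the bookkeeping around the case $a_n > 1$ (several points of $\Lambda$ sharing the same first coordinate $n$) and the two-sided nature of the index set $\bz$: one must check that the blocks $[N(n), N(n+1))$ genuinely tile $\bz$ and that \eqref{eq:posnegn} is being applied with the right choice of $j$ for negative $n$ as well as positive. Once the enumeration is pinned down, the estimate \eqref{eq:final_est} is routine, resting only on the boundedness of $S$ (to bound the $\beta$-correction term and the $a_n$) and on the bounded-remainder hypothesis \eqref{eq:posnegn} (to control $N(n) - n\mes S$). I do not expect any difficulty beyond careful indexing.
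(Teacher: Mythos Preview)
Your proposal is correct and follows essentially the same approach as the paper's proof: your $N(n)$ is the paper's $s_n$ (with the normalization $s_0=0$), your $a_n=\chi_S(n\alpha)$ is the paper's $\#\Lambda_n$, and your two-term triangle inequality \eqref{eq:final_est} is a minor repackaging of the paper's three-term decomposition $(\lambda_j-n)+(n-s_n/\mes S)+((s_n-j)/\mes S)$. The only cosmetic difference is that the paper leaves the choice of $s_0$ free, while you fix $N(0)=0$; the bookkeeping concern you flag (that the blocks $[N(n),N(n+1))$ tile $\bz$) is handled in both proofs by the same observation that $N(n)=n\mes S+O(1)$ with $\mes S>0$, forcing $N(n)\to\pm\infty$.
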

In other words, the model set $\Lambda$ can be obtained by a bounded perturbation of the points in the arithmetic progression $(1/\mes S)\bz$. The property of a cut-and-project set being at bounded distance from a lattice has been considered by some authors, see e.g.\ \cite{oguey, haynes, senechal} and the references therein.

\begin{proof}[Proof of Lemma \ref{lem:bddist}]
Define
\begin{equation}
\label{eq:enum}
S_n := S \cap (n\alpha + \bz^d) , \quad \Lambda_n := \{ n + \dotprod{x}{\beta} \, : \, x \in S_n \} , \quad n \in \bz.  
\end{equation}
One can see from \eqref{eq:lambdastar}  that the sets $\{ \Lambda_n \}$ form a partition of $\Lambda$ (it is not excluded that some of the $\Lambda_n$ are empty). 
Let $\{ s_n \}$, $n \in \bz$, be a sequence of integers such that 
\begin{equation}
\label{eq:sn-def}
s_{n+1} - s_n = \# \Lambda_n,
\end{equation}
and choose an enumeration $\{ \lambda_j \, : \, j \in \bz \}$ of the set $\Lambda$ in such a way that
\begin{equation}
\label{eq:enum2}
\Lambda_n = \{ \lambda_j \, : \, s_n \leq j < s_{n-1}\}.
\end{equation}
We will show that condition \eqref{eq:delta_j_bdd} is satisfied for this enumeration.

By \eqref{eq:enum}, \eqref{eq:sn-def} we have $s_{n+1} - s_n = \chi_S(n\alpha)$. Hence \eqref{eq:posnegn} implies that for $n >0$,
\begin{equation}
\label{eq:snbound1}
s_n = s_0 + \sum_{k=0}^{n-1} \chi_S(k\alpha)  = n \mes S + O(1),
\end{equation}
and in a similar way one can see that the same is true also for $n\leq0$.

Now given $j$, there is $n =n(j)$ such that $s_n \leq j < s_{n+1}$, and so $\lambda_j \in \Lambda_n$. We have
\begin{equation}
\label{eq:displace}
\lambda_j - \frac{j}{\mes S} = (\lambda_j - n) + \left( n - \frac{s_n}{\mes S} \right) + \left( \frac{s_n-j}{\mes S} \right).
\end{equation}
Since $S$ is a bounded set, there is a constant $R$ such that $\Lambda_n \subset [n-R, n+R]$ for every $n \in \bz$. Hence the first term on the right hand side of \eqref{eq:displace} is bounded. The second term is also bounded, due to \eqref{eq:snbound1}. Finally, the third term is bounded as well, since the number of elements in each set $\Lambda_n$ is bounded. Thus, we obtain \eqref{eq:delta_j_bdd}.
\end{proof}

\subsection{}
\label{sec:avgdev}
It is well-known that a set $S$ is a bounded remainder set if and only if there exists a bounded, measurable function $g$ on the $d$-dimensional torus $\bt^d = \br^d / \bz^d$ such that
\begin{equation}
\label{eq:cohom}
\chi_S (x) - \mes S = g(x+\va)-g(x)   \quad \text{a.e.}
\end{equation}
A simple proof of this fact can be found in \cite[Proposition 2.3]{gr-lev-brs}. The equation \eqref{eq:cohom} is known as the \emph{cohomological equation} for the function $\chi_S$. The function $g$ is unique a.e.\ up to an additive constant, and is called the \emph{transfer function} for $S$. 

We proved in \cite{gr-lev-brs} that if the bounded remainder set $S$ is Riemann measurable, then the transfer function $g$ may be chosen to be a Riemann integrable function:
\begin{theorem}[see {\cite[Theorem 6]{gr-lev-brs}}]
\label{thm:cohom}
Let $S$ be a Riemann measurable bounded remainder set. Then there is a bounded, Riemann integrable function $g: \bt^d \rightarrow \br$ satisfying \eqref{eq:cohom}.
\end{theorem}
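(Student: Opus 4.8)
The plan is to first establish the existence of \emph{some} bounded measurable transfer function $g$ satisfying \eqref{eq:cohom} (this is the standard characterization referenced just above, e.g.\ \cite[Proposition 2.3]{gr-lev-brs}), and then upgrade its regularity using the Riemann measurability of $S$. The key point is that $g$ is determined a.e.\ only up to an additive constant, so we have a genuine representative to work with; the task is to show that among all a.e.-equal choices, one can select a version that is Riemann integrable, i.e.\ bounded with a discontinuity set of measure zero.

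First I would write down an explicit formula for $g$. Iterating \eqref{eq:cohom} along the orbit gives that the partial sums of the centered indicator, namely $\sum_{k=0}^{n-1}(\chi_S(x+k\alpha)-\mes S) = g(x+n\alpha)-g(x)$, are uniformly bounded in $n$ and a.e.\ $x$ — this is precisely the BRS property $|D_n(S,x)|\le C$. From a bounded cocycle one recovers the transfer function by an averaging/ergodic argument: one can take $g(x)$ to be, say, a Banach limit or a Cesàro-type average of $-\sum_{k=0}^{n-1}(\chi_S(x+k\alpha)-\mes S)$, or use the construction in \cite{gr-lev-brs}. The point is to pick the \emph{specific} representative that is adapted to $S$.

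Next, to get Riemann integrability, I would exploit the structure of $S$ as a Riemann measurable set: $\partial S$ has measure zero, and by unique ergodicity of $x\mapsto x+\alpha$ on $\bt^d$ (which holds since $1,\alpha_1,\dots,\alpha_d$ are rationally independent), the orbit of almost every point spends an asymptotically negligible fraction of time near $\partial S$. Concretely, for $x$ whose orbit never meets $\partial S$ — a set of full measure, in fact containing all points outside a countable union of translates of $\partial S$ — the partial sums $D_n(S,x)$ are locally constant in $x$ on a neighborhood depending on $n$, and a careful bookkeeping shows the chosen average $g(x)$ is continuous at every such point. Since the complement (points whose orbit hits $\partial S$) is a countable union of measure-zero sets, hence measure zero, the discontinuity set of $g$ is null, so $g$ is Riemann integrable. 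Boundedness is immediate from the uniform bound $C$ on the cocycle.

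The main obstacle I anticipate is the continuity argument: controlling the modulus of continuity of the averaged function $g$ near a "good" point $x_0$ (one whose orbit avoids $\partial S$). The issue is that although each finite partial sum is locally constant away from the finitely many relevant translates of $\partial S$, the radius of local constancy shrinks as $n\to\infty$, so one must quantify how fast $\chi_S(x+k\alpha)$ can differ between nearby points for $k$ up to $n$ — i.e.\ how many of the first $n$ iterates land within distance $\varepsilon$ of $\partial S$. This is exactly where unique ergodicity (applied to a shrinking family of neighborhoods of $\partial S$, together with a diagonal/uniformity argument, or an outer-measure estimate $\mes\{x: \dist(x,\partial S)<\varepsilon\}\to 0$) must be invoked quantitatively enough to conclude that the ambiguity in $g$ at $x_0$ vanishes. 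This is the technical heart of \cite[Theorem 6]{gr-lev-brs}, which we invoke.
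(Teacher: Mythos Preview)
Your approach diverges from the paper's, and the point where it diverges is exactly the obstacle you flag but do not resolve. The paper states explicitly that the proof of Theorem~\ref{thm:cohom} is based on the structural characterization of Riemann measurable bounded remainder sets in Theorems~\ref{thm:piped} and~\ref{thm:decomp}: one knows that $S$ is equidecomposable, by Riemann measurable pieces and translations in $\bz\alpha+\bz^d$, to a parallelepiped $P$ spanned by vectors in $\bz\alpha+\bz^d$. For such a $P$ one can write down an explicit Riemann integrable transfer function, and the equidecomposition then transports this to a Riemann integrable transfer function for $S$ (the pieces being Riemann measurable is precisely what guarantees the resulting $g$ is Riemann integrable). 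No continuity-at-generic-points argument is needed.

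Your route---take an ergodic/Banach-limit average of the cocycle and argue it is continuous off the orbit of $\partial S$---runs into a genuine difficulty that unique ergodicity alone does not overcome. The radius of local constancy of $D_n(S,\cdot)$ near a good point $x_0$ is governed by $\min_{0\le k<n}\dist(x_0+k\alpha,\partial S)$, which for general irrational $\alpha$ and general measure-zero $\partial S$ can shrink arbitrarily fast in $n$; likewise $\mes\{x:\dist(x,\partial S)<\varepsilon\}$ can decay arbitrarily slowly in $\varepsilon$. Without a Diophantine hypothesis on $\alpha$ or extra regularity of $\partial S$, there is no mechanism to make the tail of the averaging small uniformly on a neighborhood of $x_0$, so the continuity claim is unsupported. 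Your final sentence, invoking \cite[Theorem~6]{gr-lev-brs} for ``the technical heart'', is circular: that is the very result you are meant to be proving. The fix is to abandon the analytic continuity argument and go through the equidecomposability characterization, as the paper indicates.
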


The proof of this result is based on the characterization of the Riemann measurable bounded remainder sets given in Theorems \ref{thm:piped} and \ref{thm:decomp} above.

By applying an appropriate translation to the set $S$, we may arrange the equality \eqref{eq:cohom} to hold for all the points $x$ of the form $\{n\va\}$, that is,
\begin{equation}
\label{eq:cohom-zero}
\chi_S (n \va) - \mes S = g((n+1)\va)-g(n \va), \quad n \in \Z.
\end{equation}
In fact, since \eqref{eq:cohom} holds for almost every $x$, almost every translation of $S$ will satisfy the above.
Notice that condition \eqref{eq:cohom-zero} and the boundedness of $g$ imply \eqref{eq:posnegn}. 

\subsection{}
Let $\Lambda = \{\lambda_j\}$ be the enumeration given by Lemma \ref{lem:bddist}. Then by \eqref{eq:delta_j_bdd} we have
\begin{equation}
\label{eq:delta_j}
\sup_{j\in\bz} |\delta_j|<\infty, \quad \text{where} \quad \delta_j := \lambda_j - \frac{j}{\mes S} .
\end{equation}
We will now see that the perturbations $\delta_j$ are in fact of the same size on the average:

\begin{lemma}
\label{lem:avgdev}
Assume that there is a Riemann integrable function $g$ satisfying \eqref{eq:cohom-zero}. Then there exists a constant $c$ such that
\begin{equation}
\label{eq:limitval}
 \sup_{k \in \bz} \Big|\frac{1}{N} \sum_{j=k+1}^{k+N} \delta_j - c \, \Big| \to 0 \quad (N \to \infty).
\end{equation}
\end{lemma}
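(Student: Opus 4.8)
The idea is to relate the Cesàro averages of the perturbations $\delta_j$ to Cesàro averages of the transfer function $g$ evaluated along the orbit $\{n\va\}$, and then use the fact that a Riemann integrable function is equidistributed along an irrational rotation. First I would use the partition $\{\Lambda_n\}$ of $\Lambda$ from \eqref{eq:enum} and the enumeration \eqref{eq:enum2}. For a block of indices $j$ corresponding to a range $n \in [M+1, M+L]$ of values of $n$, the sum $\sum_j \lambda_j$ splits, via \eqref{eq:enum}, as $\sum_{n} \sum_{x \in S_n}(n + \dotprod{x}{\beta})$. The number of $j$'s involved is $s_{M+L+1}-s_{M+1} = \sum_{n=M+1}^{M+L}\chi_S(n\va)$, which by \eqref{eq:cohom-zero} telescopes and equals $L\cdot\mes S + g((M+L+1)\va) - g((M+1)\va) = L\,\mes S + O(1)$. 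Similarly $\sum_j j = \sum_{n}\sum_{s_n \le j < s_{n+1}} j$, which is a sum of arithmetic-progression blocks and can be computed exactly in terms of the $s_n$, hence in terms of $g$ via \eqref{eq:snbound1} and \eqref{eq:cohom-zero}.

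\textbf{Key steps.} (1) Fix $N$ and $k$, let $M = M(k)$ and $M'=M(k+N)$ be the indices with $s_M \le k < s_{M+1}$ and $s_{M'} \le k+N < s_{M'+1}$; set $L = M'-M$, so $L\,\mes S = N + O(1)$ by \eqref{eq:snbound1}, and the boundary contributions from the incomplete blocks at $k$ and $k+N$ contribute only $O(1)$ to $\sum_j \delta_j$ since $|\delta_j|$ is uniformly bounded by \eqref{eq:delta_j} and each $\#\Lambda_n$ is bounded. (2) Therefore it suffices to estimate $\frac{1}{N}\sum$ over \emph{complete} blocks, i.e.\ over $j$ with $s_{M+1}\le j < s_{M'+1}$, equivalently over $n\in[M+1,M']$. (3) For such complete blocks, write $\sum_j \lambda_j = \sum_{n=M+1}^{M'} \big( \chi_S(n\va)\, n + \dotprod{\sigma(n\va)}{\beta}\big)$, where $\sigma(x) := \sum_{x'\in S\cap(x+\bz^d)} x'$ is a bounded function on $\bt^d$ that is Riemann integrable because $S$ is Riemann measurable (its integral over $\bt^d$ is $\int_S x\,dx$, understood with the appropriate multiplicity). (4) Likewise express $\sum_j j$ over the complete blocks: since within block $n$ the indices run over $s_n \le j < s_{n+1}$, we get $\sum_j j = \sum_n \binom{\chi_S(n\va)}{2} + \sum_n s_n \chi_S(n\va)$, and using $s_n = n\,\mes S + g(n\va) - g(0) $ (from iterating \eqref{eq:cohom-zero}) this becomes $\mes S \sum_n n\chi_S(n\va) + \sum_n (g(n\va)-g(0))\chi_S(n\va) + \sum_n \binom{\chi_S(n\va)}{2}$. (5) Now combine: $\sum_j \delta_j = \sum_j \lambda_j - \frac{1}{\mes S}\sum_j j$, and the terms $\sum_n \chi_S(n\va)\,n$ cancel exactly between the two, leaving $\sum_j\delta_j = \sum_{n=M+1}^{M'} h(n\va) + O(1)$ where $h := \dotprod{\sigma}{\beta} - \frac{1}{\mes S}\big( (g-g(0))\chi_S + \binom{\chi_S}{2}\big)$ is a fixed bounded, Riemann integrable function on $\bt^d$ (here $\binom{\chi_S}{2}$ means $\tfrac12\chi_S(\chi_S-1)$, which is Riemann integrable since $\chi_S$ is integer-valued and Riemann measurable). (6) Finally, divide by $N$ and use $N = L\,\mes S + O(1)$ together with the uniform equidistribution of Riemann integrable functions: $\sup_{M}\big|\frac{1}{L}\sum_{n=M+1}^{M+L} h(n\va) - \int_{\bt^d} h\big| \to 0$ as $L\to\infty$. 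This gives \eqref{eq:limitval} with $c = \frac{1}{\mes S}\int_{\bt^d} h$.

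\textbf{Main obstacle.} The routine bookkeeping is in matching the index ranges for $j$ with the ranges for $n$ and controlling the two incomplete boundary blocks; this is where one must be careful but it only produces $O(1)$ errors because $\#\Lambda_n$ and $|\delta_j|$ are uniformly bounded. The genuinely substantive point is the passage from a pointwise a.e.\ equidistribution statement to the \emph{uniform}-in-$k$ statement \eqref{eq:limitval}: one needs that $\frac1L\sum_{n=M+1}^{M+L} h(n\va)$ converges to $\int h$ uniformly over all starting points $M$, not just for fixed $M$ or $M=0$. This is exactly the kind of uniform Weyl-type equidistribution that holds for Riemann integrable $h$ (approximate $h$ from above and below by step functions adapted to a fine partition of $\bt^d$, for which the uniform bound follows by sandwiching and the triangle inequality, using that the discrepancy of $\{n\va\}$ against boxes is $o(L)$ uniformly in the starting index). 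Hence the crucial input is Theorem~\ref{thm:cohom} (to have $g$, and thus $h$, Riemann integrable) combined with this uniform equidistribution; everything else is elementary.
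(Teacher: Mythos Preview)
Your proposal is correct and follows essentially the same approach as the paper: both arguments reduce to showing that the block sum $\sum_{s_n\le j<s_{n+1}}\delta_j$ equals $h(n\va)$ for a Riemann integrable function $h$ on $\bt^d$, and then invoke uniform equidistribution of $\{n\va\}$ together with the boundedness of $\delta_j$ and of the block sizes to handle incomplete boundary blocks. The only difference is cosmetic: the paper organizes the block computation by adding and subtracting $n$ (splitting into $S_1(n)=\phi(n\va)$ and $S_2(n)=\psi(n\va)$), whereas you compute $\sum_j\lambda_j$ and $\sum_j j$ separately and cancel the $n\,\chi_S(n\va)$ terms directly; your function $\langle\sigma(\cdot),\beta\rangle$ is exactly the paper's $\phi$, and your remaining term is the paper's $\psi$ rewritten using $\chi_S$ instead of substituting the cohomological relation for $\chi_S$.
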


\begin{proof}
We continue to use the same notations introduced in the proof of Lemma \ref{lem:bddist}. First we are going to derive a simple expression for the sum $\sum \delta_j$ with $j$ going through the ``block'' $s_n \leq j < s_{n+1}$. Indeed, we have
\begin{equation*}
\sum_{s_n \leq j < s_{n+1}} \delta_j = \sum_{s_n \leq j < s_{n+1}} \left( \lambda_j - n \right) - \sum_{s_n \leq j < s_{n+1}} \left( \frac{j}{\mes S} - n \right) \eqdef S_1(n) - S_2(n).
\end{equation*}
We evaluate each one of the sums $S_1(n)$ and $S_2(n)$ separately. Consider the function
\begin{equation*}
\phi(x) := \sum_{m \in \bz^d} \ip{x+m}{\beta} \, \1_S(x + m).
\end{equation*}
This function is $1$-periodic, hence it may  be viewed as a function on $\bt^d= \br^d/\bz^d$. By \eqref{eq:enum}, \eqref{eq:enum2}  we  have $S_1(n) = \phi(n\alpha)$. The second sum $S_2(n)$ can be calculated explicitly, 
\begin{equation}
\label{eq:s2sum}
S_2(n) = (s_{n+1}-s_n) \left( \frac{s_{n+1}+s_n-1}{2\mes S} - n \right).
\end{equation}
Using condition \eqref{eq:cohom-zero} we get that for $n>0$,
\begin{equation}
\label{eq:sng}
s_n = s_0 + \sum_{k=0}^{n-1} \chi_S(k\alpha)  = n \mes S + g(n \alpha) + c_1,
\end{equation}
and similarly the same is true also for $n \leq 0$ (with the same constant $c_1$). 
Substituting this expression for $s_n$ in \eqref{eq:s2sum} yields that $S_2(n) = \psi(n\va)$, where
\[
\psi(x) :=  \frac{ \chi_S(x) (g(x) + g(x+\va) + c_2)}{2\mes S}.
\]
We conclude that 
\begin{equation*}
\sum_{s_n \leq j < s_{n+1}} \delta_j = h(n \va), \quad n \in \bz,
\end{equation*}
where $h : \bt^d \rightarrow \br$ is the Riemann integrable function given by $h(x) := \phi(x) - \psi(x)$.

Now to prove \eqref{eq:limitval} it will be enough to consider the case where $k = s_n - 1$ and $k+N = s_{n+r} - 1$, that is, where the sum in \eqref{eq:limitval} goes though $r$ consecutive ``blocks''. This is due to the fact that the $\delta_j$ are known to be bounded by \eqref{eq:delta_j}, and the number of elements in each ``block'' is bounded as well. So, consider the sum
\begin{equation*}
\sum_{s_n \leq j < s_{n+r}} \delta_j = \sum_{m=n}^{n+r-1} \sum_{s_m \leq j < s_{m+1}} \delta_j = \sum_{m=n}^{n+r-1} h(m\va).
\end{equation*}
The points $\{ m\alpha \}$ are well-equidistributed on $\bt^d$, and $h$ is Riemann integrable, hence
\begin{equation*}
\sup_{n \in \bz} \Big| \sum_{m=n}^{n+r-1} h(m\va) - r \int_{\bt^d} h(x) \, dx \Big| = o(r) , \quad r \rightarrow \infty
\end{equation*}
(see \cite[pp.\ 46, 52]{kuipers}). From \eqref{eq:sng} it follows that $s_{n+r} - s_n =r \mes S + O(1)$, and hence
\begin{equation*}
\frac{1}{s_{n+r} - s_n} \sum_{s_n \leq j < s_{n+r}} \delta_j = \frac{1}{\mes S} \int_{\bt^d} h(x) \, dx + o(1) , \quad r \rightarrow \infty ,
\end{equation*}
uniformly with respect to $n$. It follows that \eqref{eq:limitval} holds with
\begin{equation*}
c = \frac{1}{\mes S} \int_{\T^d} h(x) \, dx,
\end{equation*}
and this concludes the proof of Lemma \ref{lem:avgdev}. 
\end{proof}


\section{Model sets that give Riesz bases}
\label{sec:proof-thm-pos}
We are now equipped to present the proof of Theorem \ref{thm1}. The result states that a simple quasicrystal $\Lambda(\Gamma, I)$ for which $|I| \in p_2 (\Gamma)$ provides a Riesz basis of exponentials in $L^2(S)$ for any Riemann measurable set $S$ satisfying the two conditions
\begin{enumerate-math}
\item 
$\mes S = \D(\Lam)$;
\item 
$S$ is equidecomposable to a {\piped} spanned by vectors in $p_1(\Gamma^*)$, using only translations by vectors in $p_1(\Gamma^*)$.
\end{enumerate-math}
By Lemma \ref{lem:thm1spec} it will be enough to consider the case when $\Gamma$ and $\Gamma^*$ are of the special form \eqref{eq:gam}, \eqref{eq:gamdual}. Then the quasicrystal $\Lambda (\Gamma, I)$ has density $\D(\Lambda) = |I|$, and
\begin{equation*}
p_1(\Gamma^*) = \bz \va + \bz^d .
\end{equation*}
Hence, Theorem \ref{thm1} follows immediately from Theorems \ref{thm3} and \ref{thm:decomp}. It remains to prove Theorem \ref{thm3}.

\subsection{Avdonin's theorem}
We will prove Theorem \ref{thm3} by invoking the duality in Corollary \ref{cor:duality}. Namely, in order to prove that $E(\Lambda(\Gamma, I))$ is a Riesz basis in $L^2(S)$, it is sufficient to show that $E(\Lambda^*(\Gamma,S))$ is a Riesz basis in $L^2(I)$. For the latter we will use the following result due to Avdonin, which gives a sufficient condition on a system of exponential functions to be a Riesz basis in $L^2(I)$.
\begin{theorem}[Avdonin \cite{avdonin}]
\label{thm:avdonin}
Let $I \subset \br$ be an interval, and let $\{ \lambda_j , \, j \in \bz \}$ be a sequence in $\br$ satisfying the following three conditions:
\begin{enumerate-math-abc}
\item
\label{item:separation}
$\{\lambda_j\}$ is a separated sequence, that is, $\inf_{j \neq k} |\lambda_j - \lambda_k| > 0$;
\item
\label{item:boundedness}
$\sup_{j} |\delta_j| < \infty$, where $\delta_j := \lambda_j - j/|I|$;
\item
\label{item:cancellation}
There is a constant $c$ and a positive integer $N$ such that
\begin{equation}
\label{eq:avdonin-condition}
\sup_{k \in \Z} \; \Big| \frac1{N} \sum_{j=k+1}^{k+N} \delta_{j} \; - \; c \, \Big| < \frac1{4 |I|} \, .
\end{equation}
\end{enumerate-math-abc}
Then the system $\{ e^{2\pi i \lambda_j} \}$ is a Riesz basis in $L^2(I)$.
\end{theorem}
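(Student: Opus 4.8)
The plan is to reduce to Kade\v{c}'s classical $\tfrac14$-theorem by first absorbing the ``average'' of the perturbation into a system to which Kade\v{c} applies, and then treating the remaining mean-zero part as a stable perturbation; the delicate point will be exactly this last stability statement, where the constant $\tfrac14$ reappears as the Helson--Szeg\H{o} threshold $\pi/2$ after multiplication by $2\pi$. First I would normalize. Rescaling $t \mapsto t/|I|$ we may take $|I| = 1$, so the reference frequencies are the integers and $\delta_j = \lambda_j - j$. Replacing each $\lambda_j$ by $\lambda_j - c$ multiplies every function in $L^2(I)$ by the unimodular factor $e^{-2\pi i c t}$, a unitary map preserving the Riesz basis property, so we may assume $c = 0$; and since $\{e^{2\pi i j t}\}_{j \in \bz}$ is an orthonormal basis of $L^2(I)$ for \emph{every} interval $I$ of length $1$, the location of $I$ is irrelevant. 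After these reductions \ref{item:boundedness} reads $\sup_j |\delta_j| < \infty$, \ref{item:separation} is the separation of $\{\lambda_j\}$, and \ref{item:cancellation} becomes $\sup_k \big| \tfrac1N \sum_{j=k+1}^{k+N} \delta_j \big| < \tfrac14$; in particular the partial sums $\sum_{j=1}^{M} \delta_j$ stay within bounded distance of a line of slope less than $\tfrac14$. I would stress that Kade\v{c}'s theorem cannot be invoked directly and that a Paley--Wiener type stability estimate comparing $\{e^{2\pi i \lambda_j t}\}$ with $\{e^{2\pi i j t}\}$ is hopeless: the individual $\delta_j$ need not be small, so the associated perturbation operator is genuinely \emph{not} small, and only the averaged cancellation in \ref{item:cancellation} can be used.

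To make contact with Kade\v{c} I would group $\bz$ into consecutive blocks $B_m$ of length $N$ and introduce the block-constant sequence $\tilde\lambda_j = j + \bar\delta_{m(j)}$, where $\bar\delta_m = \tfrac1N \sum_{j \in B_m} \delta_j$ is the average of $\delta$ over the block containing $j$. By the normalized form of \ref{item:cancellation} one has $\sup_j |\tilde\lambda_j - j| = \sup_m |\bar\delta_m| < \tfrac14$, so Kade\v{c}'s $\tfrac14$-theorem shows at once that $\{e^{2\pi i \tilde\lambda_j t}\}$ is a Riesz basis of $L^2(I)$. It then remains to pass from $\tilde\lambda_j$ to $\lambda_j$ through the displacement $\eta_j := \lambda_j - \tilde\lambda_j = \delta_j - \bar\delta_{m(j)}$. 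This displacement is bounded, $\{\lambda_j\}$ is separated by \ref{item:separation}, and, crucially, $\sum_{j \in B_m} \eta_j = 0$ for every block; consequently the partial sums $\sum_{j=1}^{M} \eta_j$ are \emph{bounded} (they vanish at block ends and are controlled by $N \sup_j|\eta_j|$ in between).

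The heart of the argument, and the step I expect to be the main obstacle, is therefore the stability statement: \emph{a bounded, separated displacement of a Riesz basis of exponentials whose partial sums $\sum_{j \leq M} \eta_j$ are uniformly bounded again yields a Riesz basis.} Since the perturbation is not small, this cannot be obtained from a Neumann series; instead I would pass to the entire-function picture, realizing $\{\lambda_j\}$ as the zero set of a generating function $G$ of sine type (the bound \ref{item:boundedness} and the separation \ref{item:separation} give exactly the sine-type growth), and reduce the Riesz basis property to a Muckenhoupt $(A_2)$ / Helson--Szeg\H{o} condition on a weight built from $G$. The relevant quantity is the harmonic conjugate of $\log|G|$, an ``accumulated phase'' whose local oscillation is governed by windowed sums of $2\pi\,\delta_j$; the bound $\tfrac14$ in \ref{item:cancellation}, multiplied by the $2\pi$ coming from the exponentials, is precisely what keeps this phase oscillation below the Helson--Szeg\H{o} borderline $\pi/2$, while the bounded (but not small) fluctuations $\eta_j$ within each block, having zero block-mean, are absorbed by summation by parts into the bounded part of the weight. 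Carrying out this phase estimate rigorously --- in particular the correct regularization near the real zeros of $G$ and the summation-by-parts bound that converts ``averages below $\tfrac14$'' into ``conjugate function below $\pi/2$'' --- is the technical core of the proof; once it is in place, the Helson--Szeg\H{o} theorem delivers the $(A_2)$ condition and hence the Riesz basis property.
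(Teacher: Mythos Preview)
The paper does not prove this statement at all: Theorem~\ref{thm:avdonin} is quoted from Avdonin's paper \cite{avdonin} and used as a black box, with only the remark that it generalizes Kadec's $\tfrac14$-theorem and is in fact a special case of Avdonin's full result. So there is no ``paper's own proof'' to compare against.

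As for your sketch itself: the ingredients you name (sine-type generating function, Pavlov's $(A_2)$/Helson--Szeg\H{o} criterion, the appearance of $\tfrac14$ as the $\pi/2$ threshold after the $2\pi$ normalization) are indeed the ones that go into Avdonin's original argument. However, the logical structure you propose is circuitous in a way that does not actually save work. The detour through the block-averaged sequence $\tilde\lambda_j$ and Kadec's theorem buys nothing, because the ``stability statement'' you then isolate --- that a bounded, separated, zero-block-mean perturbation of a Riesz basis of exponentials is again a Riesz basis --- is \emph{not} a known off-the-shelf result; proving it requires exactly the $(A_2)$ verification you describe in your last paragraph, and that verification is no easier for the passage $\tilde\lambda\to\lambda$ than for the direct passage $\bz\to\lambda$. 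In other words, once you have decided (correctly) that the endgame is to check the Helson--Szeg\H{o} condition for the generating function of $\{\lambda_j\}$, the intermediate Kadec step is redundant: Avdonin's actual proof goes straight from conditions \ref{item:separation}--\ref{item:cancellation} to the $(A_2)$ estimate for the weight $|G(x+i)|^2$, using the averaging hypothesis \ref{item:cancellation} to bound the conjugate function. Your final paragraph is essentially a correct description of that direct route; the preceding block-averaging reduction should be dropped.
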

This is a generalization of Kadec's $1/4$ theorem, which corresponds to the case $N=1$. In fact, the theorem above is a special case of the result given in \cite{avdonin}.

\subsection{Proof of Theorem \ref{thm3}}
Let $\Lambda(\Gamma, I)$ be the simple quasicrystal \eqref{eq:sqc} with $\Gamma$ of special form \eqref{eq:gam}, and let $S$ be a Riemann measurable bounded remainder set with $\mes S = |I|$. We want to show that $E(\Lambda(\Gamma, I))$ is a Riesz basis in $L^2(S)$. By Theorem \ref{thm:cohom} there exists a Riemann integrable function $g$ satisfying the cohomological equation \eqref{eq:cohom}. Moreover, by translation of $S$ we may assume that conditions \eqref{eq:posnegn} and \eqref{eq:cohom-zero} are satisfied. Such a translation will not affect the Riesz basis property for $S$. 

By Corollary \ref{cor:duality} with $U=S$ and $V=I$, it will be sufficient to show that the dual system $E(\Lambda^*(\Gamma, S))$ is a Riesz basis in $L^2(I)$. 
As condition \eqref{eq:posnegn} is satisfied, we can invoke Lemma \ref{lem:bddist} to obtain an enumeration of $\Lambda^*(\Gamma, S)$ for which condition \ref{item:boundedness} in Theorem \ref{thm:avdonin} holds. Moreover, since \eqref{eq:cohom-zero} is satisfied, Lemma \ref{lem:avgdev} guarantees that also condition \ref{item:cancellation} holds for this enumeration. Finally, condition \ref{item:separation} is satisfied as well since $\Lambda^*(\Gamma, S)$ is a uniformly discrete set. Thus, $E(\Lambda^*(\Gamma, S))$ is a Riesz basis in $L^2(I)$ by Theorem \ref{thm:avdonin}, and this completes the proof of Theorem \ref{thm3}.

\subsection{Proof of Proposition \ref{prop:densefam}}
We now show that if $|I| \in p_2(\Gamma)$, then the simple quasicrystal $\Lambda (\Gamma, I)$ provides a Riesz basis $E(\Lambda(\Gamma, I))$ in $L^2(S)$ for a large collection of sets $S$ in the following sense: Given any compact set $K$ and open set $U$, where $K \subset U$ and $\mes K < \D(\Lambda) < \mes U$, one can find a Riemann measurable set $S$ satisfying the conditions in Theorem \ref{thm1} such that $K \subset S \subset U$. 

Indeed we have already seen that Proposition \ref{prop:densefam} is true when $\Gamma$, $\Gamma^*$ are lattices of the special form \eqref{eq:gam}, \eqref{eq:gamdual}; this follows from Theorems \ref{thm:decomp} and \ref{thm:densmes}. 
By Lemma \ref{lem:cor1spec}, the proposition is true also in the general case.

\subsection{Examples}
\label{sec:genex}
Finally, let us see how Examples \ref{ex:exone} and \ref{ex:extwo} can be deduced from Theorem \ref{thm3}. These examples are special cases of the following more general result. 
\begin{theorem}
\label{thm:genex}
Let $\alpha$ and $\beta$ be column vectors in $\br^d$ satisfying conditions \ref{it:alpha} and \ref{it:beta} in Definition \ref{def:speciallat}, and define a sequence $\Lambda = \{ \lambda (m) \}$ by
\begin{equation}
\label{eq:specqc}
\lambda (m) = m + \{ \alpha^{\top} m \} \beta , \quad m \in \bz^d 
\end{equation}
(where $\{ x \}$ denotes the fractional part of a real number $x$). Then the  system $E(\Lambda)$ is a Riesz basis in $L^2(S)$ for every Riemann measurable set $S$ which is equidecomposable to the unit cube $Q = [0,1)^d$ using translations by vectors in  $\bz \va + \bz^d$.
\end{theorem}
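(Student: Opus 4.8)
The plan is to realize the sequence $\Lambda$ of \eqref{eq:specqc} as a simple quasicrystal associated with a lattice of special form, and then to invoke Theorem \ref{thm3}. First I would let $\Gamma$ be the lattice of special form \eqref{eq:gam} determined by the given column vectors $\alpha$ and $\beta$; conditions \ref{it:alpha} and \ref{it:beta} are precisely what Definition \ref{def:speciallat} requires, so $\Gamma$ is of special form, in particular $\det\Gamma = 1$. Next I would take the semi-closed window $I = (-1,0]$ and check directly that $\Lambda(\Gamma, I) = \Lambda$: for a lattice point $\gamma = ((\operatorname{Id}+\beta\alpha^{\top})m - \beta n,\, n - \alpha^{\top}m)$ the condition $p_2(\gamma) = n - \alpha^{\top}m \in (-1,0]$ forces $n = \floor{\alpha^{\top}m}$ (using that $\alpha^{\top}m \notin \bz$ for $m \neq 0$, by \ref{it:alpha}), and then $p_1(\gamma) = m + (\alpha^{\top}m - \floor{\alpha^{\top}m})\beta = m + \{ \alpha^{\top}m \}\beta = \lambda(m)$. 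Thus $\Lambda = \Lambda(\Gamma, I)$ is a simple quasicrystal with window of length $|I| = 1$ and density $\D(\Lambda) = |I|/\det\Gamma = 1$, and moreover $|I| = 1 \in p_2(\Gamma)$, since $n - \alpha^{\top}m = 1$ for $m = 0$, $n = 1$.

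Second, I would translate the equidecomposability hypothesis into the language of bounded remainder sets. The unit cube $Q = [0,1)^d$ is the {\piped} spanned by the standard basis vectors $e_1, \ldots, e_d$, which lie in $\bz^d \subset \bz\alpha + \bz^d$. Hence, if $S$ is a Riemann measurable set equidecomposable to $Q$ using only translations by vectors in $\bz\alpha + \bz^d$, the ``if'' direction of Theorem \ref{thm:decomp} applies (with $P = Q$) and shows that $S$ is a bounded remainder set with respect to $\alpha$; obviously $\mes S = \mes Q = 1 = |I|$.

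Finally, since $\Gamma$ is of special form and $S$ is a Riemann measurable bounded remainder set (relative to the vector $\alpha$ attached to $\Gamma$) with $\mes S = |I|$, Theorem \ref{thm3} yields that $E(\Lambda(\Gamma, I)) = E(\Lambda)$ is a Riesz basis in $L^2(S)$, which is exactly the assertion of Theorem \ref{thm:genex}. Examples \ref{ex:exone} and \ref{ex:extwo} then follow by specialization: for Example \ref{ex:exone} take $d = 1$, $\alpha$ irrational and $\beta = 1$, noting that a finite union of disjoint intervals of total length $1$ with lengths in $\bz\alpha + \bz$ is equidecomposable to $[0,1)$ by translations in $\bz\alpha + \bz$; for Example \ref{ex:extwo} take $d = 2$ and $\alpha = \beta = (\sqrt 2, \sqrt 3)^{\top}$, for which \ref{it:alpha} and \ref{it:beta} hold because $1, \sqrt 2, \sqrt 3$ are linearly independent over the rationals and so are $\sqrt 2, \sqrt 3, 6 = 1 + \beta^{\top}\alpha$.

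I do not expect a genuine difficulty in this argument: once $\Lambda$ is correctly identified with a simple quasicrystal, everything is reduced to the already established Theorems \ref{thm3} and \ref{thm:decomp}. The only two points that need care are (i) picking the \emph{correct} semi-closed window, namely $I = (-1,0]$ rather than $[0,1)$, since only the former reproduces the $+\{\alpha^{\top}m\}\beta$ perturbation in \eqref{eq:specqc} (the other choice would give $-\{\alpha^{\top}m\}\beta$), and (ii) keeping in mind that Theorems \ref{thm3} and \ref{thm:decomp} are already formulated for lattices of special form, so that no further linear change of variables is needed here.
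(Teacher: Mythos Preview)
Your proof is correct and follows essentially the same route as the paper's: identify $\Lambda$ with the simple quasicrystal $\Lambda(\Gamma, I)$ for the special-form lattice $\Gamma$ and window $I=(-1,0]$, then combine Theorem~\ref{thm:decomp} (to pass from equidecomposability with $Q$ to the bounded remainder property) with Theorem~\ref{thm3}. Your explicit verification of the identification $\Lambda=\Lambda(\Gamma,I)$ and your remark on why $(-1,0]$ is the right window add welcome detail, but the argument is the same.
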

\begin{proof}
Let $I$ be the interval $(-1,0]$, and let $\Gamma$ and $\Gamma^*$ be lattices of the special form \eqref{eq:gam}, \eqref{eq:gamdual}. Then the simple quasicrystal $\Lambda = \Lambda (\Gamma, I)$ is the set \eqref{eq:specqc}. By Theorem \ref{thm3}, $E(\Lambda)$ is a Riesz basis in $L^2(S)$ for every Riemann measurable bounded remainder set $S$ with $\mes S = 1$. Hence by Theorem \ref{thm:decomp}, $E(\Lambda)$ is a Riesz basis in $L^2(S)$ for every Riemann measurable set $S$ which is equidecomposable to the unit cube using translations by vectors in $\bz \va + \bz^d$.
\end{proof}

Examples \ref{ex:exone} and \ref{ex:extwo} follow directly from Theorem \ref{thm:genex}. In the one-dimensional case, if we take $\alpha \in \br$ irrational and $\beta=1$, then we obtain Example \ref{ex:exone}. In two dimensions, Example \ref{ex:extwo} follows by choosing $\alpha = \beta = (\sqrt{2}, \sqrt{3})$.


\section{Model sets which do not give Riesz bases}
\label{sec:proof-thm-neg}
In this section we prove Theorem \ref{thm2}. That is, we show that if the simple quasicrystal $\Lambda (\Gamma, I)$ does not satisfy the arithmetical condition $|I| \in p_2(\Gamma)$, then there is no Riemann measurable set $S \subset \br^d$ such that $E(\Lambda(\Gamma, I))$ is a Riesz basis in $L^2(S)$.

\subsection{Bounded mean oscillation}
Let us recall the definition of functions and sequences with bounded mean oscillation. Let $f(x)$ be a locally integrable function on $\br^d$, and denote by $f_J$ the average of $f$ over a bounded interval $J \subset \br$, that is $f_J = |J|^{-1} \int_J f(x) \, dx$. The \emph{mean oscillation} of $f$ over $J$ is defined as
\begin{equation*}
\frac{1}{|J|} \int_J \left| f(x) - f_J \right| \, dx.
\end{equation*}
If the mean oscillation of $f$ is bounded uniformly over all intervals $J$, then we say that $f$ has \emph{bounded mean oscillation}, and we write $f \in \bmo (\br)$. Clearly any bounded function belongs to $\bmo (\br)$, but it is well-known that $\bmo (\br)$ contains also unbounded functions, such as the function $f(x) = \log |x|$.

Similarly, one can define the space $\bmo(\bz)$ of sequences with bounded mean oscillation. We say that a sequence of complex numbers $\{c_n\}_{n \in \bz}$ has bounded mean oscillation, and we write $\{c_n\} \in \bmo(\bz)$, if 
\begin{equation*}
\sup_{n<m} \left( \frac{1}{m-n} \sum_{k=n+1}^{m} \left| c_k - \frac{c_{n+1} + \cdots + c_{m}}{m-n} \right| \right)  
\end{equation*} 
is finite. 

\subsection{Discrepancy function}
For a discrete set $\Lambda \subset \br$, denote by $n_{\Lambda}(x)$ the \emph{counting function} for $\Lambda$ satisfying 
\begin{equation*}
n_{\Lambda}(y) - n_{\Lambda}(x) = \# \left( \Lambda \cap [x,y) \right), \quad x < y .
\end{equation*}
This condition defines $n_{\Lambda}(x)$ uniquely up to an additive constant. 

If the set $\Lambda$ has uniform density $\D (\Lambda)$, then we define the \emph{discrepancy function} of $\Lambda$ to be the difference
\begin{equation}
\label{eq:onedimdisc}
d(\Lambda, x) = n_{\Lambda}(x) - \D(\Lambda) x , \quad x \in \br .
\end{equation}
This piecewise linear function with slope $-\D(\Lambda)$ and positive unit jumps at every $x \in \Lambda$ gives a quantitative measure of the uniform distribution of $\Lambda$. From the definition of $\D(\Lambda)$ it is clear that $d(\Lambda, x) = o(x)$ as $x \rightarrow \pm \infty$.

Now let $\Lambda = \Lambda^*(\Gamma, S)$ as given in \eqref{eq:lambdastar}, and consider the associated discrepancy function $d(\Lambda, x)$. There is a close connection between $d(\Lambda, x)$ and the sequence $\{D_n(S)\}_{n \in \bz}$ defined as 
\begin{equation}
\label{eq:d_ns}
D_n(S) = 
\begin{cases}
\sum_{k=0}^{n-1} \chi_S(k\alpha) - n \mes S , \quad &n >0 \\
0 , \quad &n=0\\
-\sum_{k=n}^{-1} \chi_S(k\alpha) - n \mes S , \quad &n<0 \\
\end{cases}.
\end{equation}
In Lemma \ref{lem:bddist} we looked at the case when this sequence is bounded, and showed that $\Lambda$ is then at bounded distance from an arithmetical progression. One can check that in this case, the corresponding discrepancy $d(\Lambda, x)$ is a bounded function.

We now consider the case when $d(\Lambda,x)$ belongs to $\bmo (\br)$.
\begin{lemma}
\label{lem:bmorel}
Let $\Lambda = \Lambda^*(\Gamma, S)$ be given in \eqref{eq:lambdastar}, and suppose that $d(\Lambda,x) \in \bmo (\br)$. Then $\{ D_n(S) \}$ in \eqref{eq:d_ns} belongs to $\bmo(\bz)$.
\end{lemma}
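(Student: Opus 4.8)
The plan is to relate the sequence $\{D_n(S)\}$ directly to the discrepancy function $d(\Lambda,x)$ evaluated at the points $\lambda_n := n$ (the ``block boundaries'' from the enumeration in Lemma~\ref{lem:bddist}), and then transfer the BMO property from the function to the sequence using the averaging structure. Recall from \eqref{eq:lambdastar} and the partition $\Lambda = \bigsqcup_n \Lambda_n$ (with $\Lambda_n$ as in \eqref{eq:enum}) that each $\Lambda_n$ is a finite set of size $\chi_S(n\alpha)$ contained in a fixed interval $[n-R, n+R]$. Consequently, up to an additive constant,
\begin{equation*}
n_{\Lambda}(n) = \#(\Lambda \cap [0,n)) = \sum_{k=0}^{n-1} \chi_S(k\alpha) \quad (n>0),
\end{equation*}
with the analogous formula for $n \le 0$, and since $\D(\Lambda) = \mes S$ we get $d(\Lambda, n) = D_n(S) + \mathrm{const}$ for all $n \in \bz$. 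Thus the integer-indexed sequence $\{D_n(S)\}$ is, up to a harmless constant, just the restriction of $d(\Lambda,\cdot)$ to the integers.

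First I would make this identification precise, fixing the additive constant in $n_{\Lambda}$ so that equality (not merely equality up to a constant) holds; the constant in $d(\Lambda,x)$ washes out of every mean oscillation expression anyway. Second, given integers $n < m$, I would estimate the mean oscillation of the sequence $\{D_k(S)\}_{n < k \le m}$ by the mean oscillation of the function $d(\Lambda,x)$ over the interval $J = [n,m]$. The point is that $d(\Lambda,x)$ is piecewise linear with slope $-\mes S$ and unit upward jumps on $\Lambda$, so on each unit interval $[k, k+1)$ the function oscillates by at most $1 + \mes S + \#\Lambda_k \le C_0$, a bound independent of $k$ (here I use that $S$ is bounded, so $\#\Lambda_k$ is uniformly bounded). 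Hence
\begin{equation*}
|D_k(S) - d(\Lambda, x)| \le C_0 \quad \text{for all } x \in [k-1,k],
\end{equation*}
which lets me compare the discrete average $\tfrac{1}{m-n}\sum_{k=n+1}^m D_k(S)$ with the continuous average $d(\Lambda,\cdot)_J = \tfrac{1}{m-n}\int_n^m d(\Lambda,x)\,dx$ up to an error $O(1)$, and likewise compare $|D_k(S) - \text{(discrete average)}|$ with $\tfrac{1}{|J\cap[k-1,k]|}\int |d(\Lambda,x) - d(\Lambda,\cdot)_J|\,dx$ up to $O(1)$. Summing over $k$ and dividing by $m-n$ yields
\begin{equation*}
\frac{1}{m-n}\sum_{k=n+1}^{m}\Big| D_k(S) - \frac{D_{n+1}(S)+\cdots+D_m(S)}{m-n}\Big| \;\le\; \frac{2}{|J|}\int_J |d(\Lambda,x) - d(\Lambda,\cdot)_J|\,dx + O(1),
\end{equation*}
and the right-hand side is bounded uniformly in $n,m$ precisely because $d(\Lambda,\cdot) \in \bmo(\br)$.

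The main obstacle I anticipate is the bookkeeping around the non-cancelling additive constants and the passage between discrete sums and integrals: one must be careful that the constant relating $d(\Lambda,n)$ to $D_n(S)$ is genuinely independent of $n$ (this uses that the $\Lambda_n$ are uniformly bounded in cardinality and sit in intervals $[n-R,n+R]$ of fixed width, so the ``telescoping'' of the counting function is exact, not just asymptotic), and that when $J=[n,m]$ is short the averages are still controlled — but for the supremum defining $\bmo(\bz)$ only $m - n \ge 1$ matters, and the uniform per-unit-interval oscillation bound $C_0$ handles the short intervals directly. Everything else is the standard and elementary fact that a piecewise-linear function whose increments over unit intervals are uniformly bounded has its $\bmo(\br)$ seminorm comparable to the $\bmo(\bz)$ seminorm of its integer samples. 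I would therefore organize the write-up as: (i) exact identification $d(\Lambda,n) = D_n(S) + c$; (ii) uniform bound on the oscillation of $d(\Lambda,\cdot)$ on unit intervals; (iii) comparison of discrete and continuous averages and mean oscillations over an arbitrary $J=[n,m]$; (iv) conclusion.
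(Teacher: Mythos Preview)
Your overall strategy is sound and is essentially the same as the paper's: restrict the continuous discrepancy $d(\Lambda,\cdot)$ to the integers, observe that the result agrees with $\{D_n(S)\}$ up to something harmless, and then pass from $\bmo(\br)$ to $\bmo(\bz)$ using that $d(\Lambda,\cdot)$ has uniformly bounded oscillation on unit intervals. However, your step (i) contains a genuine (though easily repaired) error.

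You assert that $n_{\Lambda}(n)=\#(\Lambda\cap[0,n))=\sum_{k=0}^{n-1}\chi_S(k\alpha)$ exactly, and hence that $d(\Lambda,n)=D_n(S)+c$ with a constant $c$ independent of $n$. This is false. The blocks $\Lambda_k$ sit in $[k-R,k+R]$ with $R>0$ in general, so points of $\Lambda_{-1},\dots,\Lambda_{-\lceil R\rceil}$ may lie in $[0,n)$ while points of $\Lambda_{n-1},\dots,\Lambda_{n-\lceil R\rceil}$ may lie outside. The difference
\[
n_\Lambda(n)-\sum_{k=0}^{n-1}\#\Lambda_k
\]
is therefore \emph{bounded} (by a constant depending only on $R$ and on $\sup_k\#\Lambda_k$), but it is not constant in $n$; the ``telescoping'' you invoke is not exact. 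Your remark that the $[n-R,n+R]$ containment makes things ``exact, not just asymptotic'' is precisely the mistaken step.

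The fix is immediate and is in fact how the paper proceeds: introduce the auxiliary step function $\tilde n_\Lambda$ with $\tilde n_\Lambda(0)=0$ and $\tilde n_\Lambda(y)-\tilde n_\Lambda(x)=\sum_{k\in[x,y)\cap\bz}\#\Lambda_k$, and set $f(x)=\tilde n_\Lambda(x)-x\mes S$. Then $f(n)=D_n(S)$ holds \emph{exactly}, while $d(\Lambda,x)-f(x)=n_\Lambda(x)-\tilde n_\Lambda(x)$ is a bounded function, so $f\in\bmo(\br)$. From there your steps (ii)--(iv) go through with $f$ in place of $d(\Lambda,\cdot)$; the paper does this by writing $f=g+h$ with $g$ piecewise constant and $h$ bounded and $1$-periodic, so that the mean oscillation of $g$ over integer intervals is literally the discrete mean oscillation of $\{f(n)\}=\{D_n(S)\}$. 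Alternatively, you can simply weaken your step (i) to ``$d(\Lambda,n)-D_n(S)$ is a bounded sequence'' and note that your inequality $|D_k(S)-d(\Lambda,x)|\le C_0$ for $x\in[k-1,k]$ still holds (with a larger $C_0$), after which your comparison argument in (iii) is unaffected.
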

\begin{proof}
We introduce a new function $\tilde{n}_{\Lambda}(x)$, defined by $\tilde{n}_{\Lambda} (0) = 0$ and the condition 
\begin{equation}
\label{eq:tilden}
\tilde{n}_{\Lambda}(y)-\tilde{n}_{\Lambda}(x) = \sum_{k \in [x,y) \cap \bz} \# \Lambda_k, \quad x<y,
\end{equation}
where $\Lambda_k$ is a block in the partition of $\Lambda$ given in \eqref{eq:enum}. 
We may think of $\tilde{n}_{\Lambda}(x)$ as the counting function for a multiset with multiplicity $\# \Lambda_k$ at the point $x=k$. 
Recall that $\Lambda_k \subset [k-R, k+R]$ for some $R=R(S, \Gamma)$ and that the block sizes $\# \Lambda_k$ are uniformly bounded. It follows that $n_{\Lambda}(x)-\tilde{n}_{\Lambda}(x)$ is a bounded function. Thus, if we define $f(x)$ as
\begin{equation*}
f(x) = \tilde{n}_{\Lambda}(x) - x \mes S,
\end{equation*}
then from $\D (\Lambda) = \mes S$ it follows that the difference $d(\Lambda, x) - f(x)$ is also a bounded function. Since $d(\Lambda, x)$ belongs to $\bmo (\br)$, so does $f(x)$.

The function $f(x)$ is piecewise linear, with slope $-\mes S$ and bounded integer jumps at integer values of $x$. We can therefore write $f$ as the sum of two functions, $f=g+h$, where $g(x)$ is piecewise constant and equal to $f(n)$ on $(n-1,n]$, and $h(x)$ is $1$-periodic and linear with slope $-\mes S$ on each such interval. The mean oscillation of the function $g$ over the interval $[n,m)$ is given by
\begin{equation}
\label{eq:bmog}
\frac{1}{m-n} \sum_{k=n+1}^m \left| f(k) - \frac{f(n+1) + \cdots + f(m)}{m-n} \right| . 
\end{equation}
Since $g$ differs from $f$ by a bounded function, we have $g \in \bmo(\br)$, so \eqref{eq:bmog} is bounded uniformly with respect to $n$ and $m$. In other words, the sequence $\{f(n) \}_{n \in \bz}$ belongs to $\bmo(\bz)$. 
Finally we have that $f(n) = D_n(S)$, and hence $\{ D_n(S)\} \in \bmo(\bz)$. 
\end{proof}

\subsection{Pavlov's theorem}
To prove Theorem \ref{thm2}, we use the duality in Theorem \ref{thm:duality} to transfer our problem from $L^2(S)$ to $L^2(I)$. We will show that for $E(\Lambda^*(\Gamma, S))$ to be a Riesz basis in $L^2(I)$, it is necessary that $|I| \in p_2(\Gamma)$, and by Corollary \ref{thm:duality} this will imply Theorem \ref{thm2}. 
As what we need is a necessary, and not a sufficient, condition for $E(\Lambda^*(\Gamma, S))$ to be a Riesz basis in $L^2(I)$, we cannot use Avdonin's theorem. Instead we will use the following consequence of Pavlov's complete characterization of the exponential Riesz bases in $L^2(I)$ \cite{pavlov}.
\begin{theorem}[See {\cite[Theorem 8, p.\ 240]{hruscev-nikolskii-pavlov}}]
\label{thm:pavlov}
Let $\Lambda \subset \br$ be a discrete set. Then for the exponential system $E(\Lambda)$ to be a Riesz basis in $L^2(0,a)$, $a>0$, it is necessary that the function $f(x) = n_{\Lambda}(x) -ax$ belongs to $\bmo(\br)$.
\end{theorem}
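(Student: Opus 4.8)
The result is the ``necessity'' half of Pavlov's complete description of exponential Riesz bases on an interval, and the plan is to follow the standard chain of implications connecting the Riesz basis property to a Muckenhoupt $(A_2)$ condition, then to $\bmo$, and finally to the counting function. First I would reduce to a complete interpolating sequence: by the Fourier transform and Parseval, $E(\Lambda)$ is a Riesz basis in $L^2(0,a)$ if and only if $\Lambda$ is a complete interpolating sequence for the Paley--Wiener space $PW$ of entire functions of exponential type $\pi a$ that are square integrable on $\br$ (after translating the interval to the symmetric one $(-a/2,a/2)$, which only shifts $n_\Lambda(x)-ax$ by a constant and hence does not affect membership in $\bmo(\br)$). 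The deep input, which is Pavlov's theorem itself, is that such a sequence is exactly the zero set of a \emph{generating function} $G$: an entire function of exponential type $\pi a$, nonvanishing in the open upper half-plane, whose zeros are $\Lambda$ and whose boundary weight $w(x)=|G(x)|^2$ satisfies the Muckenhoupt condition
\[
(A_2):\quad \sup_{J}\Big(\frac{1}{|J|}\int_J w\Big)\Big(\frac{1}{|J|}\int_J w^{-1}\Big)<\infty,
\]
the supremum being taken over all bounded intervals $J\subset\br$.

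From here the argument is analytic. The classical fact that $(A_2)$ weights have logarithms of bounded mean oscillation gives at once $\log w = 2\log|G|\in\bmo(\br)$, hence $\log|G|\in\bmo(\br)$. To pass to the counting function I would use harmonic conjugation. Since $G$ has no zeros in the open upper half-plane, $\log G(z)$ is analytic there, and the sine-type indicator of $G$ yields $\log G(z)=-i\pi a\,z+O(1)$ as $\operatorname{Im}z\to+\infty$; thus $\Phi(z):=\log G(z)+i\pi a\,z$ is analytic in the upper half-plane with boundary real part $\operatorname{Re}\Phi(x)=\log|G(x)|$. Placing $\Phi$ in the appropriate Smirnov/Nevanlinna class, its boundary values satisfy the conjugation identity
\[
\arg G(x)+\pi a\,x = H\big[\log|G|\big](x)+\const,
\]
where $H$ denotes the Hilbert transform. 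On the other hand the argument principle shows that, as $x$ increases, $\arg G(x)$ (taken as the continuous boundary value from the upper half-plane) drops by exactly $\pi$ at each simple zero $\lambda\in\Lambda$, so that
\[
\arg G(x) = -\pi\,n_\Lambda(x)+\rho(x),
\]
with $\rho$ the continuous ``inter-zero'' phase.

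Combining the last two displays gives
\[
\pi\big(n_\Lambda(x)-a\,x\big)=\rho(x)-H\big[\log|G|\big](x)-\const .
\]
Because the Hilbert transform maps $\bmo(\br)$ into itself modulo constants, the term $H[\log|G|]$ lies in $\bmo(\br)$, and the proof is completed once $\rho$ is shown to be bounded (bounded $\rho$ suffices, though $\rho\in\bmo$ would do as well). The boundedness of $\rho$ is precisely the regularity furnished by $G$ being of sine type: the zeros are uniformly separated with gaps bounded above, and $|G(x\pm iy_0)|$ is bounded above and below for a fixed $y_0>0$, which pins down the phase between consecutive zeros up to a bounded error. I expect the two genuinely technical points to be (i) justifying the boundary conjugation identity for $\Phi$ rigorously, i.e.\ placing $\Phi$ in the correct class so that $\operatorname{Im}\Phi=H[\operatorname{Re}\Phi]$ holds up to an additive constant despite the logarithmic singularities of $\log|G|$ at the points of $\Lambda$, and (ii) the control of the continuous phase remainder $\rho$, which is where the sine-type structure of the generating function is essential. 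Everything else is the standard $(A_2)\Rightarrow\bmo$ theory together with the boundedness of $H$ on $\bmo(\br)$.
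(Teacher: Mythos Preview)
The paper does not prove this theorem at all: it is quoted as a known result from \cite{hruscev-nikolskii-pavlov} and used as a black box in the proof of Theorem~\ref{thm2}. So there is no ``paper's own proof'' to compare against; your proposal is a sketch of the argument that the cited reference carries out.

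Your outline is broadly the right one --- Pavlov's $(A_2)$ characterisation, then $\log|G|\in\bmo$, then harmonic conjugation to reach the phase, then the argument principle to identify the phase with $-\pi n_\Lambda(x)$ --- but two points deserve correction. First, you repeatedly call the generating function $G$ ``of sine type'' and use this to control the remainder $\rho$. That is too strong: Pavlov's theorem guarantees only that $|G|^2\in(A_2)$, which is strictly weaker than sine type, and in fact there are complete interpolating sequences whose generating function is not of sine type. Fortunately you do not need sine type. If $\Lambda\subset\br$ and $G$ is taken to be the canonical product (real on the real axis, simple real zeros at $\Lambda$), then between consecutive zeros $G$ is real and nonvanishing, so the boundary phase is \emph{piecewise constant} and jumps by exactly $\pi$ at each zero; hence $\arg G(x)=-\pi n_\Lambda(x)+\const$ with no continuous remainder at all. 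Second, the step ``$\operatorname{Im}\Phi = H[\operatorname{Re}\Phi]$ up to a constant'' requires justification beyond saying $\Phi$ is analytic in the upper half-plane: one must check that $\Phi$ (equivalently $\log G + i\pi a z$) lies in a class where the boundary Hilbert transform identity holds, which is where the Cartwright-class structure of $G$ and the $(A_2)$ bound on $|G|$ enter. You flag this yourself as point (i), and it is indeed the substantive analytic step; the rest is routine once that is in place.
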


\subsection{Proof of Theorem \ref{thm2}}
By Lemma \ref{lem:thm2spec}, it will be enough to consider the case when $\Gamma$, $\Gamma^*$ are lattices of the special form \eqref{eq:gam}, \eqref{eq:gamdual}. Since the boundary of the set $S$ has measure zero, there exists a translate of $S$ whose boundary does not intersect the countable set $p_1(\Gamma^*)$. Translating $S$ does not affect the Riesz basis property for the set, so we may assume below that $\partial S \cap p_1(\Gamma^*) = \emptyset$.

Suppose that $E(\Lambda(\Gamma, I))$ is a Riesz basis in $L^2(S)$. By applying Corollary \ref{cor:duality} with the lattice
\begin{equation*}
\Gamma' := \left\{ (p_2(\gamma^*), p_1(\gamma^*)) \, : \, \gamma^* \in \Gamma^* \right\} \subset \br \times \br^d
\end{equation*}
and with $U=I$ and $V=S$, it follows that $E(\Lambda^*(\Gamma, S))$ is a Riesz basis in $L^2(I)$, with $\Lambda^*(\Gamma, S)$ given by \eqref{eq:lambdastar}.

Denote by $d(\Lambda^*, x)$ the discrepancy function for $\Lambda^*=\Lambda^*(\Gamma, S)$. Since $E(\Lambda^*)$ is a Riesz basis in $L^2(I)$, it follows from Landau's necessary density conditions that $\D(\Lambda^*) = \mes S = |I|$. Thus, by Theorem \ref{thm:pavlov} we have $d(\Lambda^*,x) \in \bmo(\br)$, and from Lemma \ref{lem:bmorel} it follows that the sequence $\{D_n(S)\}$ in \eqref{eq:d_ns} belongs to $\bmo(\bz)$. We now need the following result to complete the proof.
\begin{theorem}[\cite{kozmalev}]
\label{thm:bmomes}
Let $\alpha \in \br^d$ be an irrational vector, and $S \subset \br^d$ be a Riemann measurable set. 
If the sequence $\{ D_n(S)\}_{n \in \bz}$ in \eqref{eq:d_ns} belongs to $\bmo(\bz)$, then the measure of $S$ is of the form 
\begin{equation}
\label{eq:bdsmes}
n_0 + n_1 \alpha_1 + \cdots + n_d \alpha_d , 
\end{equation}
where $n_0, n_1, \ldots , n_d$ are integers. 
\end{theorem}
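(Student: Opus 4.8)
The plan is to argue by contradiction: suppose $\{D_n(S)\}_{n\in\bz}\in\bmo(\bz)$ while $\sigma:=\mes S$ does \emph{not} lie in $\bz+\bz\alpha_1+\cdots+\bz\alpha_d$, and derive a contradiction. Two observations set the stage. First, since $\chi_S$ is integer valued on $\bt^d$ and $D_n(S)=\sum_{k=0}^{n-1}(\chi_S(k\alpha)-\sigma)$ for $n>0$, the number $D_n(S)$ differs from $-n\sigma$ by an integer; equivalently its fractional part equals $\{-n\sigma\}$, so the arithmetic of the conclusion is governed by the orbit-closure of $(\alpha,\sigma)$ on $\bt^{d+1}$, and the hypothesis on $\sigma$ is exactly the statement that no character $(\mu,\pm1)$, $\mu\in\bz^d$, of $\bt^{d+1}$ annihilates $(\alpha,\sigma)$. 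Second, because $\sigma$ is not of the form \eqref{eq:bdsmes}, $S$ cannot be a bounded remainder set (by \cite[Proposition~2.4]{gr-lev-brs}), so $\{D_n(S)\}$ is \emph{unbounded}; and the John--Nirenberg inequality for $\bmo(\bz)$ upgrades the hypothesis to a uniform $L^2$ mean-oscillation bound
\begin{equation*}
\frac{1}{m-n}\sum_{k=n+1}^{m}\Big|D_k(S)-\frac{1}{m-n}\sum_{j=n+1}^{m}D_j(S)\Big|^2\le C \qquad(n<m).
\end{equation*}

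Next I would bring in harmonic analysis on $\bt^d$. Since $S$ is Riemann measurable, $\chi_S$ can be sandwiched between smooth functions with absolutely convergent Fourier series (the smoothing device of \sect \ref{sect:dual}), and a limiting argument gives
\begin{equation*}
D_n(S)=\sum_{0\ne\nu\in\bz^d}\widehat{\chi_S}(\nu)\,\frac{e^{2\pi i n\dotprod{\nu}{\alpha}}-1}{e^{2\pi i \dotprod{\nu}{\alpha}}-1}.
\end{equation*}
The essential tension is now visible: the right-hand side involves only the coefficients $\widehat{\chi_S}(\nu)$ with $\nu\ne0$, yet the desired conclusion concerns $\widehat{\chi_S}(0)=\sigma$. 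The bridge is again the integrality of $\chi_S$: via Parseval it ties $\sigma$ to the high-frequency coefficients (for instance $\sum_{0\ne\nu}|\widehat{\chi_S}(\nu)|^2=\sigma-\sigma^2$ when $S$ lies in a fundamental domain), so whatever quantitative control on the $\nu\ne0$ part one extracts from the displayed bound must remain consistent with this constraint.

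The heart of the matter --- and the step I expect to be the main obstacle --- is to convert the failure $\sigma\notin\bz+\bz\alpha_1+\cdots+\bz\alpha_d$ into a violation of the $L^2$ mean-oscillation bound, by exhibiting \emph{resonant blocks} $[n_j,m_j)$ on which $\{D_k(S)\}$ oscillates without bound. One uses that $\{(k\alpha,k\sigma)\}_k$ equidistributes in its orbit-closure $H\subset\bt^{d+1}$, and that under our assumption the fibres of the projection $H\to\bt^d$ are never single points. Concretely: since $\{D_n(S)\}$ is unbounded there are return times $m$ with $m\alpha$ extremely close to $0$ modulo $\bz^d$ and $|D_m(S)|$ large; the cocycle identity
\begin{equation*}
D_{m+n}(S)=D_m(S)+D_n(S-m\alpha)
\end{equation*}
together with the Riemann measurability of $S$ (so that, over a suitable range of $n$, the discrepancies of $S$ and of its small translate $S-m\alpha$ track each other) forces $\{D_k(S)\}$ to take values spread apart by roughly $|D_m(S)|$ over a long block; composing several such return times amplifies the spread and contradicts the John--Nirenberg bound. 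The real work is to make this quantitative --- to control how the behaviour of $S$ near its boundary interacts with the chosen return times --- and to treat the borderline case $\sigma\in(\mathbb{Q}+\mathbb{Q}\alpha_1+\cdots+\mathbb{Q}\alpha_d)\setminus(\bz+\bz\alpha_1+\cdots+\bz\alpha_d)$, where $H$ is a proper subtorus covering $\bt^d$ with $q\ge2$ sheets, by a one-dimensional Ostrowski-type analysis of the induced return map.
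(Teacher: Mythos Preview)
Your proposal sketches a plausible dynamical strategy, but it leaves the decisive step entirely open: you yourself flag the ``resonant blocks'' construction as the main obstacle and never carry it out. Producing blocks on which the mean oscillation blows up requires precise control of how the discrepancy of $S$ and of its small translate $S-m\alpha$ differ over long ranges of $n$, and Riemann measurability alone does not give you a quantitative bound of this sort uniformly in the return times $m$; the boundary effects you mention are exactly the difficulty, and nothing in your outline explains how to overcome them. The Fourier expansion you write down and the Parseval identity linking $\sigma$ to the nonzero coefficients are both correct observations, but neither is used in the argument that follows, so they do not help close the gap.

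The paper's route is entirely different and bypasses all of this. Rather than contradicting $\bmo$ directly, one shows that $\{D_n(S)\}\in\bmo(\bz)$ forces the existence of a measurable (in fact $L^2$) solution $g$ to the cohomological equation $\chi_S-\mes S=g(\cdot+\alpha)-g$ on $\bt^d$; this implication is the content of \cite[Theorem 4.3]{kozmalev}, whose proof carries over verbatim to $d$ dimensions. Once such a $g$ exists, the conclusion follows from the eigenfunction argument behind \cite[Proposition 2.4]{gr-lev-brs}: the function $e^{2\pi i g}$ is a measurable eigenfunction of the rotation by $\alpha$ with eigenvalue $e^{-2\pi i\,\mes S}$, and ergodicity forces $\mes S\in\bz+\bz\alpha_1+\cdots+\bz\alpha_d$. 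Only the measurability of $g$ is needed here, not boundedness. This two-step reduction is short and avoids any delicate block construction; your integrality observation $\{D_n(S)\}=\{-n\sigma\}$ is in fact exactly what makes the eigenfunction step work, but you are deploying it in the wrong place.
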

This result was proved in \cite[Section 4]{kozmalev} in the one-dimensional case. 
The proof in higher dimensions is along the same line. Indeed, consider the function
\begin{equation*}
 f(x) = \chi_S(x)-\mes S,
\end{equation*}
which is a Riemann integrable function on $\bt^d$. By assumption, the ergodic sums
\begin{equation*}
 S_n(x) := \sum_{k=0}^{n-1} f(x+k\alpha)
\end{equation*}
satisfy the condition $\{S_n(0)\} \in \bmo$. Hence, as in the proof of \cite[Theorem 4.3]{kozmalev},
 it follows that there is a real-valued function $g \in L^2(\bt^d)$ such that 
$f(x) = g(x+\alpha)-g(x)$ almost everywhere. In turn, the
proof of \cite[Proposition 2.4]{gr-lev-brs} implies that $\mes S$ is of the form \eqref{eq:bdsmes}
(notice that in the latter proof the function $g$ was bounded, but this fact was not used
in the proof -- only the measurability of $g$ is important).

Finally, we observe that when $\Gamma$ is given by \eqref{eq:gam}, then $p_2(\Gamma)$ is precisely the collection of real numbers of the form \eqref{eq:bdsmes}. As $|I|=\mes S$, we thus get $|I| \in p_2(\Gamma)$, and this completes the proof of Theorem \ref{thm2}.


\section{The periodic setting}
\label{sec:periodic}

\subsection{}
There is also a version of the problem in the periodic setting, where $S$ is a Riemann measurable subset of $\bt^d$, and the simple quasicrystal is a subset of $\bz^d$.

Let $\alpha \in \br^d$ be a vector such that the numbers $1, \alpha_1, \alpha_2, \ldots , \alpha_d$ are linearly independent over the rationals, and let $I$ be a semi-closed interval on the circle $\bt=\br/\bz$. Then the set  
\begin{equation}
\label{eq:pqc}
\Lam (\alpha, I) := \{ n \in \bz^d \, : \, \ip{n}{\alpha} \in I \} 
\end{equation}
is called a \emph{simple quasicrystal} in $\bz^d$. 

One can check that $\Lambda (\alpha, I)$ has uniform density $\D(\Lam (\alpha, I)) = |I|$. 

The result analogous to Theorem~M  in this setting is the following \cite{meyer2}:

\smallskip

\emph{
Let $\Lambda$ be a simple quasicrystal defined by \eqref{eq:pqc}. Then:
\begin{enumerate-math}
\item
$E(\Lambda)$ is a frame in $L^2(S)$ for any compact set $S \subset \bt^d$ with
$\mes S < |I|$;
\item
$E(\Lambda)$ is a Riesz sequence in $L^2(S)$ for any open set $S \subset \bt^d$ with
$\mes S > |I|$.
\end{enumerate-math}}

\subsection{}
For Riesz bases we have the following versions of Theorems \ref{thm2} and \ref{thm1}.
\begin{theorem}
\label{pthm2}
Let $\Lambda$ be a simple quasicrystal defined by \eqref{eq:pqc}, and suppose that $|I|$ is not of the form 
\begin{equation*}
n_0 + n_1 \alpha_1 + \cdots + n_d \alpha_d ,
\end{equation*}
where $n_0, \ldots , n_d$ are integers. Then there is no Riemann measurable set $S \subset \bt^d$ such that $E(\Lambda)$ is a Riesz basis in $L^2(S)$.
\end{theorem}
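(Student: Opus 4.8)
The plan is to follow the proof of Theorem~\ref{thm2} almost word for word; the only genuinely new ingredient is a periodic analogue of the duality principle.

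First I would record this periodic duality. Let $\Gamma = \{(n, \dotprod{n}{\alpha}) : n \in \bz^d\}$, a discrete cocompact subgroup of $\bz^d \times \bt$ whose two coordinate projections are injective with dense image --- injectivity of the projection onto $\bt$ being exactly the hypothesis that the numbers $1, \alpha_1, \dots, \alpha_d$ are linearly independent over the rationals. Then $\Lambda(\alpha, I)$ is the model set cut out of $\bz^d$ by the window $I \subset \bt$. The dual object $\Gamma^*$ (the annihilator of $\Gamma$ in the pairing between $\bz^d \times \bt$ and $\bt^d \times \bz$) is $\Gamma^* = \{(j\alpha, j) : j \in \bz\} \subset \bt^d \times \bz$, and the dual model set attached to a set $S \subset \bt^d$ is
\begin{equation*}
\Lambda^* \;=\; \{\, j \in \bz : j\alpha \in S \,\} \;\subset\; \bz ,
\end{equation*}
where $j\alpha$ is read modulo $\bz^d$ (its orientation plays no role in what follows). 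I would then verify that the proof of Theorem~\ref{thm:duality} carries over verbatim with $\br^n$ replaced by the discrete group $\bz^d$ and $\br^m$ by the torus $\bt$: the mollifier $\varphi_\varepsilon$, Lemmas~\ref{lemma:dualequidist} and \ref{lemma:dualsupport}, and Poisson summation all retain their meaning in this setting, and the equidistribution input \eqref{eq:equidist} is precisely the standing hypothesis on $\alpha$. Granting this, and after translating $S$ so that $\partial S$ avoids the countable orbit $\{j\alpha : j \in \bz\}$ --- a translation that does not affect the Riesz basis property of $E(\Lambda(\alpha, I))$ in $L^2(S)$ --- the assumption that $E(\Lambda(\alpha, I))$ is a Riesz basis in $L^2(S)$ will give, via the analogue of Corollary~\ref{cor:duality}, that $E(\Lambda^*)$ is a Riesz basis in $L^2(I)$; here $I$ is regarded as an interval of length $|I|$.

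The remainder is a transcription of the proof of Theorem~\ref{thm2}. The set $\Lambda^* \subset \bz$ is automatically separated. Since the sequence $\{j\alpha\}$ is uniformly equidistributed on $\bt^d$ and $S$ is Riemann measurable, $\Lambda^*$ has uniform density $\D(\Lambda^*) = \mes S$ (see \cite{kuipers}); on the other hand, Landau's necessary density condition applied on the interval $I$ gives $\D(\Lambda^*) = |I|$, whence $\mes S = |I|$. The counting function of $\Lambda^*$ satisfies $n_{\Lambda^*}(N) - n_{\Lambda^*}(0) = \sum_{k=0}^{N-1}\chi_S(k\alpha)$ for $N>0$, together with the analogous identity for $N<0$, so the discrepancy function $d(\Lambda^*, x) = n_{\Lambda^*}(x) - |I|\,x$ differs by a bounded function from the sequence $\{D_n(S)\}$ of \eqref{eq:d_ns}. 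By Pavlov's theorem (Theorem~\ref{thm:pavlov}) the Riesz basis property of $E(\Lambda^*)$ in $L^2(I)$ forces $d(\Lambda^*, x) \in \bmo(\br)$, and arguing as in Lemma~\ref{lem:bmorel} --- even more directly here, since $\Lambda^*$ is already a subset of $\bz$ and so the ``blocks'' in the partition have at most one element --- we obtain $\{D_n(S)\} \in \bmo(\bz)$. Finally Theorem~\ref{thm:bmomes}, applied with the same vector $\alpha$, shows that $\mes S$, and hence $|I|$, is of the form $n_0 + n_1 \alpha_1 + \cdots + n_d \alpha_d$ with integers $n_0, \dots, n_d$. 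This contradicts the hypothesis on $|I|$, so no Riemann measurable set $S \subset \bt^d$ with the stated property can exist.

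The main obstacle is the first step: checking that the periodic version of Theorem~\ref{thm:duality} is genuinely available, i.e.\ that Lemmas~\ref{lemma:dualequidist} and \ref{lemma:dualsupport} and the deduction of the duality from them survive the passage to the pair of groups $(\bz^d, \bt)$, with due attention to the boundary condition $\partial S \cap \{j\alpha : j \in \bz\} = \emptyset$ and to the identification of an arc of $\bt$ with an interval. Everything past that is an essentially unchanged copy of the argument in Section~\ref{sec:proof-thm-neg}.
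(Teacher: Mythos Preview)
Your proposal is correct and follows essentially the same route as the paper. The paper itself only sketches the argument: it records the periodic duality as Lemma~\ref{lem:pduality} (with the dual quasicrystal written as $\Lambda^*(\alpha,S)=\{m\in\bz:-m\alpha\in S\}$, which as you note differs from your $\{j:j\alpha\in S\}$ only by a harmless reflection), then says that Pavlov's theorem and the $\bmo$ argument of Section~\ref{sec:proof-thm-neg} apply as before, referring to \cite{kozmalev} for the one-dimensional details. Your write-up is in fact more explicit than the paper's, and your observation that Lemma~\ref{lem:bmorel} becomes trivial here---because $\Lambda^*\subset\bz$ already, so the ``blocks'' have at most one element and $d(\Lambda^*,n)$ coincides with $D_n(S)$ up to a bounded function---is a genuine simplification over the non-periodic case.
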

\begin{theorem}
\label{pthm1}
Let $\Lambda$ be a simple quasicrystal defined by \eqref{eq:pqc}, and suppose that 
\begin{equation*}
|I| = n_0 + n_1 \alpha_1 + \cdots + n_d \alpha_d 
\end{equation*}
for certain integers $n_0, \ldots , n_d$. Then $E(\Lambda)$ is a Riesz basis in $L^2(S)$ for every Riemann measurable bounded remainder set $S \subset \bt^d$ with $\mes S = |I|$.
\end{theorem}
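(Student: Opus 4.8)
The plan is to transcribe the proof of Theorem~\ref{thm3} to the periodic setting, the only genuinely new ingredient being a periodic version of the Matei--Meyer duality of \sect\ref{sect:dual}. Observe first that the hypothesis on $|I|$ serves only to make the family of admissible sets $S$ nonempty: a Riemann measurable bounded remainder set in $\bt^d$ automatically has measure of the form $n_0+n_1\alpha_1+\cdots+n_d\alpha_d$. Observe also that the case $|I|=1$ is trivial, since then $I$ is the whole circle, $\Lambda=\Lambda(\alpha,I)=\bz^d$, and $\mes S=1$ forces $S=\bt^d$ up to a null set, so $E(\Lambda)$ is just the standard orthonormal basis of $L^2(\bt^d)$. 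Hence I assume $|I|<1$ and regard $I$ as an interval of length $|I|$ on the line.

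First I would pass to the dual model set: to the periodic quasicrystal $\Lambda(\alpha,I)\subset\bz^d$ with window $S\subset\bt^d$ one associates the one-dimensional model set
\[
\Lambda^{*}=\Lambda^{*}(\alpha,S):=\{\,k\in\bz:\{k\alpha\}\in S\,\}\subset\bz\subset\br ,
\]
which, by equidistribution of $\{k\alpha\}$ on $\bt^d$, has uniform density $\D(\Lambda^{*})=\mes S=|I|$. The periodic duality principle --- the analogue of Corollary~\ref{cor:duality} --- should assert that if $E(\Lambda^{*})$ is a Riesz basis in $L^2(I)$ then $E(\Lambda(\alpha,I))$ is a Riesz basis in $L^2(S)$; granting this, it remains only to prove that $E(\Lambda^{*})$ is a Riesz basis in $L^2(I)$.

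For the latter I would invoke Avdonin's theorem (Theorem~\ref{thm:avdonin}), exactly as in the proof of Theorem~\ref{thm3}. Since $S$ is a Riemann measurable bounded remainder set, Theorem~\ref{thm:cohom} supplies a bounded, Riemann integrable transfer function $g$ on $\bt^d$ with $\chi_S(x)-\mes S=g(x+\alpha)-g(x)$ a.e.; after replacing $S$ by a suitable translate --- which changes neither the Riesz basis property, nor Riemann measurability, nor the bounded remainder property --- I may assume \eqref{eq:cohom-zero}, and hence \eqref{eq:posnegn}. Then condition~\ref{item:separation} of Theorem~\ref{thm:avdonin} holds because $\Lambda^{*}\subset\bz$ is uniformly discrete; the periodic analogue of Lemma~\ref{lem:bddist} --- immediate here from \eqref{eq:posnegn}, since the blocks in the partition of $\Lambda^{*}$ contain at most one point each --- yields an enumeration $\Lambda^{*}=\{\lambda_j\}_{j\in\bz}$ for which $\sup_j|\delta_j|<\infty$, where $\delta_j:=\lambda_j-j/|I|$, giving condition~\ref{item:boundedness}; and running the computation of Lemma~\ref{lem:avgdev} with this $g$ (the term $\phi$ there now vanishes identically, since $\lambda_j=n$ on its block) produces a constant $c$ with $\sup_k\bigl|\tfrac1N\sum_{j=k+1}^{k+N}\delta_j-c\bigr|\to0$ as $N\to\infty$, so that condition~\ref{item:cancellation} holds for $N$ large. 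By Theorem~\ref{thm:avdonin}, $E(\Lambda^{*})$ is a Riesz basis in $L^2(I)$, and the periodic duality then completes the proof.

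The main obstacle is the first step, the periodic duality, for Theorem~\ref{thm:duality} cannot be quoted verbatim: the lattice naturally describing a periodic quasicrystal, $\{(n,\langle n,\alpha\rangle+m):n\in\bz^d,\,m\in\bz\}\subset\br^d\times\br$, is degenerate --- $p_1$ is neither injective on it nor has dense image. The fix is to work in the group $\br^d\times\bt$, in which $\Gamma=\{(n,\langle n,\alpha\rangle):n\in\bz^d\}$ is a genuine cocompact lattice with dual $\Gamma^{*}=\{(m-k\alpha,k):m\in\bz^d,\,k\in\bz\}$, and to re-establish the two lemmas of \sect\ref{sec:phidef}: the mollifier $\varphi_\varepsilon$ is now a bump on $\bt$ supported in a small interval on the appropriate side of $\partial I$, Poisson summation is applied over $\Gamma$ and $\Gamma^{*}$ in $\br^d\times\bt$, and the $\varepsilon\to0$ limit is evaluated using the equidistribution of $\{n\alpha\}$ on $\bt^d$ and \eqref{eq:equidist}. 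With the analogues of Lemmas~\ref{lemma:dualequidist} and~\ref{lemma:dualsupport} in hand, the proof of the periodic duality is word for word that of Theorem~\ref{thm:duality}, and every remaining ingredient transcribes directly from \sect\ref{sec:models-and-brs} and \sect\ref{sec:proof-thm-pos}.
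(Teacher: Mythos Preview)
Your proposal is correct and follows the same approach as the paper's (deliberately brief) sketch: invoke the periodic duality (Lemma~\ref{lem:pduality}) to reduce to showing that $E(\Lambda^{*}(\alpha,S))$ is a Riesz basis in $L^2(I)$, then verify Avdonin's conditions via the bounded remainder property exactly as in the proof of Theorem~\ref{thm3}. Your discussion of how to set up the periodic duality in $\br^d\times\bt$ in fact supplies detail the paper explicitly omits; the only discrepancy is a harmless sign in the definition of $\Lambda^{*}$ (the paper takes $\{m\in\bz:-m\alpha\in S\}$, which is what your own lattice $\Gamma^{*}$ actually produces).
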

As before, when we say that $S$ is a bounded remainder set, we mean with respect to the vector $\alpha$.

\subsection{}
As in the non-periodic case, there is a duality connecting the frame and Riesz sequence properties of $E(\Lambda(\alpha, I))$ to those of the ``dual'' quasicrystal in $\bz$ defined by
\begin{equation}
\label{eq:pdualqc}
\Lam^*(\alpha, S) := \{ m \in \bz \, : \, -m \alpha \in S \}.
\end{equation} 
This duality can be stated in a form similar to Theorem \ref{thm:duality}. By combining its two parts we obtain the following analog of Corollary \ref{cor:duality}.
\begin{samepage}
\begin{lemma}
\label{lem:pduality} 
\quad 
\begin{enumerate-math}
\item If the exponential system $E(\Lam^*(\alpha, S))$ is a Riesz basis in $L^2(I)$, then $E(\Lam (\alpha, I))$ is a Riesz basis in $L^2(S)$.
\item Suppose that the boundary of $S$ does not intersect the set $\bz \alpha$. If $E(\Lam(\alpha, I))$ is a Riesz basis in $L^2(S)$, then $E(\Lam^*(\alpha, S))$ is a Riesz basis in $L^2(I)$.
\end{enumerate-math}
\end{lemma}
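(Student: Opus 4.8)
The plan is to derive Lemma~\ref{lem:pduality} from a periodic counterpart of Theorem~\ref{thm:duality}, in exactly the way Corollary~\ref{cor:duality} was derived from Theorem~\ref{thm:duality}. Writing $\Lambda := \Lam(\alpha,I)\subset\bz^d$ and $\Lambda^* := \Lam^*(\alpha,S)\subset\bz$, I will establish four implications: (a) if $E(\Lambda^*)$ is a frame in $L^2(I)$ then $E(\Lambda)$ is a Riesz sequence in $L^2(S)$; (b) if $E(\Lambda^*)$ is a Riesz sequence in $L^2(I)$ then $E(\Lambda)$ is a frame in $L^2(S)$; (a$'$) if $E(\Lambda)$ is a frame in $L^2(S)$ then $E(\Lambda^*)$ is a Riesz sequence in $L^2(I)$; (b$'$) if $E(\Lambda)$ is a Riesz sequence in $L^2(S)$ then $E(\Lambda^*)$ is a frame in $L^2(I)$. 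Part (i) of the lemma is (a) and (b) taken together, and part (ii) is (a$'$) and (b$'$) taken together. The hypothesis $\partial S\cap\bz\alpha=\emptyset$ will enter only in (a$'$) and (b$'$) — the implications in which $S$, rather than the semi-closed interval $I$, plays the role of the ``window'' — which is why it is absent from part (i); this is the exact analogue of the fact that Lemma~\ref{lemma:dualsupport} needs no boundary hypothesis when the window is a semi-closed interval.

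For each of the four implications the argument mirrors the proof of Theorem~\ref{thm:duality} in \sect\ref{sect:dual}. One fixes mollifiers $\varphi_\varepsilon(x)=\varepsilon^{-1/2}\varphi(x/\varepsilon)$ on $\br$, with $\varphi$ smooth, nonnegative, $\norm{\varphi}_{L^2}=1$ and supported in $(0,1)$ or $(-1,0)$ according as $I=[a,b)$ or $I=(a,b]$, and $\psi_\varepsilon(x)=\varepsilon^{-d/2}\psi(x/\varepsilon)$ on $\br^d$, with $\psi$ smooth, nonnegative, $\norm{\psi}_{L^2}=1$ and supported in the unit ball; for small $\varepsilon$ these descend to functions on $\bt$ and on $\bt^d$. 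The two inputs needed are the periodic analogues of Lemmas~\ref{lemma:dualequidist} and~\ref{lemma:dualsupport}: first, for a Riemann integrable $f$ on a Riemann measurable $S\subset\bt^d$, $\sum_{m\in\bz,\,-m\alpha\in S}|f(-m\alpha)\,\widehat{\varphi}_\varepsilon(m)|^2\to\int_S|f|^2$ as $\varepsilon\to0$ (and the symmetric statement with $\psi_\varepsilon$, $\bz^d$ and an interval $I\subset\bt$ in place of $\varphi_\varepsilon$, $\bz$ and $S$); second, for $\{c(n)\}\in\ell^1$, $\int_I|\sum_n c(n)\varphi_\varepsilon(t-\langle n,\alpha\rangle)|^2\,dt\to\sum_{n:\,\langle n,\alpha\rangle\in I}|c(n)|^2$ (and its symmetric form with $\psi_\varepsilon$ on $\bt^d$, the latter requiring $\partial S\cap\bz\alpha=\emptyset$). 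Granting these, each implication is proved by forming the appropriate trigonometric sum — e.g. $G_\varepsilon(t)=\sum_{n\in\Lambda}c(n)\varphi_\varepsilon(t-\langle n,\alpha\rangle)$ (reduced modulo $1$) for a Riesz-sequence conclusion, and $F_\varepsilon(t)=\sum_{m\in\Lambda^*}f(-m\alpha)\widehat{\varphi}_\varepsilon(m)e^{2\pi imt}$ for a frame conclusion — rewriting it in ``dual'' form by Poisson summation over $\bz$ (or $\bz^d$), invoking the frame or Riesz-sequence hypothesis, and letting $\varepsilon\to0$. For the two frame implications it suffices, since $S$ (resp. $I$) is Riemann measurable, to test against $f$ smooth and supported in $\inter S$ (resp. $\inter I$): these are dense in the relevant $L^2$ space and have rapidly decaying Fourier coefficients, which supplies the $\ell^1$ hypothesis above.

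The main obstacle is the equidistribution input (the first of the two facts). In \sect\ref{sect:dual} the corresponding sums could be truncated to a bounded range of frequencies because the window there was a bounded subset of Euclidean space; here the bump centres $\{\langle n,\alpha\rangle\bmod 1:n\in\bz^d\}$ and $\{-m\alpha\bmod\bz^d:m\in\bz\}$ are dense, so no such truncation is available. The way around this is to write the weighted sum as $\int_{\bt^d}\phi\,d\mu_\varepsilon$ with $\phi=\chi_S|f|^2$ and $\mu_\varepsilon:=\sum_{m\in\bz}\varepsilon\,|\widehat{\varphi}(\varepsilon m)|^2\,\delta_{-m\alpha}$ (and symmetrically with $\psi$), and to show $\mu_\varepsilon$ converges weakly to Lebesgue measure on $\bt^d$. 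Expanding a smooth test function into Fourier modes and applying Poisson summation, the $k$-th mode contributes $\sum_{j\in\bz}\widehat{(|\widehat{\varphi}|^2)}((\langle k,\alpha\rangle+j)/\varepsilon)$; for $k=0$ this tends to $\widehat{(|\widehat{\varphi}|^2)}(0)=\norm{\varphi}_{L^2}^2=1$, while for $k\neq 0$ it tends to $0$, since $\langle k,\alpha\rangle\notin\bz$ by the standing assumption that $1,\alpha_1,\ldots,\alpha_d$ are linearly independent over the rationals, and $\widehat{\varphi}$ is a Schwartz function. Once $\mu_\varepsilon\to\mathrm{Leb}$ weakly, the equidistribution statement follows for Riemann integrable integrands in the usual way. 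Beyond this, the remaining work is bookkeeping: checking implication by implication precisely where $\partial S\cap\bz\alpha=\emptyset$ is used — exactly when the bump centres lying in $S$ must lie in $\inter S$, so that the $\psi_\varepsilon$-translates are supported in $S$ — and verifying that the two inputs, invoked with continuous test functions (for the equidistribution statements) and with $\ell^1$ coefficients (for the support statements), are each exactly what the argument produces at the relevant step.
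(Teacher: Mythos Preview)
Your proposal is correct and follows precisely the route the paper indicates: the paper does not give a proof of Lemma~\ref{lem:pduality} at all, but simply states that the periodic duality ``can be stated in a form similar to Theorem~\ref{thm:duality}'' and that combining its parts yields the lemma, referring to \cite{kozmalev} for the one-dimensional case. Your sketch is exactly the natural fleshing-out of this remark: you state the four frame/Riesz-sequence implications that constitute the periodic analogue of Theorem~\ref{thm:duality} (in both directions, since in the periodic setting the roles of $(\Lambda, I)$ and $(\Lambda^*, S)$ are symmetric), and you correctly identify that the boundary hypothesis $\partial S\cap\bz\alpha=\emptyset$ enters only in the pair (a$'$), (b$'$), where $S$ plays the role of $V$ in Theorem~\ref{thm:duality}.

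The one place where you add genuine content beyond what the paper sketches is the equidistribution lemma (the periodic analogue of Lemma~\ref{lemma:dualequidist}). Your weak-convergence argument---writing the weighted sum as $\int\phi\,d\mu_\varepsilon$ with $\mu_\varepsilon=\sum_m\varepsilon|\widehat\varphi(\varepsilon m)|^2\delta_{-m\alpha}$, testing against characters, and applying Poisson summation to obtain $\sum_j\widehat{(|\widehat\varphi|^2)}((\langle k,\alpha\rangle+j)/\varepsilon)$---is clean and correct; the rational independence of $1,\alpha_1,\dots,\alpha_d$ is exactly what makes the $k\neq 0$ terms vanish. This is a legitimate point of care in the periodic setting, and your handling of it is fine. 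The remaining steps (the support lemma via $\ell^1$ approximation by finitely supported sequences, and the density of smooth compactly supported test functions for the frame implications) go through just as in \sect\ref{sect:dual}.
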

\end{samepage}
This result allows us to reduce the problem on exponential Riesz bases in $L^2(S)$ to a similar problem in $L^2(I)$ for the interval $I \subset \bt$, and again we can apply the results of Avdonin \cite{avdonin} and Pavlov \cite{pavlov} to verify Theorems \ref{pthm2} and \ref{pthm1}. 
We will not present this in detail. A full proof in the one-dimensional periodic case is given in \cite{kozmalev}.


\section{Remarks. Open problems}
\label{sec:open}
Finally we mention some problems which are left open.

\subsection{}
Suppose that the simple quasicrystal $\Lambda(\Gamma, I)$ satisfies the arithmetical condition $|I| \in p_2(\Gamma)$. Which sets $S$ will then admit $E(\Lambda(\Gamma,I))$ as a Riesz basis? It is enough to restrict our attention to lattices $\Gamma$ of special form \eqref{eq:gam}. We have then seen in Theorem \ref{thm3} that a sufficient condition is that $S$ is a Riemann measurable bounded remainder set with $\mes S = |I|$. Is this condition also necessary?

This question is related to a problem in discrepancy theory. In the proof of Theorem \ref{thm2} we saw that a necessary condition for $E(\Lambda (\Gamma, I))$ to be a Riesz basis in $L^2(S)$ is that the sequence of discrepancies
\begin{equation}
\label{eq:bmoaex}
\left\{ \sum_{k=0}^{n-1} \chi_S (k\alpha) - n \mes S \, : \, n =1,2, 3 \ldots \right\}
\end{equation}
is in $\bmo$. It is an open question whether there exists a set $S$ for which the sequence \eqref{eq:bmoaex} is unbounded, but is in $\bmo$. In the simplest case when $S$ is a single interval in dimension one, the answer to this question is negative \cite{kozmalev}. If the answer is negative also in the general case, then the bounded remainder property not only suffices, but in fact characterizes the Riemann measurable sets $S$ for which $E(\Lambda (\Gamma, I))$ is a Riesz basis in $L^2(S)$.

\subsection{}
In this paper we have studied the Riesz basis property for $E(\Lambda)$ when $\Lambda = \Lambda (\Gamma, I)$ is a simple quasicrystal. The duality in Theorem \ref{thm:duality} allows us to reduce the problem to that of determining when the quasicrystal $\Lambda^* = \Lambda^*(\Gamma, S)$ provides a Riesz basis of exponentials in $L^2(I)$, where $I$ is an interval. This is a problem which is far better understood, and where powerful tools from the theory of entire functions apply.

Recall that the duality in Theorem \ref{thm:duality} is in fact twofold; it says that $E(\Lambda)$ is a frame in $L^2(S)$ if $E(\Lambda^*)$ is a Riesz sequence in $L^2(I)$, and that $E(\Lambda)$ is a Riesz sequence in $L^2(S)$ if $E(\Lambda^*)$ is a frame in $L^2(I)$. Rather than using both statements simultaneously to determine when $E(\Lambda)$ is a Riesz basis in $L^2(S)$ (as we do in Theorems \ref{thm2} and \ref{thm1}), one may apply parts \ref{item:frame} and \ref{item:riesz} of Theorem \ref{thm:duality} separately. 
Seip and Ortega-Cerd\`{a} have given a complete characterization of the exponential systems which constitute a frame, respectively a Riesz sequence, in $L^2(I)$ (see \cite{cerda} and \cite[Theorem 10]{seip}). Combining Theorem \ref{thm:duality} with these characterizations, one may attempt to determine when $E(\Lambda)$ is just a frame, or just a Riesz sequence, in $L^2(S)$ when $\mes S= |I|$. 

\subsection{}
Several other problems are mentioned in \cite{justmeyer}. In particular, do the results
admit a version for $L^p$ norms, with $p \neq 2$? And what can be said on 
the exponential system  $E(\Lambda)$ when $\Lambda$ is a \emph{non-simple} model set?


\end{document}